\pdfoutput=1
\documentclass[12pt]{article}
\usepackage[margin=25mm]{geometry}
\usepackage{indentfirst}
\usepackage{amsfonts,amsmath,amssymb,amsbsy,amsthm}
\usepackage[usenames,dvipsnames,svgnames,table]{xcolor}
\usepackage[T1]{fontenc}
\usepackage{hyperref}
\hypersetup{ 
unicode=true,
colorlinks=true,
linkcolor={black},
citecolor={black},
urlcolor={blue!60!black},
pdftitle={Clustered Coloring of Graphs Excluding a Minor or Odd Minor},
pdfauthor={Chun-Hung Liu and David~R.~Wood}}
\usepackage{enumitem}
\setlist[itemize]{itemsep=0ex}
\setlist[enumerate]{itemsep=0ex}
\setlist[description]{itemsep=0ex}
\usepackage[longnamesfirst,numbers,sort&compress]{natbib}
\makeatletter
\def\NAT@spacechar{~}
\makeatother
\usepackage[noabbrev,capitalise]{cleveref}
\crefname{lem}{Lemma}{Lemmas}
\crefname{thm}{Theorem}{Theorems}
\crefname{prop}{Proposition}{Propositions}
\crefname{conj}{Conjecture}{Conjectures}
\crefname{claim}{Claim}{Claims}
\crefformat{equation}{(#2#1#3)}
\Crefformat{equation}{Equation #2(#1)#3}
\newtheorem{theorem}{Theorem}
\newtheorem{lemma}[theorem]{Lemma}

\newtheorem{corollary}[theorem]{Corollary}
\newtheorem{claim}{Claim}[theorem]
\newcommand{\ngs}[2][]{N_{#1}^{\geq s}(#2)}
\newcommand{\nls}[2][]{N_{#1}^{< s}(#2)}
\newcommand{\vdHW}{van~den~Heuvel and the second author~\cite{vdHW18}}
\newcommand{\VdHW}{Van~den~Heuvel and the second author~\cite{vdHW18}}
\newcommand{\VdHWcite}{Van~den~Heuvel and Wood~\cite{vdHW18}}
\newcommand{\VdHLS}{Van~der~Holst, Lov{\'a}sz and Schrijver~\cite{HLS}}
\makeatletter
\renewcommand\section{\@startsection {section}{1}{\z@}{-3ex \@plus -1ex \@minus -.2ex}{2ex \@plus.2ex}{\normalfont\large\bfseries}}
\renewcommand\subsection{\@startsection{subsection}{2}{\z@}{-2.5ex\@plus -1ex \@minus -.2ex}{1.5ex \@plus .2ex}{\normalfont\normalsize\bfseries}}
\renewcommand\subsubsection{\@startsection{subsubsection}{3}{\z@}{-2ex\@plus -1ex \@minus -.2ex}{1ex \@plus .2ex}{\normalfont\normalsize\bfseries}}
 \renewcommand\paragraph{\@startsection{paragraph}{4}{\z@}{1.5ex \@plus.5ex \@minus.2ex}{-1em}{\normalfont\normalsize\bfseries}}
\renewcommand\subparagraph{\@startsection{subparagraph}{5}{\parindent}  {1.5ex \@plus.5ex \@minus .2ex}  {-1em} {\normalfont\normalsize\bfseries}}
\makeatother
\def\X {{\mathcal X}}
\def\T {{\mathcal T}}
\def\Se {{\mathcal S}}
\def\L {{\mathcal L}}
\def\F {{\mathcal F}}
\def\V {{\mathcal V}}

\newcommand{\ceil}[1]{\lceil{#1}\rceil}

\renewcommand{\geq}{\geqslant}
\renewcommand{\leq}{\leqslant}
\newcommand\abs[1]{\lvert #1\rvert}
\begin{document}

\title{\bf\Large 
Clustered Coloring of Graphs\\ Excluding a Subgraph and a Minor\footnote{This material is based upon work supported by the National Science Foundation under Grant No.\ DMS-1664593, DMS-1929851 and DMS-1954054.}}

\author{Chun-Hung Liu\footnote{Department of Mathematics, Texas A\&M University, Texas, USA, \texttt{chliu@math.tamu.edu}. Partially supported by NSF under Grant No.\ DMS-1664593, DMS-1929851 and DMS-1954054.} \quad 
David R. Wood\footnote{School of Mathematics, Monash University, Melbourne, Australia, \texttt{david.wood@monash.edu}. Research supported by the Australian Research Council.}}

\maketitle

\begin{abstract}
A graph coloring has bounded clustering if each monochromatic component has bounded size. Equivalently, it is a partition of the vertices into induced subgraphs with bounded size components. This paper studies clustered colorings of graphs, where the number of colors depends on an excluded minor and/or an excluded subgraph. We prove the following results (for fixed integers $s,t$ and a fixed graph $H$). First we show that graphs with no $K_{s,t}$ subgraph and with no $H$-minor are $(s+2)$-colorable with bounded clustering. The number of colors here is best possible. This result implies that graphs with no $K_{s+1}$-minor are $(s+2)$-colorable with bounded clustering, which is within two colors of the clustered coloring version of Hadwiger's conjecture. For graphs of bounded treewidth (or equivalently, excluding a planar minor) and with no $K_{s,t}$ subgraph, we prove $(s+1)$-choosability with bounded clustering, which is best possible. We then consider excluding an odd minor. We prove that graphs with no $K_{s,t}$ subgraph and with no odd $H$-minor are $(2s+1)$-colorable with bounded clustering, generalizing a result of the first author and Oum who proved the case $s=1$. Moreover, at least $s-1$ color classes are stable sets. Finally, we consider the clustered coloring version of a conjecture of Gerards and Seymour and prove that graphs with no odd $K_{s+1}$-minor are $(8s-4)$-colorable with bounded clustering, which improves on previous such bounds. 
\end{abstract}

%
\section{Introduction}
\label{Intro}

Hadwiger's conjecture~\citep{Hadwiger43} asserts that every graph with no $K_{s+1}$-minor has a proper $s$-coloring. For $s\leq 2$ the conjecture is easy. \citet{Hadwiger43} and \citet{Dirac52} independently proved the $s=3$ case. \citet{Wagner37} proved that Hadwiger's conjecture with $s=4$ is equivalent to the Four Color Theorem~\citep{RSST97}. And \citet*{RST-Comb93} proved Hadwiger's conjecture for $s=5$. The conjecture remains open for $s\geq6$. Hadwiger's conjecture is widely considered to be one of the most important open problems in graph theory. 
The best known upper bound on the chromatic number of $K_{s+1}$-minor-free graphs is $O(s\log\log s)$ due to \citet{PD21}, improving a recent breakthrough of \citet{NPS20} who improved a long-standing bound independently due to \citet{Kostochka82,Kostochka84} and \citet{Thomason84,Thomason01}. 
Indeed, it is open whether  every graph with no $K_{s+1}$-minor is  $O(s)$-colorable. See the recent survey by \citet{SeymourHC} for more on Hadwiger's conjecture.


One  way to approach Hadwiger's conjecture is to allow improper colorings. Say that a \emph{coloring} of a graph $G$ is simply a function that assigns one color to each vertex of $G$. A \emph{monochromatic component} with respect to a coloring of $G$ is a connected component of the subgraph of $G$ induced by all the vertices assigned a single color. A coloring has \emph{clustering} $\eta$ if every monochromatic component has at most $\eta$ vertices. The \emph{clustered chromatic number} of a graph class $\mathcal{G}$ is the minimum integer $k$ for which there exists an integer $\eta$ such that every graph in $\mathcal{G}$ is $k$-colorable with clustering $\eta$. There have been several recent papers on this topic \citep{Kawa08,KM07,LMST08,HST03,ADOV03,CE19,vdHW18,KO19,EJ14,EO16,DN17,HW19,LO17,MRW17,NSSW19}; see \citep{WoodSurvey} for a survey. 

\citet{KM07} first proved a $O(s)$ upper bound on the clustered chromatic number of $K_{s+1}$-minor-free graphs. The number of colors has since been steadily improved, as shown in \cref{SuccessiveImprovements}, where $\eta(s)$ is some large unspecified function.

\setlength{\tabcolsep}{0pt}
\begin{table}[!h]
\caption{\label{SuccessiveImprovements} Clustered coloring of $K_{s+1}$-minor-free graphs }
\begin{center}
\begin{tabular}{lccc}
\hline
\multicolumn{2}{r}{ number of colors} & \quad clustering & \quad choosability \\
  \hline
\citet{KM07} &  $\ceil{\frac{31}{2}(s+1)}$ &  $\eta(s)$ & yes  \\
\citet{Wood10}\,\footnotemark[1] & $\ceil{\frac{7s+4}{2}}$ & $\eta(s)$ & yes  \\
\citet{EKKOS15} &  $4s$ & $\eta(s)$ & \\
\citet{LO17} & $3s$ & $\eta(s)$ & \\
\citet{Norin15}\,\footnotemark[2] & $2s$ & $\eta(s)$ & \\
\VdHWcite\ & $2s$ & $\lceil{\frac{s-2}{2}}\rceil$ & \\
\citet{DN17} & $2s$ & $\eta(s)$ &  \\
  \hline
\end{tabular}
\end{center}
\end{table}

\footnotetext[1]{This result depended on a result announced by Norine and Thomas~\citep{NT08,Thomas09} which has not yet been written.}
\footnotetext[2]{See~\citep{SeymourHC} for some of the details.}
\setcounter{footnote}{2}

It remains open whether graphs with no $K_{s+1}$ minor are $s$-colorable with bounded clustering\footnote{\citet{DN17} have announced that a forthcoming paper proves that graphs with no $K_{s+1}$ minor are $s$-colorable, in fact $s$-choosable, with bounded clustering.}. Note that $s$ colors would be best possible for any fixed clustering value. That is, for all $s\geq 2$ and $\eta$ there is a graph $G$ with no $K_{s+1}$ minor such that every $(s-1)$-coloring of $G$ has a monochromatic component with more than $\eta$ vertices\footnote{\citet{EKKOS15} 
proved the following stronger lower bound: for all $s\geq 2$ and $c$ there is a graph $G$ with no $K_{s+1}$ minor such that every $(s-1)$-coloring of $G$ has a monochromatic component with maximum degree greater than  $c$. Conversely, \citet{EKKOS15} proved that every graph with no $K_{s+1}$ minor is $s$-colorable such that each monochromatic component has maximum degree $O(s^2\log s)$. This degree bound was improved to $O(s)$ by \vdHW.}. In the following discussion we postpone giving standard definitions until \cref{Definitions}.

\subsection{Main Results}

The current best known bound on the clustered chromatic number of $K_{s+1}$-minor-free graphs is $2s$~\citep{Norin15,DN17,vdHW18}. We prove the following bound, which is within two colors of best possible. 

\begin{theorem}
\label{ClusteredMinor}
For every $s\in\mathbb{N}$, there exists $\eta\in\mathbb{N}$ such that every graph with no $K_{s+1}$-minor is $(s+2)$-colorable with clustering $\eta$.
\end{theorem}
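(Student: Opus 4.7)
The plan is to deduce Theorem~\ref{ClusteredMinor} as a short corollary of the subgraph-plus-minor result advertised in the abstract, which states that for every $s, t \in \mathbb{N}$ and every graph $H$ there exists $\eta = \eta(s, t, H)$ such that every graph with no $K_{s, t}$ subgraph and no $H$-minor is $(s+2)$-colorable with clustering $\eta$. Granted that result, the task reduces to verifying that every $K_{s+1}$-minor-free graph also avoids $K_{s, s+1}$ as a subgraph; the theorem then follows by applying the subgraph-plus-minor result with the given $s$, with $t := s+1$, and with $H := K_{s+1}$.

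To establish the needed implication I would exhibit $K_{s+1}$ directly as a minor of $K_{s, s+1}$. Write the bipartition as $A = \{a_1, \ldots, a_s\}$ and $B = \{b_1, \ldots, b_{s+1}\}$, and define branch sets $V_i := \{a_i, b_i\}$ for $1 \leq i \leq s$ and $V_{s+1} := \{b_{s+1}\}$. Each $V_i$ induces a connected subgraph (an edge for $i \leq s$, a single vertex otherwise), and any two distinct branch sets are joined across the bipartition: for $1 \leq i < j \leq s$ by the edge $a_i b_j$, and for $1 \leq i \leq s$ by the edge $a_i b_{s+1}$. Hence $K_{s+1}$ is a minor of $K_{s, s+1}$, so by monotonicity of the minor relation every $K_{s+1}$-minor-free graph is $K_{s, s+1}$-subgraph-free, and the reduction is complete.

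There is no real obstacle inside Theorem~\ref{ClusteredMinor} itself once the subgraph-plus-minor theorem is in hand; all the difficulty is packed into that master theorem, which is the paper's main technical contribution. It is worth noting why this choice of parameters produces exactly $s+2$ colors rather than $s+3$: one invokes the main theorem at parameter $s$, using the nontrivial $K_{s, s+1}$-freeness proved via the minor embedding above, instead of at parameter $s+1$, where only the automatic $K_{s+1, s+1}$-freeness (from the analogous but cheaper embedding with all branch sets of the form $\{a_i, b_i\}$) would be needed but one extra color would be paid.
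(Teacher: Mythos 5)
Your proof is correct and follows essentially the same route as the paper: deduce \cref{ClusteredMinor} from \cref{minorbasic} by observing that excluding $K_{s+1}$ as a minor forces the exclusion of a suitable complete bipartite subgraph, then apply the master theorem with first parameter $s$ so that the color count is $s+2$. The only cosmetic difference is that the paper uses $K_{s,s}$ (via branch sets $\{a_i,b_i\}$ for $i<s$ together with the two singletons $\{a_s\}$ and $\{b_s\}$, exploiting the edge $a_sb_s$), while you use the slightly weaker but equally sufficient $K_{s,s+1}$; since $t$ does not affect the number of colors in \cref{minorbasic}, both choices deliver the same bound.
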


We in fact prove the following stronger result where the number of colors only depends on an excluded $K_{s,t}$ subgraph. Indeed, the number of colors only depends on $s$. The dependence on $t$ and the excluded minor is hidden in the clustering function. 

\begin{theorem}
\label{minorbasic}
For all $s,t\in\mathbb{N}$ and for every graph $H$, there exists $\eta\in\mathbb{N}$ such that every graph with no  $H$ minor and with no $K_{s,t}$ subgraph is $(s+2)$-colorable with clustering $\eta$. 
\end{theorem}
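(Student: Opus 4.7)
The approach is to reduce to the bounded-treewidth case of the theorem—for which the abstract announces an $(s{+}1)$-choosability result with bounded clustering—spending exactly one extra color on the obstructions produced by the structure theorem for $H$-minor-free graphs.

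First I would invoke the Robertson--Seymour structure theorem: every $H$-minor-free graph $G$ admits a tree decomposition of bounded adhesion whose torsos are $a$-almost embeddable in a surface of Euler genus at most $a$, with at most $a$ apex vertices and $a$ vortices of depth at most $a$, for some $a = a(H)$. Let $A$ denote the union of all apex vertices across all torsos; then $|A \cap B_x| \leq a$ for each bag, while $G - A$ retains the no-$K_{s,t}$-subgraph hypothesis and lies in a structurally simpler class (almost-embeddable without apices), which is known to have bounded layered treewidth. So $G - A$ admits a BFS layering $L_0, L_1, \ldots$ for which each $G[L_i \cup L_{i+1}]$ has treewidth at most some $\ell = \ell(H)$.

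Second, I would color $G - A$ using the palette $\{1, \ldots, s{+}1\}$ via the bounded-treewidth $(s{+}1)$-choosability result, applied to the double-layers $G[L_i \cup L_{i+1}]$ with lists adjusted to avoid colors already used on adjacent vertices in the previously-colored layer. The \emph{choosability} half of the bounded-treewidth result is essential here: it lets us use a common palette across all double-layers while enforcing ``boundary'' constraints from the already-colored layer, so that the global coloring on $G - A$ uses only $s{+}1$ colors and each monochromatic component remains confined to a bounded union of consecutive layers. Finally, $A$ is assigned the single extra color $s{+}2$; since $|A \cap B_x| \leq a$ for every bag and adhesions are bounded, an induction on the height of the tree decomposition controls the monochromatic components in color $s{+}2$ by a function of $a$ and the adhesion bound.

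The main obstacle is the list-coloring step across layers: one must ensure that the ``forbidden'' color set propagated from a previously-colored layer to the next never exhausts the list of size $s{+}1$. Although the no-$K_{s,t}$-subgraph hypothesis does not bound vertex degrees, it does bound the common neighborhoods of any $s$ vertices, and this sparsity must be leveraged—together with the bounded layered treewidth of $G - A$—to keep the effective list size at $s{+}1$ throughout the sweep. I expect this technical balancing, combining choosability, sparsity coming from the excluded bipartite subgraph, and the layered structure of the minor-free skeleton, to be the heart of the argument. Vortices and tree-decomposition adhesions then contribute only constants to the clustering bound, not to the number of colors, so the scheme closes at exactly $s{+}2$ colors.
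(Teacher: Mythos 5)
There is a genuine gap, and it occurs already in the ``one extra color for the apices'' step. You assign colour $s+2$ to $A$, the union of apex vertices over \emph{all} torsos, and claim the resulting monochromatic components are controlled ``by induction on the height of the tree decomposition.'' This does not work: apex vertices from consecutive torsos can lie in a common adhesion set and be adjacent, so $G[A]$ can contain an arbitrarily long induced path. (Take a path-shaped tree decomposition where each torso $X_{t_i}$ has a distinguished apex $u_i$ with $u_i \in X_{t_i}\cap X_{t_{i+1}}$ and $u_{i-1}u_i \in E(G)$; all $u_i$ land in $A$, are coloured $s+2$, and form one component.) The auxiliary claim that $\lvert A\cap B_x\rvert \leq a$ per bag is also false --- a bag $B_x$ can contain apices of many \emph{other} torsos that reach it through adhesion sets --- and even if $G[A]$ had bounded treewidth that would not bound its component sizes. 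Separately, the claim that $G-A$ has bounded layered treewidth is unjustified: it amounts to saying that bounded-adhesion clique-sums of (surface $+$ vortex) pieces have bounded layered treewidth, which is not a theorem and which the relevant tool (\cref{structure minor free}) does not give you. Finally, the layer-by-layer sweep that is supposed to squeeze the colouring of $G-A$ down to $s+1$ colours is flagged by you as ``the heart of the argument'' but not carried out, and the worry you raise (a single vertex could have $\geq s+1$ differently-coloured neighbours in the previous layer, emptying its list) is real and is not resolved by the excluded $K_{s,t}$ subgraph alone.

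The paper avoids all three problems by never invoking the global tree-decomposition form of the structure theorem. It runs an induction on $\lvert V(G)\rvert$ with a bounded precoloured set $Y_1$; when $Y_1$ is large this produces a tangle $\T$ (\cref{coloring or tangle}), and \cref{structure minor free} is applied \emph{locally} relative to $\T$, yielding a \emph{single} apex set $Z$ of bounded size for the whole tangle. The vertices of $Z$ are absorbed into the precoloured set via the $(Z,\ell)$-growth of \cref{Z growth}, the surface-plus-vortex part near the tangle is handled by the bounded-layered-treewidth theorem from the companion paper (\cref{bounded layered tw}, with the $(s,\V)$-compatible lists replacing your ad hoc sweep), and the pieces cut off by the small-order separations of $\T$ are handled by the induction hypothesis. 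No colour class is reserved for apices, and clustering is tracked through $(\eta,g)$-boundedness rather than by height of a tree decomposition. If you want to keep a global-decomposition flavour, you would at minimum need a correct theorem that $G-A$ has bounded layered treewidth, a correct argument bounding components of $G[A]$, and a worked-out replacement for the layer sweep; the tangle-local route in the paper sidesteps all of these at once.
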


\cref{minorbasic} will be proved in \cref{MainProofs}.

Since every graph with no $K_{s+1}$ minor has no $K_{s,s}$ subgraph, \cref{ClusteredMinor} is an immediate corollary of \cref{minorbasic}. This theorem and those in our companion papers \citep{LW1,LW3} are the first known results for clustered coloring where the number of colors depends on an excluded subgraph. While \cref{ClusteredMinor} is of substantial interest, we emphasise that our main results are for graph classes excluding a $K_{s,t}$ subgraph. One motivation for this line of research is that a graph contains no $K_{1,t}$ subgraph if and only if it has maximum degree less than $t$. So \cref{minorbasic} generalizes a result by the first author and Oum \cite{LO17} who proved the $s=1$ case which was originally conjectured by \citet{EJ14}. Also note that excluding a non-forest subgraph alone is not enough to guarantee bounded clustered chromatic number. In particular, for every graph $H$ that contains a cycle, and for all $k,\eta\in\mathbb{N}$, if $G$ is a graph with chromatic number greater than $k\eta$ and girth greater than $|V(H)|$ (which exists \citep{Erdos59}), then $G$ contains no $H$ subgraph and $G$ is not $k$-colorable with clustering $\eta$, for otherwise $G$ would be $k\eta$-colorable.

While \cref{ClusteredMinor} is within two colors of the conjectured answer, we now show that the number of colors in \cref{minorbasic} is best possible. The proof is a variation on the well known ``standard'' example; see \citep{WoodSurvey}. We claim that for all $s,\eta\in\mathbb{N}$ there is a graph $G_s$ with no $K_{s+4}$ minor and with no $K_{s,s+6}$ subgraph, such that every $(s+1)$-coloring of $G_s$ has a monochromatic component on at least $\eta$ vertices. We proceed by induction on $s$. In the base case, $s=1$, let $G_1$ be the $\eta\times\eta$ triangular grid graph. Then $G_1$ has no $K_5$ minor since it is planar, and $G_1$ has no $K_{1,7}$ subgraph since it has maximum degree 6. By the Hex Lemma~\citep{Gale79}, every 2-coloring of $G_1$ has a monochromatic path on $\eta$ vertices, as claimed. Now assume the claim for $G_{s-1}$. Let $G_s$ be obtained from $\eta$ disjoint copies of $G_{s-1}$ by adding a new vertex $v$ adjacent to all other vertices. 
Each component of $G_s-v$ is a copy of $G_{s-1}$. If $G_s$ contains a $K_{s+4}$ minor, then some component of $G_s-v$ contains a $K_{s+3}$ minor, which is a contradiction. Thus $G_s$ contains no $K_{s+4}$ minor. Similarly, if $G_s$ contains a $K_{s,s+6}$ subgraph, then $G_s-v$ contains a $K_{s-1,s+6}$ or $K_{s,s+5}$ subgraph, both of which contain $K_{s-1,(s-1)+6}$, which is a contradiction. Thus $G_s$ contains no $K_{s,s+6}$ subgraph. Now consider an $(s+1)$-coloring of $G_s$. Say $v$ is blue. If every component of $G_s-v$ has a blue vertex, then the blue subgraph contains a star on $\eta+1$ vertices, and we are done. Otherwise, some component $X$ of $G_s-v$ has no blue vertex, and thus has only $s$ colors. By induction, $X$ and hence $G_s$ contains a monochromatic component with at least $\eta$ vertices, as desired.

\subsection{Colin de Verdi\'{e}re Parameter}

The  Colin de Verdi\`ere parameter $\mu(G)$ is an important minor-closed graph invariant introduced by \citet{CdV90,CdV93}; see \citep{HLS,Schrijver97} for surveys. It is known that $\mu(G)\leq 1$ if and only if $G$ is a union of disjoint paths, $\mu(G)\leq 2$ if and only if $G$ is outerplanar, $\mu(G)\leq 3$ if and only if $G$ is planar, and  $\mu(G)\leq 4$ if and only if $G$ is linklessly embeddable. A famous conjecture of \citet{CdV90} asserts that every graph $G$ with $\mu(G)\leq s$ is properly $(s+1)$-colorable. 
This implies the Four Color Theorem, and is implied by Hadwiger's conjecture. It is open whether every graph $G$ with $\mu(G)\leq s$ is $(s+1)$-colorable with bounded clustering. 
Every graph $G$ with $\mu(G)\leq s$ contains no $K_{s+2}$ minor. 
So \cref{ClusteredMinor} implies that such graphs are $(s+3)$-colorable with bounded clustering, but a better bound can be obtained by \cref{minorbasic}. 
\VdHLS\ proved that $\mu(K_{s,t}) = s+1$ for $t\geq\max\{s,3\}$. Thus if $\mu(G)\leq s$ then $G$ contains no $K_{s,t}$ subgraph (since $\mu$ is monotone under taking subgraphs). \cref{minorbasic} then implies:

\begin{corollary}
For every $s\in\mathbb{N}$ there exists $\eta\in\mathbb{N}$, such that every graph $G$ with $\mu(G)\leq s$ is $(s+2)$-colorable with clustering $\eta$.
\end{corollary}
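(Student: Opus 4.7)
The plan is to apply \cref{minorbasic} directly, reducing the hypothesis $\mu(G)\leq s$ to the two structural hypotheses (no fixed minor, no fixed $K_{s,t}$ subgraph) that \cref{minorbasic} requires.

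Concretely, I would set $t:=\max\{s,3\}$ and take the excluded minor to be $H:=K_{s+2}$. The two facts I need are both already recorded in the discussion immediately preceding the corollary. First, every graph $G$ with $\mu(G)\leq s$ contains no $K_{s+2}$-minor, which supplies the ``no $H$-minor'' hypothesis. Second, \VdHLS\ showed that $\mu(K_{s,t})=s+1$ whenever $t\geq\max\{s,3\}$; combined with the monotonicity of $\mu$ under taking subgraphs, this forces any $G$ with $\mu(G)\leq s$ to contain no $K_{s,t}$ subgraph, since otherwise $\mu(G)\geq\mu(K_{s,t})=s+1$. Note that both $t$ and $H$ depend only on $s$.

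Applying \cref{minorbasic} with these choices then produces an integer $\eta=\eta(s)$ such that $G$ is $(s+2)$-colorable with clustering $\eta$, which is exactly the conclusion we want.

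There is essentially no substantive obstacle here beyond \cref{minorbasic} itself: the corollary is a clean translation from the Colin de Verdi\`ere setting to the excluded-subgraph-and-minor setting via two well-known properties of $\mu$. The only point to keep in mind is to pick $t$ at least $3$ so that the identity $\mu(K_{s,t})=s+1$ actually applies in the small-$s$ cases; the choice $t=\max\{s,3\}$ covers this uniformly.
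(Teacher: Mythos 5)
Your proof is correct and is essentially identical to the paper's own argument: both reduce $\mu(G)\leq s$ to the absence of a $K_{s+2}$-minor and of a $K_{s,t}$ subgraph (via the van~der~Holst--Lov\'asz--Schrijver computation $\mu(K_{s,t})=s+1$ for $t\geq\max\{s,3\}$ and subgraph-monotonicity of $\mu$), and then invoke \cref{minorbasic}.
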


This example highlights the utility of excluding a subgraph within a minor-closed class.

\subsection{Bounded Treewidth}

When the excluded minor $H$ is planar (or equivalently, when the graph has bounded treewidth), \cref{minorbasic} is improved as follows. 

\begin{theorem} \label{twbasic}
For all $s,t,w\in\mathbb{N}$ there exists $\eta\in\mathbb{N}$ such that every graph with treewidth at most $w$ and with no $K_{s,t}$ subgraph is $(s+1)$-colorable with clustering $\eta$. 
\end{theorem}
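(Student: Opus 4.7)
The plan is induction on $s$. The base case $s=1$ asserts that graphs of treewidth at most $w$ with maximum degree less than $t$ are $2$-colorable with bounded clustering; this can be established directly from a tree decomposition. Fix a rooted tree decomposition $(T,\{B_x\})$ of width at most $w$, let $a(v)$ be the node of $T$ closest to the root with $v\in B_{a(v)}$, and $2$-color $v$ by the parity of the depth of $a(v)$; bounded bag size together with bounded maximum degree then cap the sizes of monochromatic components by a function of $w$ and $t$.

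For the inductive step, fix $s\geq 2$ and assume the result for $s-1$. Given $G$ of treewidth at most $w$ with no $K_{s,t}$ subgraph, fix a rooted tree decomposition $(T,\{B_x\})$ of width at most $w$ and define $a(v)$ as above. For a threshold $M=M(s,t,w)$ chosen large enough, declare $v$ a \emph{hub} if $v$ has at least $M$ neighbors $u$ with $a(u)$ a strict descendant of $a(v)$ in $T$, and let $Z$ denote the set of hubs. The target structural facts are: (i) $G-Z$ has no $K_{s-1,t'}$ subgraph for some $t'=t'(s,t,w)$; and (ii) the components of $G[Z]$ have bounded size. Granting these, the inductive hypothesis $s$-colors $G-Z$ with bounded clustering, and assigning color $s+1$ to $Z$ completes an $(s+1)$-coloring of $G$ with bounded clustering.

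Claim (i) would follow from pigeonhole: a $K_{s-1,t'}$ in $G-Z$ whose large side $B$ is sufficiently big must contain a subset $B'\subseteq B$ whose $a$-values lie in a common descendant subtree, so some vertex of the separating bag above $B'$ has at least $M$ neighbors in $B'$, forcing it into $Z$; adjoining it to the $(s-1)$-side produces $K_{s,t}$ in $G$, a contradiction. Claim (ii) uses that two adjacent hubs each contribute many descendant neighbors; aligning these through the tree structure with $K_{s,t}$-freeness should bound the length of any path in $G[Z]$. The main obstacle is tuning $M$ and the hub definition so that (i) and (ii) hold simultaneously: it may be necessary to recurse on subtrees of $T$, or to refine the hub classification based on the structure of the bag $B_{a(v)}$, in order to control cluster sizes in $G[Z]$ without sacrificing the pigeonhole that powers (i).
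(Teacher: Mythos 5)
Your overall plan—peel off a bounded color class of ``high-degree-towards-descendants'' vertices and recurse on $s$—is a natural local strategy and quite different from the paper's proof, which establishes the stronger list-coloring version (\cref{twmain}) via a global argument: one takes a minimum counterexample with a precolored set $Y_1$, enlarges $Y_1$ when it is small (\cref{enlarge Y}), and when $Y_1$ is large either splits along a small-order separation or produces a tangle of order $w+2$ (\cref{coloring or tangle}), which is impossible in a graph of treewidth at most $w$. Your claim~(i) is in fact correct as you sketch it: taking $t' > w+M$, if $K_{s-1,t'}\subseteq G-Z$ with parts $A,B$, then for each $a_i\in A$ at most $M-1$ vertices of $B$ have $a$-value strictly below $a(a_i)$; every other $b\in B$ has $a(b)$ an ancestor of $a(a_i)$, which together with the edge $a_ib$ forces $b\in B_{a(a_i)}$, so $\lvert B_{a(a_i)}\rvert \geq t'-M+2 > w+1$, a contradiction. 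That part of the plan is sound.

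However, both the base case and claim~(ii) break. For the base case, the parity-of-$a$-depth coloring fails even on a path: take $G$ the path $v_1\cdots v_n$ and $T$ the path $x_0x_1\cdots x_{2n-2}$ rooted at $x_0$, with $v_i$ placed in bags $B_{x_{2i-2}},B_{x_{2i-1}},B_{x_{2i}}$. This is a width-$1$ tree decomposition, yet $a(v_i)=x_{2i-2}$ has even depth for every $i$, so the parity coloring makes the entire path monochromatic. The $s=1$ case is the theorem of \citet{ADOV03} and genuinely requires a more careful recursive splitting of the decomposition; a parity coloring of an arbitrary (or even minimal) rooted tree decomposition does not suffice. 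More seriously, claim~(ii) is false as stated for any choice of threshold $M$: take $G$ to be a complete binary tree of depth $D$ with $M$ pendant leaves attached to every internal node, and $T$ the natural rooted tree decomposition with $B_x=\{x,\mathrm{parent}(x)\}$, so $a(v)=v$. Then $G$ has treewidth $1$ and (being a tree) no $K_{2,2}$ subgraph, yet every internal node has $M$ neighbors with $a$-values that are strict descendants, so every internal node is a hub. The hubs induce the entire internal binary tree, a connected subgraph of $G[Z]$ of size $2^D-1$, which is unbounded in $D$. So the color class $Z$ has arbitrarily large monochromatic components. You flag ``tuning $M$ and the hub definition'' as the main obstacle, and indeed it is: the difficulty is not a matter of tuning constants but of the hub notion itself, and the paper avoids it altogether by working globally with tangles and precolored boundary sets rather than by exhibiting a single bounded-clustering color class directly.
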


The number of colors in \cref{twbasic} is best possible: for all $s,c\in\mathbb{N}$ there is a graph $G$ with treewidth $s$, with no $K_{s,s+2}$ subgraph, and such that every $s$-coloring of $G$ has a monochromatic component on at least $c$ vertices. The construction is analogous to the construction above except that in the base case ($s=1$) we use a long path instead of the triangular grid. This is called a ``standard'' example in \citep{WoodSurvey}.

We actually prove the following list-coloring result, which immediately implies \cref{twbasic}.

\begin{theorem}
\label{twmain}
For all $s,t,w\in\mathbb{N}$, there exists $\eta\in\mathbb{N}$ such that every graph with treewidth at most $w$ and with no $K_{s,t}$ subgraph is $(s+1)$-choosable with clustering $\eta$.
\end{theorem}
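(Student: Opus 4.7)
We argue by induction on $s$. The base case $s=1$ states that graphs of treewidth at most $w$ with no $K_{1,t}$ subgraph (i.e., maximum degree less than $t$) are $2$-choosable with bounded clustering; this is the $s=1$ special case of the analogous main theorem of Liu and Oum~\cite{LO17}, and it can also be obtained directly from standard tree-decomposition arguments.

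For the inductive step, assume the theorem for $s-1$ and let $G$ be a graph of treewidth at most $w$ with no $K_{s,t}$ subgraph, equipped with a list assignment $L$ satisfying $|L(v)|\geq s+1$ for every $v$. Fix a tree decomposition $(T,\mathcal{B})$ of $G$ of width at most $w$. The strategy is to construct a set $Z\subseteq V(G)$ together with a partial coloring $c\colon Z\to\bigcup_{v\in V(G)} L(v)$ with $c(v)\in L(v)$ such that (a) every monochromatic component of $c$ in $G[Z]$ has size at most $\eta_1=\eta_1(s,t,w)$, (b) $G-Z$ has no $K_{s-1,t'}$ subgraph for some $t'=t'(s,t,w)$, and (c) for each $v\in V(G)\setminus Z$, the reduced list $L'(v):=L(v)\setminus\{c(u):u\in Z\cap N_G(v)\}$ has size at least $s$. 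Given such $(Z,c)$, applying the induction hypothesis to $G-Z$ with list assignment $L'$ yields a proper list-coloring of $G-Z$ of bounded clustering; gluing this with $c$ produces a proper $L$-coloring of $G$ whose monochromatic components are bounded, while (c) prevents any $Z$-color from extending across the boundary into a large monochromatic component.

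The construction of $(Z,c)$ proceeds along the tree $T$ and rests on the following consequence of $K_{s,t}$-freeness combined with bounded treewidth: for every $S\subseteq V(G)$ with $|S|\leq w+1$, at most $(t-1)\binom{|S|}{s}$ vertices of $G$ have at least $s$ neighbors in $S$, since otherwise pigeonhole on the $\binom{|S|}{s}$ choices of $s$-subset would produce a $K_{s,t}$ subgraph. Processing bags from the root downward, at each bag $B_x$ we add to $Z$ those vertices whose forbidden-color set (induced by already-colored $Z$-vertices in the relevant interface near $B_x$) exhausts $s$ of the $s+1$ colors in their list, assigning $c(v)$ to one of the remaining colors. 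The extremal lemma above caps the number of such additions per bag and is also the mechanism that forces $K_{s-1,t'}$-freeness of $G-Z$.

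The main obstacle is guaranteeing (a) simultaneously with (b) and (c): since each $v\in Z$ has almost no color freedom, a same-colored clump in $G[Z]$ could in principle propagate across many bags. The plan is to layer the construction so that any monochromatic component of $G[Z]$ is confined to the $Z$-vertices attached to a bounded number of ancestrally related bags, each contributing only $O_{s,t,w}(1)$ vertices by the extremal lemma, and to break ties among the remaining colors so as to avoid merging previously separated clumps. This bookkeeping, rather than any new structural ingredient, is expected to absorb the bulk of the proof.
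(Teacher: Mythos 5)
Your proposal takes a genuinely different route from the paper, but it has a gap that I don't see how to close. The paper does \emph{not} induct on $s$: it works with a precolored set $Y_1$ and $(s,r,Y_1)$-list-assignments, takes a minimum counterexample, and shows that if $|Y_1|$ is small one can enlarge it (\cref{enlarge Y}), while if $|Y_1|$ is large one obtains either a smaller counterexample via a separation or a tangle of order $w+2$ (\cref{coloring or tangle}), the latter contradicting treewidth at most $w$. There is no reduction from $K_{s,t}$-free to $K_{s-1,t'}$-free anywhere in the argument.

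The gap in your plan is in the interaction between your construction of $Z$ and your requirement (c). You add $v$ to $Z$ when its forbidden-color set reaches size $s$, i.e.\ when $|L(v)\setminus\{\text{forbidden}\}|$ drops to $1$. Consequently, for every $v\notin Z$ the forbidden set has size at most $s-1$, so the best you can guarantee is $|L'(v)|\geq (s+1)-(s-1)=2$. But to invoke the induction hypothesis at level $s-1$ you need $|L'(v)|\geq s$, which requires forbidding at most \emph{one} color, not up to $s-1$. For $s\geq 3$ the construction as described simply does not produce lists large enough for the inductive call. If you instead raise the threshold so that $v$ enters $Z$ as soon as two distinct colors are forbidden (to preserve $|L'(v)|\geq s$), then the extremal bound $(t-1)\binom{|S|}{s}$ on vertices with $\geq s$ neighbors in a bag is no longer the relevant control, and $Z$ can blow up. Moreover, even setting the list sizes aside, you have not established that (b) and (c) can be satisfied simultaneously: $K_{s,t}$-freeness gives no obvious small hitting set for $K_{s-1,t'}$ subgraphs, and whatever set $Z$ one deletes to kill them may force many distinct forbidden colors onto surviving vertices. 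Finally, requirement (a) — bounding monochromatic components inside $G[Z]$ when $Z$-vertices have essentially no color freedom — is exactly the kind of propagation problem that the paper's $(W,F)$-progress machinery and \cref{enlarge Y} are built to control; deferring it to ``bookkeeping'' understates what is actually needed.
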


\cref{twmain} will be proved in \cref{TanglesTreewidth}.

The case $s=1$ of \cref{twmain} is an unpublished result of the first author (see \cite[Theorem 6.4]{SeymourHC}), which generalizes a result of \citet*{ADOV03} who proved \cref{twbasic} in the case $s=1$ (with much better bounds on $\eta$). 

\cref{twmain} immediately implies results for graphs with bounded treewidth and with no $K_{s,t}$-minor, although we emphasise that \cref{twmain} holds in the stronger setting of an excluded $K_{s,t}$ subgraph. 
In particular, \cref{twmain}  implies that graphs with bounded treewidth and with no $K_{s,t}$ minor are $(s+1)$-choosable with bounded clustering. Since $K_{s+1}$ is a minor of $K_{s,s}$, this in turn implies that graphs with bounded treewidth and with no $K_{s+1}$-minor are $(s+1)$-choosable with bounded clustering. \citet{DN17} proved this result with one fewer color.  That is,  graphs with bounded treewidth and with no $K_{s+1}$-minor are $s$-choosable with bounded clustering. In our companion paper \cite{LW3}, we strengthen this result by showing that 
graphs with bounded treewidth and with no $K_{s+1}$-topological-minor are $s$-choosable with bounded clustering. This says that a clustered version of Haj\'os' conjecture holds for bounded treewidth graphs, and even holds for choosability.
Results in this paper are critical components for \cite{LW3}.

\subsection{Excluded Odd Minors}

Gerards and Seymour (see \citep[\S6.5]{JT95}) conjectured that every graph with no odd $K_{s+1}$ minor is properly $s$-colorable, which implies Hadwiger's conjecture. 
The best known upper bound on the chromatic number of graphs with no odd $K_{s+1}$ minor is $O(s(\log\log s)^6)$, due to \citet{Postle20c}, improving earlier results of \citet*{GGRSV09} and \citet{NorinSong19b}. 
It is open whether such graphs are properly $O(s)$ colorable. The first $O(s)$ bound on the clustered chromatic number was established by \citet{Kawa08}, who proved that every graph with no odd $K_s$ minor is $496s$-colorable with bounded clustering. The number of colors was improved to $10s-13$ by \citet{KO19}. We make the following modest improvement. 

\begin{theorem}
\label{OddMinor}
For all $s\in\mathbb{N}$ there exists $\eta\in\mathbb{N}$ such that every graph with no odd $K_{s+1}$ minor is $(8s-4)$-colorable with clustering $\eta$. 
\end{theorem}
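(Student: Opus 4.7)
The plan is to leverage the paper's earlier theorem, which states that every graph with no $K_{s,t}$ subgraph and no odd $K_{s+1}$-minor admits a $(2s+1)$-coloring with bounded clustering, with $s-1$ of the color classes being independent sets. A general graph $G$ with no odd $K_{s+1}$-minor may contain arbitrarily large $K_{s,t}$ subgraphs (for instance, any bipartite graph avoids all odd minors), so a direct application fails and a preprocessing step is needed.

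Given $G$ with no odd $K_{s+1}$-minor, I first fix $t$ large enough as a function of $s$ and the target clustering $\eta$. I then iteratively extract a set $W\subseteq V(G)$ such that $G-W$ contains no $K_{s,t}$ subgraph: while a $K_{s,t}$ remains, I move its $s$-side into $W$ and continue. The resulting $G-W$ inherits the no-odd-$K_{s+1}$-minor property and is $K_{s,t}$-subgraph-free, so the earlier theorem supplies a $(2s+1)$-coloring of $G-W$ with bounded clustering, of which $s-1$ classes are independent sets.

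Next, I color $G[W]$. Iterating the same extraction inside $G[W]$ yields a nested decomposition into layers, each of which can be $(2s+1)$-colored with bounded clustering via the earlier theorem. Since independent sets remain independent in any subgraph, the $s-1$ stable color classes can be shared across successive layers, so only the remaining non-stable colors need to be refreshed for each new layer. The key structural claim is that this iteration terminates after a bounded number of rounds: if the extraction could continue for more than a controlled number of stages, the accumulated $K_{s,t}$'s, together with connecting paths inside $G$, would assemble into a forbidden odd $K_{s+1}$ minor of $G$, a contradiction. With the number of rounds bounded, a careful accounting of shared versus fresh colors across the layers yields the total $8s-4$.

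The main obstacle is the structural claim that bounds the number of iterations. The delicacy is the parity requirement for an \emph{odd} minor: the nested $K_{s,t}$ structures and the connecting paths must be routed so that each candidate branch set admits a compatible two-coloring with every cross-edge respecting the parity constraint. Producing these parity-respecting routings, rather than merely the existence of connecting paths, is where the odd-minor hypothesis is genuinely used and where the bulk of the technical work lies.
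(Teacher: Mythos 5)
Your proposal starts from the right ingredient, the companion bound that odd-$H$-minor-free graphs with no $K_{s,t}$ subgraph are $(2s+1)$-colorable with bounded clustering and $s-1$ stable classes (\cref{oddminorbasic}), but then diverges sharply from the paper, and the divergence is where the gap lies. The paper does not iteratively peel off $K_{s,t}$'s; it invokes Lemma 5.1 of \citet{KO19} as a black box. That lemma says that if every odd-$K_{t+1}$-minor-free graph with no bipartite $K^*_{2t,t+1}$-subdivision is $d$-colorable with clustering $\eta$, then every odd-$K_{t+1}$-minor-free graph is $(d+4t-3)$-colorable with clustering $\eta$. Applying \cref{oddminorbasic} with $s$ replaced by $2s-1$ (and a suitable $t$) gives $d=4s-1$ for the restricted class, and then Kang--Oum's lemma adds $4s-3$ colors, giving $8s-4$. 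The entire structural work of handling graphs with unbounded $K_{s,t}$'s is absorbed into that cited lemma.

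Your sketch tries to reprove, essentially from scratch, the content of that structural lemma, and the two places where your outline leans hardest are exactly the places where it fails. First, the ``bounded number of rounds'' claim is neither proved nor plausible in the form stated: you do not specify how the extracted $K_{s,t}$'s would be wired into a single odd $K_{s+1}$ minor, and in concrete examples (for instance $K_{n,n}$, which has no odd $K_3$ minor) the extraction can move $\Theta(n)$ vertices without ever exhibiting an odd clique minor. What actually bounds the structure in \citet{KO19} is a careful analysis around bipartite $K^*_{2t,t+1}$-subdivisions, not iterated $K_{s,t}$ removal, and reproducing it would require their argument in full, not a paragraph asserting it. Second, the claim that the $s-1$ stable color classes ``can be shared across successive layers'' is false as stated: if $I_1$ is a stable set in $G-W$ and $I_2$ is a stable set in $G[W]$, then $G[I_1\cup I_2]$ is a (possibly dense) bipartite graph with edges only between $I_1$ and $I_2$, so reusing the same color produces arbitrarily large monochromatic components. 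Until those two claims are replaced by real arguments (or by a citation to the Kang--Oum reduction, as the paper does), the proposal does not establish the $8s-4$ bound.
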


\cref{OddMinor} will be proved in \cref{GerardsSeymour}.

More interestingly, we prove the following analogue of \cref{minorbasic} for excluded odd minors and excluded subgraphs. 

\begin{theorem}
\label{oddminorbasic}
For all $s,t\in\mathbb{N}$ and for every graph $H$ there exists $\eta\in\mathbb{N}$ such that every graph with no odd $H$-minor and with no $K_{s,t}$ subgraph is $(2s+1)$-colorable with clustering $\eta$. Moreover, at least $s-1$ color classes are stable sets.
\end{theorem}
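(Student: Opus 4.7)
The plan is to proceed by induction on $s$. The base case $s = 1$ is the theorem of the first author and Oum \cite{LO17}: every graph with no odd $H$-minor and no $K_{1,t}$ subgraph is $3$-colorable with bounded clustering. The requirement that at least $s - 1 = 0$ color classes be stable is vacuous here, so the base case is immediate.

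For the inductive step, fix $s \geq 2$ and assume the statement for $s - 1$ with any parameter $t$ and any excluded graph. Let $G$ have no odd $H$-minor and no $K_{s,t}$ subgraph. I would aim to find a stable set $S \subseteq V(G)$ together with a set $T \subseteq V(G)$ such that $G[T]$ has components of bounded size, and such that $G' := G - (S \cup T)$ has no $K_{s-1, t^{*}}$ subgraph for some $t^{*} = t^{*}(s, t, H)$. Since $G'$ is a subgraph of $G$, it automatically inherits the absence of an odd $H$-minor, so the inductive hypothesis applied to $G'$ yields a $(2s - 1)$-coloring of $G'$ with bounded clustering and at least $s - 2$ stable color classes. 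Assigning two new colors to $S$ (a stable class) and $T$ (a class of bounded-size monochromatic components) then gives a $(2s + 1)$-coloring of $G$ with bounded clustering in which at least $s - 1$ classes are stable, as required. The arithmetic balances: each reduction from $s$ to $s-1$ costs two colors (one stable, one bounded), and after $s-1$ reductions we reach the Liu--Oum base case contributing the remaining $3$ colors, totalling $2(s-1) + 3 = 2s + 1$ colors and $s - 1$ stable classes.

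The natural candidate for $S \cup T$ is the set $M$ of vertices $v \in V(G)$ that belong to some $(s-1)$-subset $A \subseteq V(G)$ with $|N_G(A)| \geq t^{*}$. By construction, every $(s-1)$-subset of $V(G) \setminus M$ has fewer than $t^{*}$ common neighbors in $G$, and hence $G - M$ has no $K_{s-1, t^{*}}$ subgraph. The main obstacle is therefore to partition $M$ into a stable set and a set inducing bounded-size components, with bounds depending only on $s$, $t$, and $H$. This requires combining the $K_{s,t}$-subgraph-free hypothesis (which constrains the codegrees of $s$-subsets of $M$) with the odd-$H$-minor-free hypothesis (which should restrict the global structure of $G[M]$, presumably via a structure theorem for odd-minor-free graphs such as that of Geelen, Gerards, Reed, Seymour, and Vetta, or via a tangle-based argument in the spirit of the companion paper). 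The two-coloring of $G[M]$ is the technical heart of the inductive step and plays the same role here that the bounded-degree analysis of \cite{LO17} plays in the $s = 1$ base case.
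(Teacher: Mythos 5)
Your reduction has the right high-level shape (peel off a stable class and a bounded-cluster class, drop $s$ by one, reach the Liu--Oum base case, totalling $2(s-1)+3 = 2s+1$ colors), and your observation that $G-M$ has no $K_{s-1,t^*}$ subgraph is correct. However, the step you flag as ``the technical heart'' --- partitioning $M$ into a stable set $S$ and a set $T$ with $G[T]$ of bounded clustering --- is not merely unproved; for your specific choice of $M$ it is \emph{false}, and the failure is illustrated by the very construction the paper uses for its lower bounds.

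Take $s=2$, $t=3$, $H=K_5$, and let $t^*=t^*(s,t,H)$ be whatever threshold you intend to use. Start from the $\eta\times\eta$ triangular grid $T_\eta$ and attach $t^*$ pendant vertices to each vertex of $T_\eta$ to form $G$. Then $G$ is planar, hence has no $K_5$ minor and a fortiori no odd $K_5$ minor, and $G$ has no $K_{2,3}$ subgraph (any two grid vertices have at most two common neighbours, none of which are pendants, and any pair involving a pendant has at most one common neighbour). For $s=2$, your set $M$ is exactly the set of vertices of degree at least $t^*$, i.e.\ $M=V(T_\eta)$, so $G[M]$ is the full $\eta\times\eta$ triangular grid. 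By the Hex Lemma, every $2$-coloring of $T_\eta$ has a monochromatic path on $\eta$ vertices; in particular if one class is stable the other class has a monochromatic component of unbounded size as $\eta\to\infty$. So no choice of $t^*$ (depending only on $s,t,H$) makes your decomposition of $M$ possible, and the induction on $s$ cannot close with this $M$. More generally, restricting to high-codegree vertices does not simplify the graph: $G[M]$ inherits exactly the same hypotheses (no odd $H$-minor, no $K_{s,t}$ subgraph) as $G$ and can be essentially as complicated.

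The paper avoids an induction on $s$ entirely. It proves a stronger precolored list-coloring statement (\cref{OddMinorFree}, via $(s,Y_1,\ell,r)$-list-assignments) by a minimal-counterexample argument: a small precolored set is enlarged via \cref{enlarge Y}; a large precolored set forces a tangle via \cref{coloring or tangle}; that tangle either fails to control a large clique minor (handled by Robertson--Seymour's structure theorem plus a layered-treewidth coloring result from the companion paper, \cref{NotControllingMinor}) or it does control one, in which case the Geelen--Gerards--Reed--Seymour--Vetta theorem (\cref{structure odd minor}) supplies a small apex set $Z$ and a bipartite block, whose two sides absorb the extra $s$ colors in a controlled way. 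The $2$-coloring you hoped to apply to $G[M]$ is, in the actual proof, applied only to a bipartite block delivered by the odd-minor structure theorem, not to the codegree-heavy set $M$.
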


\cref{oddminorbasic} will be proved in \cref{MainProofs}.

The case of $s=1$ in \cref{oddminorbasic} was proved by the first author and Oum \cite{LO17}. Here, three colors is best possible. 

Note that no clustered choosability result is possible for graphs excluding an odd minor, since the complete bipartite graph $K_{n,n}$ contains no odd $K_3$ minor, but it follows from the work of \citet{Kang13} that for all $k,\eta\in\mathbb{N}$ there exists $n\in\mathbb{N}$ such that $K_{n,n}$ is not $k$-choosable with clustering $\eta$.

\subsection{\boldmath $K_{s,t}$-Minor-Free Graphs}

Consider graphs with no $K_{s,t}$ minor for $s\leq t$. \VdHW\ observed that results of \citet{EKKOS15} and Ossona de Mendez, Oum and the second author \cite{OOW19} imply that such graphs are $3s$-colorable with bounded clustering, which was improved to $2s+2$ by \citet{DN17}.  
\cref{minorbasic} immediately implies the following further improvement:

\begin{corollary}
\label{KstMinorFree}
For all $s,t\in\mathbb{N}$, there exists $\eta\in\mathbb{N}$ such that every graph with no $K_{s,t}$-minor is $(s+2)$-colorable with clustering $\eta$.
\end{corollary}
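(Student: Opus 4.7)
The plan is simply to derive \cref{KstMinorFree} as a direct specialization of \cref{minorbasic}. Given $s,t\in\mathbb{N}$, apply \cref{minorbasic} with the excluded minor taken to be $H:=K_{s,t}$ and with the same values of $s$ and $t$ for the excluded $K_{s,t}$ subgraph. Let $\eta$ be the integer produced by \cref{minorbasic} for this choice of $s$, $t$, and $H$.

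Now let $G$ be any graph with no $K_{s,t}$ minor. Since every subgraph of $G$ is in particular a minor of $G$, the assumption that $G$ contains no $K_{s,t}$ minor immediately implies that $G$ contains no $K_{s,t}$ subgraph. Moreover, $G$ contains no $H=K_{s,t}$ minor by hypothesis. Thus the two structural assumptions of \cref{minorbasic} are satisfied, and we conclude that $G$ is $(s+2)$-colorable with clustering $\eta$.

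Since this deduction is essentially one line of bookkeeping, there is no real obstacle: the entire content of \cref{KstMinorFree} is already encapsulated in \cref{minorbasic}, and the only point to verify is the trivial observation that excluding $K_{s,t}$ as a minor is a (strictly) stronger condition than excluding it as a subgraph. No additional machinery, induction, or computation is required.
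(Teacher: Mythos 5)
Your derivation is correct and is exactly the (implicit) derivation the paper intends: since any $K_{s,t}$ subgraph is in particular a $K_{s,t}$ minor, a graph excluding $K_{s,t}$ as a minor satisfies both hypotheses of \cref{minorbasic} with $H=K_{s,t}$, yielding the desired conclusion. Nothing further is needed.
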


The best known lower bound on the clustered chromatic number of $K_{s,t}$-minor-free graphs is $s+1$, due to \vdHW. It is open whether every $K_{s,t}$-minor-free graph is $(s+1)$-colorable with bounded clustering. \VdHW\ proved this in the $s=2$ case. 

\subsection{\boldmath $H$-Minor-Free Graphs}

Now consider the clustered chromatic number of $H$-minor-free graphs, for an arbitrary graph $H$. A \emph{vertex cover} of  a graph $H$ is a set $S\subseteq V(H)$ such that $H-S$ has no edges. Suppose that $H$ has a vertex cover of size $s$. Then $H$ is a minor of $K_{s,|V(H)|-1}$ (obtained by contracting a matching of size $s-1$ in $K_{s,|V(H)|-1}$). So every graph containing no $H$-minor contains no $K_{s,|V(H)|-1}$-minor. \cref{KstMinorFree} thus implies:

\begin{corollary}
\label{VertexCover}
For all $s \in {\mathbb N}$ and for every graph $H$ that has a vertex-cover of size at most $s$, 
there exists $\eta \in \mathbb{N}$ such that every graph with no $H$-minor is $(s+2)$-colorable with clustering $\eta$.
\end{corollary}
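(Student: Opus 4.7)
The plan is to derive this directly from \cref{KstMinorFree} by verifying the minor relation sketched in the paragraph just before the statement. Fix $s$ and $H$, let $n:=|V(H)|$, and let $S\subseteq V(H)$ be a vertex cover of $H$ with $|S|\leq s$. Without loss of generality $|S|=s$ (we may pad $S$ by moving isolated-in-$H-S$ vertices into it; if $|V(H)|<s$ then $H$ is trivially a minor of $K_{s+2}$ and the result is immediate from \cref{ClusteredMinor}). Set $t:=n-1$.

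The first step is to show that $H$ is a minor of $K_{s,t}$. Write the bipartition of $K_{s,t}$ as $A\cup B$ with $A=\{a_1,\dots,a_s\}$ and $B=\{b_1,\dots,b_{n-1}\}$, and contract the matching $\{a_ib_i : 1\leq i\leq s-1\}$. The resulting graph has vertex set consisting of $s-1$ contracted pairs $v_i=\{a_i,b_i\}$, the vertex $a_s$, and the vertices $b_s,\dots,b_{n-1}$; a quick check shows $\{v_1,\dots,v_{s-1},a_s\}$ is a clique of size $s$, each $b_j$ (for $j\geq s$) is joined to every vertex of this clique, and the $b_j$'s form an independent set. This is exactly the join $K_s\vee\overline{K_{n-s}}$, which contains $H$ as a subgraph, because the vertex cover $S$ embeds into the clique side and the stable set $V(H)\setminus S$ embeds into the independent side. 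Hence $H\preceq K_{s,t}$.

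The second step is the contrapositive combined with the earlier result. Any graph that contains $K_{s,t}$ as a minor also contains $H$ as a minor by transitivity of the minor relation, so every $H$-minor-free graph is also $K_{s,t}$-minor-free. Applying \cref{KstMinorFree} to this parameter pair $(s,t)=(s,n-1)$ yields an integer $\eta=\eta(s,n-1)$ such that every $K_{s,t}$-minor-free graph, and in particular every $H$-minor-free graph, admits an $(s+2)$-coloring with clustering $\eta$.

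There is really no main obstacle: all the work has already been done inside \cref{minorbasic} (and hence \cref{KstMinorFree}), and the only new ingredient is the standard matching-contraction argument for the minor containment $H\preceq K_{s,|V(H)|-1}$. The only small care needed is the edge case where $|V(H)|\leq s$, which is handled separately by \cref{ClusteredMinor} since then $H$ itself is a minor of $K_{s+2}$.
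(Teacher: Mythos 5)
Your proof is correct and follows exactly the paper's approach: contract a matching of size $s-1$ in $K_{s,|V(H)|-1}$ to obtain $K_s + \overline{K_{|V(H)|-s}}$, observe that $H$ embeds as a subgraph (vertex cover to the clique side), conclude $H\preceq K_{s,|V(H)|-1}$, and invoke \cref{KstMinorFree}. One small slip in the (unnecessary) edge case: if $|V(H)|<s$ you want $H\preceq K_{s+1}$ so that $H$-minor-free graphs are $K_{s+1}$-minor-free and hence $(s+2)$-colorable by \cref{ClusteredMinor}; writing $K_{s+2}$ would only give $(s+3)$ colors.
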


\newcommand{\ctd}{\overline{\text{td}}}

We now relate this result to a conjecture of Norin, Scott, Seymour and the second author~\cite{NSSW19} about the clustered chromatic number of   $H$-minor-free graphs. Let $T$ be a rooted tree. The \emph{depth} of $T$ is the maximum number of vertices on a root--to--leaf path in $T$. The \emph{closure} of $T$ is obtained from $T$ by adding an edge between every ancestor and descendent in $T$. The \emph{connected tree-depth} of a graph $H$, denoted by $\ctd(H)$, is the minimum depth of a rooted tree $T$ such that $H$ is a subgraph of the closure of $T$. \citet{NSSW19} observed that for every graph $H$ and $\eta\in\mathbb{N}$ there is an $H$-minor-free graph that is not $(\ctd(H)-2)$-colorable with clustering $\eta$; thus the clustered chromatic number of $H$-minor-free graphs is at least $\ctd(H)-1$. On the other hand, \citet{NSSW19} conjectured that the class of  $H$-minor-free graphs has clustered chromatic number at most $2\,\ctd(H)-2$, which would be tight for certain graphs $H$. As evidence for this conjecture, \citet{NSSW19} proved that the clustered chromatic number  of $H$-minor-free graphs is at most $2^{\ctd(H)+1}-4$.

For $h \in {\mathbb N}$ with $h \geq 3$, a \emph{broom} of height $h$ is a rooted tree that can be obtained from a star rooted at a leaf by subdividing the edge incident with the root $h-3$ times. 
The closure of a broom of height $h$ has a vertex-cover of size at most $h-1$. 
Thus \cref{VertexCover} can be restated as follows:

\begin{corollary}
\label{HMinorFree}
For every integer $h\geq 3$, if $H$ is a subgraph of the closure of the broom of height $h$, then there exists $\eta \in \mathbb{N}$ such that every graph with no $H$-minor is $(h+1)$-colorable with clustering $\eta$.
\end{corollary}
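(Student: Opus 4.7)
The plan is to recognize that \cref{HMinorFree} is essentially a direct restatement of \cref{VertexCover}, so the only substantive work is to verify the claim already flagged in the paragraph immediately preceding: that the closure of a broom of height $h$ admits a vertex cover of size at most $h-1$. Once this is established, the result follows by taking $s = h-1$ in \cref{VertexCover}, yielding $(s+2) = (h+1)$-colorability with bounded clustering.

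First I would unpack the definition of the broom $B_h$ of height $h$. By construction, $B_h$ is obtained from a star (centered at some vertex $c$, with one distinguished leaf-root $r$ and some additional leaves) by subdividing the edge $rc$ exactly $h-3$ times. Thus $B_h$ consists of a path $r = u_1, u_2, \ldots, u_{h-1} = c$ from the root down to the former star-center, together with a collection of leaves attached at $u_{h-1}$. The rooted depth is $h$ (root at depth $1$, star leaves at depth $h$).

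Next I would analyze the closure $C(B_h)$. Every edge of $C(B_h)$ connects a vertex to one of its ancestors in $B_h$. Let $S := \{u_1, \ldots, u_{h-1}\}$ be the set of path vertices, which has size $h-1$. The remaining vertices $V(C(B_h)) \setminus S$ are the leaves attached at $u_{h-1}$; these leaves have no descendants, and their only ancestors lie in $S$. Consequently every edge of $C(B_h)$ has at least one endpoint in $S$, so $S$ is a vertex cover of $C(B_h)$ of size $h-1$. Since $H$ is a subgraph of $C(B_h)$, the set $S \cap V(H)$ is a vertex cover of $H$ of size at most $h-1$.

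Finally I would invoke \cref{VertexCover} with $s := h-1$: since $H$ has a vertex cover of size at most $s$, there exists $\eta \in \mathbb{N}$ such that every graph with no $H$-minor is $(s+2) = (h+1)$-colorable with clustering $\eta$, as required. There is no substantive obstacle here; the only care needed is to correctly identify $S$ as a vertex cover, which hinges on the observation that the star-leaves of $B_h$ form an independent set in the closure because they are pairwise incomparable in the ancestor relation.
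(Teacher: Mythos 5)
Your proof is correct and follows exactly the paper's route: the paper's remark immediately before \cref{HMinorFree} states without proof that the closure of the broom of height $h$ has a vertex cover of size at most $h-1$, and you supply the straightforward verification (the $h-1$ path vertices form the cover because the star-leaves are pairwise incomparable) before invoking \cref{VertexCover} with $s=h-1$. No discrepancies.
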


Note that the depth of the broom of height $h$ is $h$. 
So \cref{HMinorFree} answers the aforementioned conjecture of \citet{NSSW19} in a stronger sense (since $h+1 \leq 2h-2$) when the underlying tree $T$ is a broom.

\subsection{Excluded Subdivisions}

Our companion paper \citep{LW3} studies clustered colourings of graphs excluding various graph subdivisions. The methods build heavily on those introduced in this paper and reference \citep{LW3} uses some results in this paper as a black box. For example, we prove in \cite{LW3} that graphs with bounded treewidth and with no almost $(\leq 1)$-subdivision of $K_{s+1}$ are $s$-choosable with bounded clustering. 
Here a graph is an {\it almost $(\leq 1)$-subdivision} of a graph $H$ if it can be obtained from $H$ by subdividing edges, where at most one edge is subdivided more than once.
This result is a clustered choosability version of Haj\'os conjecture for graphs of bounded treewidth. 
Allowing one more colour, we prove an analogous result for graphs excluding a fixed minor; that is, we prove that graphs excluding a fixed graph as a minor and with no almost $(\leq 1)$-subdivision of $K_{s+1}$ are $(s+1)$-colorable with bounded clustering. 

\subsection{Standard Definitions}
\label{Definitions}

Let $\mathbb{N}:=\{1,2,\dots\}$ and $\mathbb{N}_0:=\{0,1,2,\dots\}$. For $m,n\in\mathbb{N}$, let $[m,n]:=\{m,m+1,\dots,n\}$ and $[n]:=[1,n]$. 

Let $G$ be a graph (allowing loops and parallel edges). For $v\in V(G)$, let $N_G(v):=\{w\in V(G): vw\in E(G)\}$ be the neighborhood of $v$, and let  $N_G[v] :=N_G(v) \cup\{v\}$. For $X\subseteq V(G)$, let $N_G(X):= \bigcup_{v\in X} (N_G(v) - X)$ and $N_G[X]:= N_G(X)\cup X$. Denote the subgraph of $G$ induced by $X$ by $G[X]$.

For our purposes, a \emph{color} is an element of $\mathbb{Z}$. A \emph{list-assignment} of a graph $G$ is a function $L$ with domain containing $V(G)$, such that $L(v)$ is a non-empty set of colors for each vertex $v\in V(G)$. For a list-assignment $L$ of $V(G)$, an \emph{$L$-coloring} of $G$ is a function $c$ with domain $V(G)$ such that $c(v) \in L(v)$ for every $v \in V(G)$. 
An $L$-coloring has \emph{clustering} $\eta$ if every monochromatic component has at most $\eta$ vertices. A list-assignment $L$ of a graph $G$ is an \emph{$\ell$-list-assignment} if $|L(v)|\geq\ell$ for every vertex $v\in V(G)$. 
A graph is \emph{$\ell$-choosable with clustering $\eta$} if $G$ is $L$-colorable with clustering $\eta$ for every $\ell$-list-assignment $L$ of $G$. 

A graph $H$ is a \emph{minor} of a graph $G$ if a graph isomorphic to $H$ can be obtained from a subgraph of $G$ by contracting edges. Here we allow $H$ to have loops and parallel edges. 
The following is an alternative definition of graph minor. Let $H$ be a graph. An {\it $H$-minor} of a graph $G$ is a map $\alpha$ with domain $V(H) \cup E(H)$ such that:
\begin{itemize}
	\item For every $h \in V(H)$, $\alpha(h)$ is a nonempty connected subgraph of~$G$ (called a \emph{branch set}), 
	\item If $h_1$ and $h_2$ are different vertices of~$H$, then $\alpha(h_1)$ and $\alpha(h_2)$ are disjoint.
	\item For each edge $e=h_1h_2$ of~$H$, $\alpha(e)$ is an edge of~$G$ with one end in $\alpha(h_1)$ and one end in $\alpha(h_2)$; furthermore, if $h_1=h_2$, then $\alpha(e) \in E(G)-E(\alpha(h_1))$. 
        \item If $e_1, e_2$ are distinct edges of~$H$, then $\alpha(e_1) \neq \alpha(e_2)$.
\end{itemize}
Then $\alpha$ is an {\it odd $H$-minor} if there exists a 2-coloring $c$ of $\bigcup_{h \in V(H)}\alpha(h)$ such that $c|_{\alpha(h)}$ is a proper 2-coloring of $\alpha(h)$, and for every edge $e$ of $H$, the ends of $\alpha(e)$ receive the same color in $c$. See \citep{Kawa08,KO19,GGRSV09,NorinSong19b,Postle20c} for work on odd minors. 

A \emph{tree decomposition} of a graph $G$ is a pair $(T,\mathcal{X}=(X_x:x\in V(T)))$, where $T$ is a tree and for each node $x\in V(T)$,  $X_x$ is a subset of $V(G)$ called a \emph{bag}, such that for each vertex $v\in V(G)$, the set $\{x\in V(T):v\in X_x\}$ induces a non-empty (connected) subtree of $T$, and for each edge $vw\in E(G)$ there is a node $x\in V(T)$ such that $v,w\in X_x$. A \emph{path decomposition} is a tree decomposition whose underlying tree is a path.
The \emph{width} of a tree decomposition $(T,\mathcal{X})$ is $\max\{|X_x|-1: x\in V(T)\}$. The \emph{treewidth} of a graph $G$ is the minimum width of a tree decomposition of $G$. For each integer $k$, the graphs with treewidth at most $k$ form a minor-closed class. \citet{RS-V} proved that a minor-closed class of graphs has bounded treewidth if and only if some planar graph is not in the class. treewidth is a key parameter in algorithmic and structural graph theory; see \citep{Reed97,Bodlaender-TCS98,HW17} for surveys. 

A \emph{separation} of a graph $G$ is an ordered pair $(A,B)$ of edge-disjoint subgraphs of $G$ with $A \cup B=G$. 
The \emph{order} of $(A,B)$ is $\lvert V(A \cap B) \rvert$. 

A {\it tangle} $\T$ in a graph $G$ of order $\theta \in {\mathbb Z}$ is a set of separations of $G$ of order less than $\theta$ such that the following hold:
	\begin{itemize}
		\item[(T1)] For every separation $(A,B)$ of $G$ of order less than $\theta$, either $(A,B) \in \T$ or $(B,A) \in \T$.
		\item[(T2)] If $(A_i,B_i) \in \T$ for $i \in [3]$, then $A_1 \cup A_2 \cup A_3 \neq G$.
		\item[(T3)] If $(A,B) \in \T$, then $V(A) \neq V(G)$.
	\end{itemize}

A \emph{surface} is a nonnull compact connected $2$-manifold without boundary. Every surface is homeomorphic to the sphere with $k$ handles (which has Euler genus $2k$) or the sphere with $k$ cross-caps (which has Euler genus $k$). The \emph{Euler genus} of a graph $G$ is the minimum Euler genus of a surface in which $G$ embeds; see \citep{MoharThom} for more on graph embeddings. 

\section{List Coloring Setup}
\label{Setup}

We prove \cref{minorbasic,oddminorbasic,twmain} using the same technique. A key is to actually prove stronger results that allow for a bounded-size set $Y$ of precolored vertices. We then require that not only every monochromatic component has bounded size, but also that the union of all the monochromatic components intersecting $Y$ has size at most $g(\lvert Y \rvert)$, for some function $g$. Assume that $G$ is a minimum counterexample, and subject to this, the size of $Y$ is as large as possible. We distinguish two cases depending on the size of $Y$. 

First consider the case that $Y$ is large. Let $\theta$ be a large number. If there exists a separation $(A,B)$ of $G$ of order less than $\theta$ such that both $V(A) \cap Y$ and $V(B) \cap Y$ contains at least $3\theta$ vertices, then we can precolor the vertices in $V(A \cap B)$ so that the number of precolored vertices in $A$ (and $B$, respectively) is smaller than $|Y|$, apply induction to each of $A$ and $B$ with the new precolored set to obtain a coloring of $A$ and a coloring of $B$, and then combine the colorings to obtain a coloring of $G$.  So we may assume that such a separation does not exist. This defines a tangle of order $\theta$. But such a tangle does not exist when the graph has bounded treewidth, which finishes this case for graphs of bounded treewidth (\cref{twmain}). 

For graphs excluding a minor (which might have unbounded treewidth), we apply Robertson and Seymour's Graph Minor Structure Theorem \citep{RS-XVI}, which describes the structure of graphs excluding a minor relative to a tangle of large order. For graphs excluding an odd minor, the extra ingredient is the structure theorem of \citet*{GGRSV09}. 
We then apply a result of Dujmovi\'{c}, Morin, and the second author~\cite{DMW17}, from which we (roughly) conclude that ignoring the apex vertices, our graph has bounded layered treewidth (defined below). We then apply a result in our companion paper \citep{LW1} that shows $(s+2)$-colorability with bounded clustering for graphs of bounded layered treewidth with no $K_{s,t}$-subgraph. From this we conclude the result. 

It remains to deal with the case that $Y$ is small. 

For the time being, assume that $s=1$; that is, $G$ has bounded maximum degree. 
Let $c_1,c_2,\dots,c_r$ be the colors appearing in $Y$, where $r \leq \lvert Y \rvert$.
First precolor $N(Y)$ so that no vertex uses $c_1$, then precolor $N(N(Y))$ so that no vertex uses $c_2$, and so on, until the $r$-th neighborhood of $Y$ is precolored so that no vertex uses $c_r$.
Let $Y'$ be the set of vertices at distance at most $r$ from $Y$. 
Since $Y$ is small and $G$ has bounded maximum degree, $|Y'|$ is bounded.
Apply induction to obtain a desired coloring of $G$ with $Y'$ precolored. 
Note that in this coloring of $G$, every monochromatic component intersecting $Y$ is contained in $Y'$ so the size is bounded by $g(\lvert Y \rvert)$.

However, this approach for enlarging $Y$ does not work directly when $s \geq 2$, 
since the precolored set might grow too fast when the maximum degree is unbounded. 
We employ the following alternative strategy. Instead of precoloring every vertex that is adjacent to the currently precolored set, only precolor those vertices that are adjacent to at least $s$ currently precolored vertices so  that they forbid one color in $Y$, and for each vertex $v$ that is adjacent to at least 1 but at most $s-1$ currently precolored vertices, we ensure (using a list coloring argument) that in the future $v$ is assigned a color that appears on no precolored neighbor of $v$. This allows us to enlarge $Y$ to obtain a larger precolored set $Y'$, such that in every coloring, every monochromatic component that intersects $Y$ is contained in $Y'$, so it has size less than $g(\lvert Y \rvert)$. \cref{BoundedGrowth} below, which is proved in our companion paper~\citep{LW1}, ensures that the size of the precolored set does not increase too much, which is then used to ensure that the final precolored set $Y'$ has bounded size.

The following definitions formalise these ideas. 
For a graph $G$, a set $X \subseteq V(G)$ and $s\in\mathbb{N}$, define
\begin{align*}
\ngs[G]{X} & := \{v \in V(G)- X:  \lvert N_G(v) \cap X \rvert \geq s\} \text{ and} \\
\nls[G]{X} & := \{v \in V(G)- X:  1 \leq \lvert N_G(v) \cap X \rvert < s\}.
\end{align*}
When the graph $G$ is clear from the context we write $\ngs{X}$ instead of $\ngs[G]{X}$, and similarly for 
$\nls{X}$.

\begin{lemma}[\cite{LW1}] 
\label{BoundedGrowth}
For all $s,t\in\mathbb{N}$, there exists a function $f_{s,t}:\mathbb{N}_0\rightarrow\mathbb{N}_0$ such that for every graph $G$ with no $K_{s,t}$ subgraph, if $X\subseteq V(G)$ then $| N^{\geq s}(X) |  \leq f_{s,t}(\lvert X \rvert)$.
\end{lemma}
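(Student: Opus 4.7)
The plan is to bound $\lvert N^{\geq s}(X) \rvert$ by a standard Kővári–Sós–Turán style double-counting argument, exploiting the fact that the absence of $K_{s,t}$ as a subgraph bounds how many common neighbors any $s$-subset of $X$ can have outside of $X$.

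First, I would observe that if $\lvert X \rvert < s$, then $N^{\geq s}(X) = \emptyset$, so we may define $f_{s,t}(n) = 0$ for $n < s$ and focus on $\lvert X \rvert \geq s$. Next, fix an $s$-subset $S \subseteq X$ and consider the set $W(S) := \{v \in V(G)\setminus X : S \subseteq N_G(v)\}$. If $\lvert W(S) \rvert \geq t$, then the bipartition $(S, W')$ for any $t$-subset $W' \subseteq W(S)$ induces a $K_{s,t}$ subgraph of $G$ (with $S$ on the side of size $s$ and $W'$ on the side of size $t$), contradicting the hypothesis. Hence $\lvert W(S) \rvert \leq t-1$ for every $s$-subset $S$ of $X$.

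Now I would double count pairs $(v, S)$ where $v \in N^{\geq s}(X)$ and $S$ is an $s$-subset of $N_G(v) \cap X$. Every $v \in N^{\geq s}(X)$ contributes at least one such $S$ (since $\lvert N_G(v) \cap X \rvert \geq s$), while for each fixed $s$-subset $S \subseteq X$ the number of contributing $v$'s is exactly $\lvert W(S) \rvert \leq t-1$. Therefore
\[
\lvert N^{\geq s}(X) \rvert \;\leq\; \sum_{v \in N^{\geq s}(X)} \binom{\lvert N_G(v) \cap X \rvert}{s} \;=\; \sum_{S \in \binom{X}{s}} \lvert W(S) \rvert \;\leq\; (t-1)\binom{\lvert X \rvert}{s}.
\]
Setting $f_{s,t}(n) := (t-1)\binom{n}{s}$ (interpreted as $0$ when $n < s$) yields the desired bound.

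This proof is short and does not present a real obstacle; the only subtlety is to remember to allow at most $t-1$ common neighbors (rather than $t$) so that no $K_{s,t}$ is formed, and to use the double-counting inequality in the direction that each vertex contributes at least one (not exactly one) $s$-subset, which is precisely what $v \in N^{\geq s}(X)$ guarantees.
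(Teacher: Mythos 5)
Your proof is correct. The paper itself does not prove \cref{BoundedGrowth}; it is quoted from the companion paper~\citep{LW1}. Your K\H{o}v\'ari--S\'os--Tur\'an double count (each $v\in N^{\geq s}(X)$ contributes at least one $s$-subset $S$ of $N_G(v)\cap X$, while each $s$-subset $S\subseteq X$ has at most $t-1$ common neighbors outside $X$, giving $\lvert N^{\geq s}(X)\rvert\le (t-1)\binom{\lvert X\rvert}{s}$) is the standard argument and is what one would expect~\citep{LW1} to use; it even yields a polynomial bound consistent with the paper's remark following the lemma.
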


When $G$ excludes a fixed minor, the function $f_{s,t}$ in \cref{BoundedGrowth} can be made linear; see \citep{LW1}. This improves the clustering function in all our results, but for ease of presentation we choose not to evaluate explicit clustering functions in this paper. 

All our results rely on the following list coloring setup. For $s,r\in\mathbb{N}$ and $Y_1\subseteq V(G)$, a list-assignment $L$ of a graph $G$ is an {\it $(s,r,Y_1)$-list-assignment} if the following hold:
	\begin{itemize} 
		\item[(L1)] $| L(v) | \in [s+r]$ for every $v \in V(G)$.
		\item[(L2)] $Y_1 = \{v \in V(G): \lvert L(v) \rvert=1\}$.
		\item[(L3)] For every $y \in \nls{Y_1}$, 
			\begin{equation*}\lvert L(y) \rvert = s+r-\lvert N_G(y) \cap Y_1 \rvert, \end{equation*} 
			and $L(y) \cap L(u)=\emptyset$ for every $u \in N_G(y) \cap Y_1$. (Note that 	$\lvert L(y) \rvert \geq r+1$.)
		\item[(L4)] For every $v \in V(G)-N_G[Y_1]$, we have $\lvert L(v) \rvert =s+r$.
		\item[(L5)] For every $v \in V(G)-Y_1$, we have $\lvert L(v) \rvert \geq r+1$.
	\end{itemize}
We use $r=1$ for the bounded treewidth case (\cref{twmain}), $r=2$ for excluded minors (\cref{minorbasic}), and $r=s+1$ for excluded odd minors (\cref{oddminorbasic}). Define an {\it $(s,Y_1)$-list-assignment} to be an $(s,1,Y_1)$-assignment.

Let $G$ be a graph, let $Y_1\subseteq V(G)$, let $s,r \in {\mathbb N}$, and let $L$ be an $(s,r,Y_1)$-list-assignment.
For all $W\subseteq V(G)$ and for every set $F$ of colors with $\lvert F \rvert \leq r$ (not necessarily a subset of $\bigcup_{v \in V(G)}L(v)$), 
a {\it $(W,F)$-progress} of $L$ is a list-assignment $L'$ of $G$ defined as follows:
\begin{itemize}
	\item Let $Y_1':=Y_1 \cup W$. 
	\item For every $y \in Y_1$, let $L'(y):=L(y)$.
	\item For every $y \in Y_1'-Y_1$, let $L'(y)$ be a 1-element subset of $L(y)-F$	 (which exists by (L5)). 
	\item For each $v \in \nls{Y_1'}$, let $L'(v)$ be a subset of $$L(v)-\bigcup_{w \in N_G(v) \cap (W-Y_1)}L'(w)$$ 
	of size $\lvert L(v) \rvert - \lvert N_G(v) \cap (W-Y_1) \rvert$ such that $\lvert L'(v) \cap F \rvert$ is as large as possible. 
	\item For every $v \in V(G)-(Y_1' \cup \nls{Y_1'})$, let $L'(v):=L(v)$.
\end{itemize}
Intuitively speaking, the $(W,F)$-progress is a list assignment where each uncolored vertex in $W$ is assigned a color in its list but not in $F$.

\begin{lemma}
\label{progress basic}
Let $G$ be a graph, $s,r \in {\mathbb N}$, $Y_1 \subseteq V(G)$, and $L$ be an $(s,r,Y_1)$-list-assignment.
If $W\subseteq V(G)$ and $F$ is a set of colors with $\lvert F \rvert \leq r$, then every $(W,F)$-progress $L'$ of $L$ satisfies the following properties:
	\begin{enumerate}
		\item $L'$ is an $(s,r,Y_1 \cup W)$-list-assignment of $G$.
		\item $L'(v) \subseteq L(v)$ for every $v \in V(G)$.
		\item $\{v \in Y_1 \cup W: L'(v) \cap F \neq \emptyset\} = \{v \in Y_1: L(v) \cap F \neq \emptyset\}$.
		\item If $\ngs{Y_1} \subseteq W$, then for every $y \in Y_1 \cup W$ and color $x \in F\cap L'(y)$, 
		we have $\{v \in N_G(y) -(Y_1 \cup W): x \in L'(v)\}=\emptyset$.
		\item For every $v \in V(G)-(Y_1 \cup W)$, we have $L'(v) \cap F = L(v) \cap F$.
	\end{enumerate} 
\end{lemma}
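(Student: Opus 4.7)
The plan is to verify conclusions (2), (3), and (5) directly from the definition of a $(W,F)$-progress, then deduce (4) from axiom (L3) of $L$, and finally establish (1) by a case analysis on the position of each vertex relative to $Y_1' := Y_1 \cup W$. Recall that the definition specifies $L'(v)$ in exactly one way on each of the four disjoint classes $Y_1$, $W - Y_1$, $\nls{Y_1'}$, and $V(G) - (Y_1' \cup \nls{Y_1'})$. Conclusion (2) is immediate: each defining clause expresses $L'(v)$ as $L(v)$, a one-element subset of $L(v) - F$, or a subset of $L(v)$ with some colors removed. Conclusion (3) follows from $L'(y) = L(y)$ for $y \in Y_1$ and $L'(y) \subseteq L(y) - F$ for $y \in W - Y_1$. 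For (5), the key observation is that for $w \in W - Y_1$ we have $L'(w) \subseteq L(w) - F$, so the colors removed when constructing $L'(v)$ for $v \in \nls{Y_1'}$ all lie outside $F$; hence the ``$|L'(v) \cap F|$ as large as possible'' clause retains all of $L(v) \cap F$, yielding $L'(v) \cap F = L(v) \cap F$. Vertices in $V(G) - (Y_1' \cup \nls{Y_1'})$ satisfy $L'(v) = L(v)$ trivially.

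For (4), take $y \in Y_1 \cup W$ with $x \in F \cap L'(y)$; by (3), already established, $y \in Y_1$, so $L'(y) = L(y)$ and $x \in L(y)$. Let $v \in N_G(y) - Y_1'$. Then $v \in N_G(Y_1) - Y_1$, and the hypothesis $\ngs{Y_1} \subseteq W \subseteq Y_1'$ combined with $v \notin Y_1'$ forces $v \in \nls{Y_1}$. Axiom (L3) for $L$ then gives $L(y) \cap L(v) = \emptyset$, so $x \notin L(v)$, and (2) yields $x \notin L'(v)$.

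Conclusion (1) is the main work; it requires re-verifying all five axioms (L1)--(L5) for $L'$ with the enlarged precolored set $Y_1'$, and I would split $V(G) - Y_1'$ into $\nls{Y_1'}$, $\ngs{Y_1'}$, and $V(G) - N_G[Y_1']$. The crucial calculation concerns (L3) and (L5) for $v \in \nls{Y_1'}$. Since $|N_G(v) \cap Y_1'| < s$, a fortiori $|N_G(v) \cap Y_1| < s$, so $v$ lies in either $\nls{Y_1}$ (whence $|L(v)| = s + r - |N_G(v) \cap Y_1|$ by (L3) for $L$) or $V(G) - N_G[Y_1]$ (whence $|L(v)| = s + r$ by (L4) for $L$, with $|N_G(v) \cap Y_1| = 0$). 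Combining either with the defining identity $|L'(v)| = |L(v)| - |N_G(v) \cap (W - Y_1)|$ and the disjoint union $N_G(v) \cap Y_1' = (N_G(v) \cap Y_1) \sqcup (N_G(v) \cap (W - Y_1))$ yields $|L'(v)| = s + r - |N_G(v) \cap Y_1'|$, which is exactly the size required by (L3) and simultaneously gives $|L'(v)| \geq r + 1$ as required by (L5). The disjointness half of (L3) is immediate for $u \in N_G(v) \cap (W - Y_1)$ by the construction of $L'(v)$, and for $u \in N_G(v) \cap Y_1$ it follows from (L3) for $L$ together with (2). Property (L4) for $L'$ reduces to (L4) for $L$ after noting that $v \in V(G) - N_G[Y_1']$ also lies in $V(G) - N_G[Y_1]$ and has no neighbor in $W - Y_1$, so $L'(v) = L(v)$ of size $s + r$. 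Property (L5) for $v \in \ngs{Y_1'}$ follows from (L5) for $L$ since $L'(v) = L(v)$, and (L1) and (L2) are then corollaries. The main obstacle throughout is bookkeeping: in particular, one must rule out the possibility that a vertex in $\nls{Y_1'}$ lies in $\ngs{Y_1}$, which is precisely the step that makes the list-size arithmetic come out correctly.
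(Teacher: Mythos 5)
Your plan tracks the paper's own proof very closely: (2) and (3) from the construction, (4) by showing the offending neighbor must be in $\nls{Y_1}$ and invoking (L3), and (1) by the case split on $\nls{Y_1'}$, $\ngs{Y_1'}$, and $V(G)-N_G[Y_1']$ with the size arithmetic $|L'(v)| = s+r-|N_G(v)\cap Y_1'| \geq r+1$. You correctly flag the key bookkeeping fact that $\nls{Y_1'}\cap\ngs{Y_1} = \emptyset$, which is indeed what makes (L3) and (L5) go through.

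There is, however, a genuine (if small) gap in your argument for (5). You observe that the colors forcibly removed from $L(v)$ (namely $L'(w)$ for $w\in N_G(v)\cap(W-Y_1)$) all lie outside $F$, and conclude that the maximization clause ``retains all of $L(v)\cap F$.'' But the maximization runs over subsets of the prescribed size $|L(v)|-|N_G(v)\cap(W-Y_1)|$, so it can only retain all of $L(v)\cap F$ if that target size is at least $|L(v)\cap F|$. You need the chain $|L'(v)| = s+r-|N_G(v)\cap Y_1'| \geq r+1 > r \geq |F| \geq |L(v)\cap F|$, which the paper spells out explicitly at this point. You do establish $|L'(v)|\geq r+1$ later in your treatment of (1), so the ingredient is on hand, but as written the (5) step is incomplete without it; with that inequality inserted the argument is correct and coincides with the paper's.
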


\begin{proof}
Let $L'$ be a $(W,F)$-progress of $L$.
By construction, Statements 2 and 3 hold. 
Let $Y_1':=Y_1 \cup W$.

Now we prove Statement 4.
Suppose to the contrary that there exist $y \in Y_1'$, $b \in N_G(y) -Y_1'$ and $f \in F\cap L'(b) \cap L'(y)$.
Since $f \not \in L'(q)$ for every $q \in Y_1'-Y_1$, we have $y \in Y_1$.
Since $\ngs{Y_1} \subseteq W$ and $b \not \in W$, we have $\lvert N_G(b) \cap Y_1 \rvert \in [s-1]$.
That is, $b \in \nls{Y_1}$.
However, $f \in L'(b) \cap L'(y) \subseteq L(b) \cap L(y)$, contradicting that $L$ satisfies (L3).
Hence Statement 4 holds.

Now we prove Statement 5.
Suppose there exists $a \in V(G)-Y_1'$ such that $L'(a) \cap F \neq L(a) \cap F$.
If $a \not\in \nls{Y_1'}$, then $L'(a)=L(a)$, a contradiction.
Thus $a \in \nls{Y_1'}$.
Since $Y_1' \supseteq Y_1$, either $a \in \nls{Y_1}$ or $a \in V(G)-N_G[Y_1]$.
In either case, $\lvert L(a) \rvert = s+r-\lvert N_G(a) \cap Y_1 \rvert$ by (L3) and (L4).
Hence
\begin{align*}
\lvert L'(a) \rvert = \lvert L(a) \rvert - \lvert N_G(a) \cap (W-Y_1) \rvert 
& = s+r-\lvert N_G(a) \cap Y_1 \rvert - \lvert N_G(a) \cap (W-Y_1) \rvert\\ 
& = s+r-\lvert N_G(a) \cap Y_1' \rvert \\
& \geq r+1 \\ 
& > \lvert F \rvert \\
& \geq \lvert L(a) \cap F \rvert. 
\end{align*}
Since $L'(a)$ is chosen so that $\lvert L'(a) \cap F \rvert$ is maximum among all subsets of \linebreak $L(v)- \bigcup_{w \in N_G(v) \cap (W-Y_1)}L'(w)$ of size $\lvert L(v) \rvert - \lvert N_G(v) \cap (W-Y_1) \rvert$, and $\{y \in W-Y_1: L'(y) \cap F \neq \emptyset\}=\emptyset$, we know $L'(a)$ must contain $L(a) \cap F$.
Since $L'(a)\subseteq L(a)$, we have $L'(a) \cap F \subseteq L(a) \cap F$.
Hence $L'(a) \cap F = L(a) \cap F$, a contradiction.
So Statement 5 holds.

To complete the proof it suffices to show that $L'$ is an $(s,r,Y_1')$-list-assignment of $G$.

We first show that $L'$ satisfies (L3).
Let $v \in \nls{Y_1'}$.
Since $Y_1 \subseteq Y_1'$, either $N_G(v) \cap Y_1 = \emptyset$ or $v \in \nls{Y_1}$.
If $N_G(v) \cap Y_1 = \emptyset$, then 
\begin{equation*}
\lvert L'(v) \rvert = \lvert L(v) \rvert - \lvert N_G(v) \cap (W-Y_1) \rvert = s+r - \lvert N_G(v) \cap Y_1' \rvert \geq r+1,
\end{equation*} 
and $L'(v)$ is a subset of $L(v)-\{L'(w): w \in N_G(v) \cap Y_1'\}$, so $L'(v) \cap L'(u)=\emptyset$ for every $u \in N_G(v) \cap Y_1'$.
So we may assume $v \in \nls{Y_1}$.
Since $L$ satisfies (L3), we have
$\lvert L(v) \rvert = s+r-\lvert N_G(v) \cap Y_1 \rvert \geq 2$ and $L(v) \cap L(u) = \emptyset$ for every $u \in N_G(v) \cap Y_1$.
Hence 
\begin{align*}
\lvert L'(v) \rvert & = \lvert L(v) \rvert - \lvert N_G(v) \cap (W-Y_1) \rvert\\ 
& = s+r-\lvert N_G(v) \cap Y_1 \rvert - \lvert N_G(v) \cap (W-Y_1) \rvert \\
& = s+r - \lvert N_G(v) \cap (W \cup Y_1) \rvert \\ 
&= s+r - \lvert N_G(v) \cap Y_1' \rvert.
\end{align*}
Furthermore, $L'(v) \subseteq L(v)-\{L'(w): w \in W-Y_1\}$ and $L'(u)=L(u)$ for every $u \in Y_1$, so $L'(v) \cap L'(u) = \emptyset$ for every $u \in N_G(v) \cap (W \cup Y_1) = N_G(v) \cap Y_1'$.
Hence $L'$ satisfies (L3).

Let $x$ be a vertex in $V(G)-N_G[Y_1']$.
Since $Y_1 \subseteq Y_1'$, we have $x \in V(G)-N_G[Y_1]$.
Since $L$ satisfies (L4), we have $\lvert L(x) \rvert = s+r$.
Since $Y_1' \cup N_G(Y_1') \supseteq Y_1' \cup \nls{Y_1'}$, we have $L'(x)=L(x)$ which has size $s+r$.
This shows that $L'$ satisfies (L4).

Let $z \in \ngs{Y_1'}$.
Then $\lvert L'(z) \rvert = \lvert L(z) \rvert$.
Since $z \not \in Y_1$ and $L$ satisfies (L2)--(L5), we have $\lvert L'(z) \rvert \geq r+1$. 
Since $L'$ satisfies (L3) and (L4), it implies that $L'$ satisfies (L5).
Since $L'$ satisfies (L3)--(L5), $L'$ satisfies (L1) and (L2).
Therefore $L'$ is an $(s,r,Y_1')$-list-assignment.
\end{proof}

Let $G$ be a graph, $\eta\in\mathbb{N}$, and $g$ a nondecreasing function.
Let $L$ be a list-assignment of $G$, and let $Y_1:=\{v \in V(G): \lvert L(v) \rvert=1\}$.
Then an $L$-coloring $c$ is {\it $(\eta,g)$-bounded} if:
	\begin{itemize}
		\item the union of the monochromatic components with respect to $c$ intersecting $Y_1$ contains at most $\lvert Y_1 \rvert^2 g(\lvert Y_1 \rvert)$ vertices, and 
		\item every monochromatic component with respect to $c$ contains at most $\eta^2 g(\eta)$ vertices.
	\end{itemize}

\begin{lemma} \label{enlarge Y}
For all $s,t,k\in\mathbb{N}$, there exist a number $\eta>k$ and a nondecreasing function $g$ with domain ${\mathbb N_0}$ and with $g(0) \geq \eta$ such that if $G$ is a graph with no $K_{s,t}$ subgraph, $r \in {\mathbb N}$, $Y_1$ is a subset of $V(G)$ with $\lvert Y_1 \rvert \leq \eta$, $F$ is a set of colors with $\lvert F \rvert \leq r-1$, and $L$ is an $(s,r,Y_1)$-list-assignment of $G$ such that $\{y \in Y_1: x \in L(y)\}$ is a stable set in $G$ for every $x \in F$, then one of the following holds:
	\begin{enumerate}
		\item There exists an $(\eta,g)$-bounded $L$-coloring of $G$ such that for every $x \in F$, the set of vertices colored $x$ is a stable set in $G$.
		\item $\lvert Y_1 \rvert >k$.
		\item For every color $\ell$, there exist a subset $Y_1'$ of $V(G)$ with $\eta \geq \lvert Y_1' \rvert > \lvert Y_1 \rvert$ and an $(s,r,Y_1')$-list-assignment $L'$ of $G$ with $L'(v) \subseteq L(v)$ for every $v \in V(G)$, such that:
			\begin{enumerate}
				\item there does not exist an $(\eta,g)$-bounded $L'$-coloring of $G$ such that for every $x \in F$, the set of vertices colored $x$ is a stable set in $G$, 
				\item for every $L'$-coloring of $G$, every monochromatic component intersecting $Y_1$ is contained in $G[Y_1']$,
				\item $\{v \in Y_1: F \cap L(v) \neq \emptyset\} = \{v \in Y_1': F \cap L'(v) \neq \emptyset\}$, 
				\item for every $x \in F \cup \{\ell\}$ and $y \in Y_1'$ with $x \in L'(y)$, we have $\{v \in N_G(y)-Y_1': x \in L'(v)\}=\emptyset$, and 
				\item for every $v \in V(G)-Y_1'$, we have $L'(v) \cap F = L(v) \cap F$.
			\end{enumerate}
		\item $Y_1 \neq \emptyset$, $N_G(Y_1)=\emptyset$, and there does not exist an $(\eta,g)$-bounded $L|_{G-Y_1}$-coloring of $G-Y_1$ such that for every $x \in F$, the set of vertices colored $x$ is a stable set in $G-Y_1$. 
	\end{enumerate}
\end{lemma}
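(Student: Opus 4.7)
The plan is to iterate the progress construction of \cref{progress basic}. Set $\eta := k + f_{s,t}(k) + 1$ with $f_{s,t}$ from \cref{BoundedGrowth}, and let $g:\mathbb{N}_0 \to \mathbb{N}_0$ be any nondecreasing function with $g(0) \geq \eta$. If $\lvert Y_1 \rvert > k$, outcome 2 holds, so assume $\lvert Y_1 \rvert \leq k$; we may also assume outcomes 1 and 4 both fail, and aim to produce outcome 3 for an arbitrary fixed color $\ell$.

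We build a nested sequence $Y_1 = Z_0 \subseteq Z_1 \subseteq \dots \subseteq Z_{j^\ast}$ together with list-assignments $L = L_0, L_1, \dots, L_{j^\ast}$, where $L_{j+1}$ is the $(W_j, F \cup \{\ell\})$-progress of $L_j$ and $Z_{j+1} := Z_j \cup W_j$. For $j \geq 1$, set $W_j := \ngs{Z_j}$ and terminate as soon as $W_j = \emptyset$. For $j = 0$: if $\ngs{Y_1} \neq \emptyset$, set $W_0 := \ngs{Y_1}$; if $Y_1 = \emptyset$ and $V(G) \neq \emptyset$, let $W_0 := \{v\}$ for any $v \in V(G)$ (the case $V(G) = \emptyset$ makes outcome 1 trivial); if $Y_1 \neq \emptyset$, $N_G(Y_1) \neq \emptyset$, and $\ngs{Y_1} = \emptyset$, let $W_0 := \{v\}$ for any $v \in \nls{Y_1}$. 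The remaining subcase $Y_1 \neq \emptyset$ with $N_G(Y_1) = \emptyset$ does not arise, for then $G[Y_1]$ is disconnected from $G - Y_1$ and the forced $L|_{Y_1}$-coloring composed with any good coloring of $G - Y_1$ would yield one for $G$, so either outcome 1 or outcome 4 must hold. Additionally terminate when $\lvert Z_{j+1} \rvert > k$. Since $\lvert Z_j \rvert$ strictly increases whenever $W_j \neq \emptyset$, the iteration halts within $k + 1$ steps, and \cref{BoundedGrowth} gives $\lvert Z_{j^\ast} \rvert \leq k + f_{s,t}(k) < \eta$. By \cref{progress basic}(1) applied inductively, each $L_j$ is an $(s,r,Z_j)$-list-assignment, and the $F$-stability hypothesis is preserved because every new vertex of $W_j$ receives a singleton list avoiding $F \cup \{\ell\}$.

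Set $Y_1' := Z_{j^\ast}$ and $L' := L_{j^\ast}$; since $W_0 \neq \emptyset$ we have $\lvert Y_1' \rvert > \lvert Y_1 \rvert$. Outcome 3(a) follows because $L' \subseteq L$ by \cref{progress basic}(2), so any $L'$-coloring is also an $L$-coloring, contradicting the failure of outcome 1. Outcome 3(c) holds since each $v \in Y_1' \setminus Y_1$ has a singleton list disjoint from $F$, while $Y_1$-lists are unchanged. Outcome 3(d) follows from \cref{progress basic}(4) applied at each step (its hypothesis $\ngs{Z_j} \subseteq W_j$ holds by construction), combined with $V(G) \setminus Y_1' \subseteq V(G) \setminus Z_j$ and monotonicity of lists. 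Outcome 3(e) is an iterated application of \cref{progress basic}(5).

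The main obstacle is outcome 3(b): every monochromatic component of $G$ under any $L'$-coloring meeting $Y_1$ must lie in $G[Y_1']$. Suppose otherwise, and take an edge $uu'$ on such a component with $u \in Y_1'$, $u' \notin Y_1'$, and both colored $c$. Let $j_0$ be the smallest index with $u \in Z_{j_0}$; since $\lvert L_{j_0}(u) \rvert = 1$ and $u$ is colored $c$, $L_{j_0}(u) = \{c\}$. Because $u' \notin Z_{j^\ast} \supseteq Z_{j_0+1} = Z_{j_0} \cup W_{j_0}$, $u' \in N_G(u) \subseteq N_G(Z_{j_0})$, and $\ngs{Z_{j_0}} \subseteq W_{j_0}$ by our choice of $W_{j_0}$, we conclude $u' \in \nls{Z_{j_0}}$. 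Then (L3) for $L_{j_0}$ gives $L_{j_0}(u') \cap L_{j_0}(u) = \emptyset$, so $c \notin L_{j_0}(u') \supseteq L_{j^\ast}(u') = L'(u')$, contradicting that $u'$ is colored $c$. Since $\ell$ was arbitrary, this verifies outcome 3 and completes the proof.
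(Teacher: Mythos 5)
Your construction mirrors the paper's in outline (iterate a progress, grow $Y_1$ to $Y_1'$, bound the size via \cref{BoundedGrowth}), but it differs in two crucial ways, and the second difference opens a genuine gap in your argument for outcome~3(b).

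The first difference: the paper forbids a \emph{fresh color} $\ell_i$ at step $i$ (where $\ell_i$ is the color of $y_i\in Y_1$, and $\ell_{\lvert Y_1\rvert+1}=\ell$), whereas you forbid the fixed set $F\cup\{\ell\}$ at every step. The second difference: the paper always runs exactly $\lvert Y_1\rvert+1$ progress steps, whereas you terminate either when $\ngs{Z_j}=\emptyset$ or as soon as $\lvert Z_{j+1}\rvert>k$. The second stopping rule is where the trouble lies. Your argument for 3(b) picks an edge $uu'$ with $u\in Y_1'$, $u'\notin Y_1'$, both colored $c$, lets $j_0$ be minimal with $u\in Z_{j_0}$, and then asserts ``Because $u'\notin Z_{j^\ast}\supseteq Z_{j_0+1}$\dots we conclude $u'\in\nls{Z_{j_0}}$.'' This presupposes $j_0<j^\ast$ so that $Z_{j_0+1}$ exists. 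If $u$ was added in the final step (so $j_0=j^\ast$), the chain of containments requires $\ngs{Z_{j^\ast}}=\emptyset$ to conclude $u'\in\nls{Z_{j^\ast}}$; but if the iteration halted because $\lvert Z_{j^\ast}\rvert>k$ rather than because $\ngs{Z_{j^\ast}}=\emptyset$, there may well be $u'\in\ngs{Z_{j^\ast}}\cap N_G(u)$ with $u'\notin Y_1'$, and then (L3) does not apply to $u'$, so nothing forbids $c\in L'(u')$. Concretely, when $G$ is a long path, $s=1$, and $Y_1$ consists of $k$ vertices near one end, the sets $\ngs{Z_j}$ never empty out before the size cap is hit, and a monochromatic component can cross the frontier $Z_{j^\ast}$. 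Simply dropping your size cap does not help, since then $\lvert Z_{j^\ast}\rvert$ is no longer bounded by any $\eta$ depending only on $s,t,k$. The paper sidesteps this precisely by forbidding $\ell_i$ in $\ngs{U_{i-1}}$ at step $i$: since the component $M_i$ through $y_i$ is colored $\ell_i$, every vertex freshly added at step $i$ has $\ell_i$ removed from its list, so $M_i\subseteq U_{i-1}$ holds regardless of whether $\ngs{U_{\lvert Y_1\rvert+1}}$ is empty. That is the idea your proof is missing.

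A smaller point: your justification of 3(a) (``any $L'$-coloring is also an $L$-coloring, contradicting the failure of outcome~1'') glosses over the fact that $(\eta,g)$-boundedness is relative to the singleton set of the list-assignment, which is $Y_1'$ for $L'$ but $Y_1$ for $L$, with different thresholds $\lvert Y_1'\rvert^2 g(\lvert Y_1'\rvert)$ versus $\lvert Y_1\rvert^2 g(\lvert Y_1\rvert)$. To pass from a bounded $L'$-coloring to a bounded $L$-coloring you need 3(b) together with $\lvert Y_1'\rvert\le g(\lvert Y_1\rvert)$, as the paper explicitly checks in the final paragraph of its proof; relatedly, taking $g$ to be an arbitrary nondecreasing function with $g(0)\ge\eta$ is too loose, since the statement asserts existence of a specific $g$ and the inequality $\lvert Y_1'\rvert\le g(\lvert Y_1\rvert)$ must actually hold.
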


\begin{proof}
Let $f$ be the function $f_{s,t}$ in \cref{BoundedGrowth}. 
Let $h_{-1}:\mathbb{N}_0\to \mathbb{N}_0$ be the identity function, and let $h_0:\mathbb{N}_0\to \mathbb{N}_0$ be the function defined by $h_0(x):=x+f(x)$ for $x\in\mathbb{N}_0$. 
For $i \geq 1$, let $h_i:\mathbb{N}_0\to \mathbb{N}_0$ be the function defined by $h_i(x):=h_0(h_{i-1}(x))$  for $x\in\mathbb{N}_0$.
Let $\eta:=\max\{h_{k}(k), k+1\}$.
Let $g:\mathbb{N}_0\to \mathbb{N}_0$ to be the function defined by $g(x):=h_{x}(x)+\eta$ for  $x\in\mathbb{N}_0$.

Suppose that Statements 1, 2 and 4 do not hold. 
Suppose to the contrary that Statement 3 does not hold for some color $\ell$. 

First suppose that $Y_1=\emptyset$.
Define $R$ to be a $(\{v\},F \cup \{\ell\})$-progress of $L$, where $v$ is a vertex of $G$.
If $G$ has an $(\eta,g)$-bounded $R$-coloring such that for every $x \in F$, the set of vertices colored $x$ is a stable set, then $G$ has an $(\eta,g)$-bounded $L$-coloring such that for every $x \in F$, the set of vertices colored $x$ is a stable set, so Statement 1 holds, a contradiction.
So $G$ has no $(\eta,g)$-bounded $R$-coloring such that for every $x \in F$, the set of vertices colored $x$ is a stable set, then Statement 3 holds if we take $Y_1'=\{v\}$ and $L'=R$ by \cref{progress basic}.

Hence $Y_1 \neq \emptyset$.
Suppose that $N_G(Y_1)=\emptyset$.
If $V(G)=Y_1$, then there exists an $(\eta,g)$-bounded $L$-coloring of $G$ such that for every $x \in F$, the set of vertices colored $x$ is a stable set, a contradiction.
So $V(G) \neq Y_1$.
Define $G'=G-Y_1$.
Since Statement 4 does not hold, there exists an $(\eta,g)$-bounded $L|_{G'}$-coloring of $G'$ such that for every $x \in F$, the set of vertices colored $x$ is a stable set.
We can further color the vertices in $Y_1$ by the unique element in their lists to obtain an $(\eta,g)$-bounded $L$-coloring of $G$ such that for every $x \in F$, the set of vertices colored $x$ is a stable set, a contradiction.

Hence $Y_1 \neq \emptyset$ and $N_G(Y_1) \neq \emptyset$.
Let $Y_2 = \nls{Y_1}$.

Suppose that $\ngs{Y_1}=\emptyset$.
So $Y_2=N_G(Y_1)$.
Since $N_G(Y_1) \neq \emptyset$, there exists a vertex $z \in Y_2$.
Define $Y_1':=Y_1 \cup \{z\}$ and define $L'$ to be a $(\{z\},F \cup \{\ell\})$-progress of $L$. 
Since Statement 2 does not hold, $\lvert Y_1' \rvert = \lvert Y_1 \rvert+1 \leq k+1 \leq \eta$.
Since $Y_2=N_G(Y_1)$ and by (L3) $L(v) \cap L(u)=\emptyset$ for every $v \in Y_2$ and $u \in N_G(v) \cap Y_1$, 
the union of the monochromatic components with respect to any $L'$-coloring of $G$ intersecting $Y_1$ is contained in $G[Y_1]$.
Since Statement 3 does not hold, by \cref{progress basic}, there exists an $(\eta,g)$-bounded $L'$-coloring $c'$ of $G$ such that for every $x \in F$, the set of vertices colored $x$ is a stable set.
In particular, the union of the monochromatic components with respect to $c'$ intersecting $Y_1$ is contained in $G[Y_1]$.
So $c'$ is an $(\eta,g)$-bounded $L$-coloring of $G$, and Statement 1 holds, a contradiction.

Hence $\ngs{Y_1}\neq\emptyset$.
Denote $Y_1$ by $\{y_1,y_2,\dots,y_{\lvert Y_1 \rvert}\}$.
For every $i \in [\lvert Y_1 \rvert]$, let $\ell_i$ be the unique element of $L(y_i)$; let $\ell_{\lvert Y_1 \rvert+1}=\ell$.
Define $L_0=L$ and $U_0=Y_1$.
For $i \geq 1$, let $L_i$ be a $(\ngs{U_{i-1}},\{\ell_i\} \cup F)$-progress and define $U_i := \{v \in V(G): \lvert L_i(v) \rvert=1\}$.
Define $L^*:=L_{\lvert Y_1 \rvert+1}$ and $Y_1^* := U_{\lvert Y_1 \rvert+1}$.

\begin{claim}\label{EnlargeClaim1}
$\lvert Y_1^* \rvert \leq h_{\lvert Y_1 \rvert}(\lvert Y_1 \rvert)$.
\end{claim}

\begin{proof}
We shall prove that $\lvert U_i \rvert \leq h_{i-1}(\lvert Y_1 \rvert)$ by induction on $i\geq 0$.
When $i=0$, we have $\lvert U_0 \rvert = \lvert Y_1 \rvert = h_{-1}(\lvert Y_1 \rvert)$.
Now assume that $i \geq 1$ and the claim holds for all smaller $i$.
By induction and \cref{BoundedGrowth}, 
\begin{align*}
\lvert U_i \rvert 
 = \lvert U_{i-1} \rvert + |\ngs{U_{i-1}}|
  \leq \lvert U_{i-1} \rvert+f(\lvert U_{i-1} \rvert) 
 = h_0(\lvert U_{i-1} \rvert) 
& \leq h_0(h_{i-2}(\lvert Y_1 \rvert)) \\
& = h_{i-1}(\lvert Y_1 \rvert).
\end{align*}
The case $i=|Y_1|+1$  proves the claim.
\end{proof}

Since Statement 2 does not hold, $\lvert Y_1 \rvert \leq k$.
By \cref{EnlargeClaim1}, $\lvert Y_1^* \rvert \leq h_{k}(k) \leq \eta$.
Recall that we proved that $\ngs{U_0} \neq \emptyset$.
So $\lvert Y_1^* \rvert > \lvert Y_1 \rvert$.
And by \cref{progress basic}, $\{v \in Y_1: F \cap L(v)\neq \emptyset\} = \{v \in Y_1^*: F \cap L^*(v) \neq \emptyset\}$.

\begin{claim}\label{EnlargeClaim2}
For every $L^*$-coloring $c$ of $G$, every monochromatic component with respect to $c$ intersecting $Y_1$ is contained in $Y_1^*$.
\end{claim}

\begin{proof}
For $i\in[|Y_1|]$, let $M_i$ be the monochromatic component with respect to $c$ containing $y_i$.
We shall prove that $V(M_i) \subseteq U_{i-1}$ for every $i\in[\lvert Y_1 \rvert]$.

For $i\in[|Y_1|]$, note that $U_i-U_{i-1}= \ngs{U_{i-1}}$.
Since $L_i$ is an $( U_i-U_{i-1}, \{\ell_i\} \cup F)$-progress, $\ell_i \not \in L_i(u)$ for every $u \in U_i-U_{i-1}$, and for every $v \in \nls{U_{i-1}}$, 
either $v \not \in N_G(V(M_i) \cap U_{i-1})$ or $\ell_i \not \in L_i(v)$.
Since $L_i(v) \supseteq L^*(v)$ for every $v \in V(G)$, we have $V(M_i) \cap N_G(U_{i-1})=\emptyset$ since $M_i$ is connected.
Since $M_i$ is connected and $V(M_i) \cap U_{i-1} \supseteq V(M_i) \cap U_0 \neq \emptyset$, we have $V(M_i) \subseteq U_{i-1}$.

Every monochromatic component $M$ with respect to $c$ intersecting $Y_1$ equals $M_j$ for some $j \in [\lvert Y_1 \rvert]$, 
so $V(M) \subseteq U_{j-1} \subseteq Y_1^*$.
\end{proof}

Since $L^*$ is an $(\ngs{U_{\lvert Y_1 \rvert}},\{\ell\} \cup F)$-progress of $L_{\lvert Y_1 \rvert}$, by \cref{progress basic}, for every $x \in F \cup \{\ell\}$ and $y \in Y_1^*$ with $x \in L^*(y)$, we have $\{v \in N_G(y) - Y_1^*: x \in L^*(v)\}=\emptyset$.
Since Statement 3 does not hold, by \cref{EnlargeClaim2}, 
there exists an $(\eta,g)$-bounded $L^*$-coloring $c^*$ of $G$ such that for every $x \in F$, the set of vertices colored $x$ is a stable set.
So every monochromatic component with respect to $c^*$ contains at most $\eta^2 g(\eta)$ vertices.
By \cref{EnlargeClaim1,EnlargeClaim2}, the union of the monochromatic components with respect to $c^*$ intersecting $Y_1$ contains at most $\lvert Y_1^* \rvert \leq h_{\lvert Y_1 \rvert}(\lvert Y_1 \rvert) \leq g(\lvert Y_1 \rvert)$ vertices.
Therefore, $c^*$ is an $(\eta,g)$-bounded $L$-coloring of $G$ such that for every $x \in F$, the set of vertices colored $x$ is a stable set. So Statement 1 holds. This contradiction proves the lemma.
\end{proof}

\subsection{Layered Treewidth}

We now take an excursion to introduce a tool used in our main proofs. 
A \emph{layering} of  graph $G$ is a partition $(V_1,\dots,V_n)$ of $V(G)$ such that for each edge $vw\in E(G)$ there exists $i\in[n-1]$ such that $v,w\in V_i\cup V_{i+1}$. The \emph{layered treewidth} of a graph $G$ is the minimum integer $\ell$ such that $G$ has a tree decomposition $(T,\mathcal{X}=(X_x:x\in V(T)))$ and a layering $(V_1,\dots,V_n)$, such that $|X_x\cap V_i|\leq\ell$ for each $x\in V(T)$ and $i\in[n]$. 

Layered treewidth was introduced by \citet{DMW17}. They proved that every planar graph has layered treewidth at most 3;  more generally, that every graph with Euler genus at most $g$ has layered treewidth at most $2g+3$; and most generally, that a minor-closed class has bounded layered treewidth if and only if it excludes some apex graph as a minor. Several interesting non-minor-closed classes also have bounded layered treewidth \citep{DEW17,BDDEW18,DJMNW18}.

In our companion paper~\cite{LW1}, we prove that graphs of bounded layered treewidth and with no $K_{s,t}$ subgraph are $(s+2)$-colorable with bounded clustering. In fact, we prove the following stronger result.

\begin{theorem}[\citep{LW1}]
\label{ltwmain}
For all $s,t,w,\xi\in\mathbb{N}$ there exists $\eta\in\mathbb{N}$ such that if $G$ is a graph with no $K_{s,t}$ subgraph such that $G-Z$ has layered treewidth at most $w$ for some $Z \subseteq V(G)$ with $\lvert Z \rvert \leq \xi$, then $G$ is $(s+2)$-colorable with clustering $\eta$.
\end{theorem}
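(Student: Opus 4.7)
My plan is to prove a stronger list-colouring statement by induction, using the $(s,2,Y_1)$-list-assignment framework from \cref{Setup}. Specifically, for suitable $k,\eta \in \mathbb{N}$ and a nondecreasing $g: \mathbb{N}_0 \to \mathbb{N}_0$ (all depending on $s,t,w,\xi$), I would prove: for every graph $G$ as in \cref{ltwmain}, every $Y_1 \subseteq V(G)$, and every $(s,2,Y_1)$-list-assignment $L$ of $G$, there exists an $(\eta,g)$-bounded $L$-colouring of $G$. Applying this with $Y_1 = \emptyset$ (so every list has size exactly $s+2$) recovers \cref{ltwmain} with clustering $\eta^2 g(\eta)$.

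Suppose for contradiction that the strengthened statement fails, and choose a counterexample $(G, L, Y_1)$ with $|V(G)|$ minimum and, subject to that, with $|Y_1|$ maximum. Apply \cref{enlarge Y} with $F = \emptyset$ and $r = 2$. Outcome~1 directly contradicts $(G, L, Y_1)$ being a counterexample. Outcome~4 produces the smaller counterexample $(G - Y_1, L|_{G - Y_1}, \emptyset)$ on fewer vertices (the hypothesis $N_G(Y_1) = \emptyset$ forces $\nls{Y_1} = \emptyset$, so every list in $G - Y_1$ still has size $s+2$, and layered treewidth is subgraph-monotone), contradicting the minimality of $|V(G)|$. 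Outcome~3 produces a counterexample $(G, L', Y_1')$ on the same vertex set with $|Y_1'| > |Y_1|$, contradicting the maximality of $|Y_1|$. Hence outcome~2 must hold, i.e., $|Y_1| > k$.

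In this large-$Y_1$ regime I would exploit the layered treewidth structure of $G - Z$ to find a bounded-order separation of $G$ that splits $Y_1$ in a balanced fashion. Let $(T, \mathcal{X})$ with layering $(V_1, \ldots, V_n)$ of $G - Z$ witness layered treewidth at most $w$. A centroid argument on $T$ weighted by $x \mapsto |X_x \cap Y_1|$ produces a node $x^* \in V(T)$ whose removal from $T$ splits $Y_1 \setminus Z$ into groups of size at most $|Y_1 \setminus Z|/2$, corresponding to a separation $(A,B)$ of $G$ with $V(A \cap B) \subseteq X_{x^*} \cup Z$. Since $Y_1 \setminus Z$ occupies at most $|Y_1|$ layers and each bag meets each layer in at most $w$ vertices, the intersection of $X_{x^*}$ with the layers actually used by $Y_1$ (together with a bounded band of ``buffer'' layers to account for the fact that edges of $G - Z$ only span pairs of consecutive layers) yields a bounded-size effective separator, complemented by $Z$ and by the set $\ngs{Y_1}$ whose size is controlled via \cref{BoundedGrowth}. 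I would then precolour this separator via a $(W, F)$-progress (\cref{progress basic}) and apply the minimality induction hypothesis to $A$ and $B$, concatenating the resulting colourings to contradict $(G,L,Y_1)$ being a counterexample.

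The main obstacle is that the bag $X_{x^*}$ may span many layers, so the nominal separator $X_{x^*} \cup Z$ is not a priori of bounded size. Handling this requires arguing that only the ``relevant'' layers contribute to monochromatic components touching $Y_1$: vertices of $X_{x^*}$ in layers disjoint from $Y_1$, and with fewer than $s$ neighbours already in $Y_1$, retain list size at least $r+1 = 3$ after any $(W,F)$-progress, giving enough flexibility (in the spirit of the proof of \cref{enlarge Y}) to absorb them into one side of the separation without creating large monochromatic components bridging the two sides. Vertices with at least $s$ neighbours in $Y_1$ are bounded in number by \cref{BoundedGrowth} and must be placed in the precoloured separator explicitly. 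Calibrating $k$ larger than this bounded-separator function of $w, \xi, s, t$ closes the induction and yields the final contradiction.
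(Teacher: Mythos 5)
Note first that this paper does not prove \cref{ltwmain} itself --- it is imported as a black box from the companion paper \citep{LW1}, where the argument goes through the $(s,\V)$-compatible list-assignment machinery encoded here in \cref{bounded layered tw}: one of the $s+2$ colors is forbidden on each layer according to its residue modulo $s+2$, so that every monochromatic component is confined to at most $s+1$ consecutive layers, and within such a window the graph (away from $Z$) has honest treewidth at most $(s+1)w-1$. Your plan discards exactly this layer-to-color coupling, and that is where it breaks.

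The gap is in the large-$|Y_1|$ case. Bounded layered treewidth does not bound bag sizes: a bag $X_{x^*}$ may meet arbitrarily many layers and so be arbitrarily large (planar graphs have layered treewidth $3$ but unbounded treewidth and, for the same reason, tangles of unbounded order). Your centroid separation $(A,B)$ with $V(A\cap B)\subseteq X_{x^*}\cup Z$ therefore has unbounded order, and you only manage to bound its ``relevant'' part, the intersection with layers near $Y_1$. The remaining piece of $X_{x^*}$ is not precolored, is not split cleanly between $A$ and $B$, and nothing you write down controls it. Observing that those vertices retain lists of size at least $3$ does not prevent a large monochromatic component \emph{disjoint from $Y_1$} from threading through the uncontrolled part of $X_{x^*}$ and bridging the two sides; \cref{coloring or tangle} is engineered for separations of order less than $\theta$ precisely so the precolored boundary $Y_A=(Y_1\cap V(A))\cup V(A\cap B)$ stays bounded, and without that your induction hypothesis on $A$ and $B$ does not apply. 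More structurally, the tangle route used for \cref{twmain} terminates only because bounded-treewidth graphs admit no tangle of order $w+2$; bounded-layered-treewidth graphs do admit tangles of unbounded order, so neither the separator argument nor the tangle contradiction closes. What is missing is a device --- such as forbidding one of the $s+2$ colors per residue class of layers --- that traps every monochromatic component inside a bounded band of layers, after which the problem genuinely reduces to bounded treewidth and your machinery would apply.
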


We actually need the following more precise result that fits our list coloring setup. 
Let $G$ be a graph and $Z\subseteq V(G)$. A {\it $Z$-layering} $\V$ of $G$ is an ordered partition $(V_1,V_2,\dots)$ of $V(G)-Z$ such that for every edge $e$ of $G-Z$, there exists $i\in\mathbb{N}$ such that both endpoints of $e$ are contained in $V_i \cup V_{i+1}$. For a tree decomposition $(T,\X)$ of $G$, the {\it $\V$-width} is 
\begin{equation*}
\max_{i\in\mathbb{N}} \max_{t \in V(T)}\lvert X_t \cap V_i \rvert.\end{equation*}
Let $s\in\mathbb{N}$. 
A list-assignment $L$ of $G$ is {\it $(s,\V)$-compatible} if:
\begin{itemize}
	\item $L(v) \subseteq [s+2]$ for every $v \in V(G)$, and
	\item $i \not \in L(v)$ for every $i \in [s+2]$ and $v \in \bigcup (V_j: j \equiv i \pmod{s+2})$.
\end{itemize}
For $Y_1\subseteq V(G)$, we say that $(Y_1,L)$ is a {\it $\V$-standard pair} if $L$ is an $(s,1,Y_1)$-list-assignment and is $(s,\V)$-compatible. 

\begin{theorem}[\cite{LW1}] \label{bounded layered tw}
For all $s,t,w,k,\xi\in\mathbb{N}$, there exists $\eta^*\in\mathbb{N}$ such that if $G$ is a graph with no $K_{s.t}$ subgraph, $Z$ is a subset of $V(G)$ with $\lvert Z \rvert \leq \xi$, $\V$ is a $Z$-layering of $G$, $(T,\X)$ is a tree decomposition of $G-Z$ with $\V$-width at most $w$, $Y_1$ is a subset of $V(G)$ with $\lvert Y_1 \rvert \leq k$, $L$ is an $(s,\V)$-compatible list-assignment of $G$ such that $(Y_1,L)$ is a $\V$-standard pair, then there exists an $L$-coloring of $G$ with clustering $\eta^*$. 
\end{theorem}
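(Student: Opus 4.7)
The plan is to induct on $\lvert V(G) \setminus Y_1\rvert$, with \cref{enlarge Y} driving each step. The base case $V(G)=Y_1$ is immediate: $L$ itself is a valid $L$-coloring with every monochromatic component of size at most $\lvert Y_1\rvert \le k$. For the inductive step, first apply \cref{enlarge Y} with $r=1$, $F=\emptyset$, and a growth parameter $k'$ chosen large in terms of $s,t,w,k,\xi$. Outcome~1 yields the required $L$-coloring directly; outcome~4 lets us delete the isolated $Y_1$ and induct on $G-Y_1$ (re-adding the monochromatic singletons for $Y_1$ afterwards); and outcome~3 produces a strictly larger precolored set $Y_1' \supsetneq Y_1$ together with an $(s,1,Y_1')$-list-assignment $L' \subseteq L$ which is automatically still $(s,\V)$-compatible, because condition~(ii) of compatibility is inherited by sublists. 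Induction then applies directly to $(G, Y_1', L')$. All that remains is outcome~2, where $\lvert Y_1\rvert$ has grown past $k'$.

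In the remaining large-$Y_1$ regime the plan is to exploit the tree decomposition $(T, \X)$ of $G-Z$ together with the $\V$-width bound. Pick a balanced node $t \in V(T)$ so that each component of $T-t$ sees strictly fewer than $\lvert Y_1\rvert$ vertices of $Y_1$. Although $X_t$ may meet unboundedly many layers, $\lvert X_t \cap V_j \rvert \le w$ holds for every $j$, so each bounded window of consecutive layers meets $X_t$ in a bounded set. The aim is to enlarge $Y_1$ step by step into $X_t$ and its immediate layered neighborhood by repeated applications of \cref{enlarge Y}: each invocation either terminates via outcomes~1, 3 or 4, or strictly enlarges $Y_1$ while \cref{BoundedGrowth} (applicable because $G$ has no $K_{s,t}$ subgraph) controls the per-step growth. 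After finitely many enlargements a sufficient portion of $X_t$ is absorbed into a new precolored set $Y_1^\star$ that separates $G$ into strictly smaller subinstances, to each of which the induction hypothesis applies. The final coloring is assembled by stitching together the colorings of the subinstances along the precolored boundary, noting that $Z$ (of bounded size $\xi$) can be precolored outright at the very start and handled through standard bookkeeping of the $\nls{\cdot}$ and $\ngs{\cdot}$ sets.

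The principal obstacle is the balanced-separator step. A single application of \cref{enlarge Y} cannot absorb $X_t$ because $X_t$ may intersect arbitrarily many layers, so one must sweep through the layers intersected by $X_t$, at each step using \cref{enlarge Y} to precolor the portion lying in a bounded window, and then controlling how the precoloring propagates. The delicate requirement is that after the sweep, every monochromatic component of any extending $L$-coloring is either confined to one side of the separator or has size bounded by a function of $s,t,w,k,\xi$; this is enforced by $(s,\V)$-compatibility, which confines each color's monochromatic components to runs between ``forbidden'' layers that appear every $s+2$ steps, combined with the $\V$-width bound, which keeps each layer thin inside every bag. Chaining the clustering bounds through the nested invocations of \cref{enlarge Y} yields the final $\eta^*$, depending only on $s,t,w,k,\xi$.
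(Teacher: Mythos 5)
This theorem is not proved in this paper at all: it is imported verbatim from the companion paper \cite{LW1}, so there is no ``paper's own proof'' to check your proposal against. What can be assessed is whether your sketch would plausibly yield a self-contained proof, and there are at least two genuine gaps.

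First, in the large-$Y_1$ regime your plan is to pick a balanced node $t$ of $T$ and ``sweep'' through the layers meeting $X_t$, ``at each step using \cref{enlarge Y} to precolor the portion lying in a bounded window.'' This misdescribes what \cref{enlarge Y} does. That lemma does not let you choose which vertices to precolor: its outcome~3 absorbs the specific sets $\ngs{\cdot}$ determined by the current $Y_1$ and the degree structure, not an arbitrary prescribed window of $X_t$. If you want to split along a separator and recurse on the two sides, the relevant tool in this paper is \cref{coloring or tangle}, whose whole purpose is the dichotomy ``coloring / smaller instance / tangle,'' and whose proof already contains the careful construction of compatible list-assignments $L_A,L_B$ agreeing on $V(A\cap B)$. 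Your sketch never explains how to make the restricted lists on the two sides of the separator consistent, which is exactly the delicate bookkeeping that \cref{coloring or tangle} handles via \cref{coloringOrTangleClaim1,coloringOrTangleClaim2}. Invoking \cref{enlarge Y} for this step is a category error.

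Second, and more fundamental: the tool \cref{coloring or tangle} terminates only when a tangle of order $\theta$ cannot exist. In \cref{bdd tw Kst} this is supplied by bounded treewidth, which forbids tangles of order $w+2$. Here, bounded $\V$-width of a tree decomposition of $G-Z$ does \emph{not} bound the treewidth of $G$ or of $G-Z$ (a single bag $X_t$ may meet unboundedly many layers), so the tangle branch cannot be discharged by the same appeal to \citep[Lemma~(5.2)]{RS-X}. This is exactly why the statement needs a separate argument in \cite{LW1} and is not a corollary of \cref{bdd tw Kst}. Your final paragraph gestures at ``$(s,\V)$-compatibility confines each color to runs of $s+1$ consecutive layers, and the $\V$-width bound keeps each layer thin inside every bag,'' which is a true structural observation, but it does not by itself bound component sizes: a color class can still occupy $O(w(s+1))$ vertices in each of unboundedly many bags inside its $(s+1)$-layer window, so you still need an argument (presumably exploiting the $K_{s,t}$-free condition across adjacent bags, or a second induction along $T$) to turn ``confined to a thin layer band'' into ``bounded size.'' That step is where the real content of the theorem lives, and it is precisely the part the sketch leaves open.

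A smaller but still real issue: in outcome~3 you write ``Induction then applies directly to $(G,Y_1',L')$,'' but outcome~3 of \cref{enlarge Y} asserts the \emph{non}-existence of an $(\eta,g)$-bounded $L'$-coloring, so the right reading is a minimum-counterexample contradiction (outcome~3 cannot occur because a larger $Y_1'$ would violate extremality), not a forward descent. The paper uses exactly this minimum-counterexample framing in \cref{bdd tw Kst}; your forward induction can be made to work, but you would also need to carry the enlarged parameter $k'$ through the entire statement (including the base case and all clustering bounds), which is asserted but not set up. As written, the proposal leaves the decisive step --- discharging the tangle/separator case for graphs of bounded layered treewidth but unbounded treewidth --- essentially unproved.
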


\section{Tangles and Treewidth}
\label{TanglesTreewidth}

The goal of this section is to prove \cref{twmain} regarding graphs of bounded treewidth and to set-up machinery for the proofs of \cref{minorbasic,oddminorbasic} in subsequent sections. 

\begin{lemma} 
\label{coloring or tangle}
For all $s,t,\theta,\eta,r\in\mathbb{N}$ with $\eta \geq 9\theta+1$, for every nondecreasing function $g$ with domain ${\mathbb N_0}$, if $G$ is a graph with no $K_{s,t}$ subgraph, $Y_1$ is a subset of $V(G)$ with $9\theta+1 \leq \lvert Y_1 \rvert \leq \eta$, $F$ is a set of colors with $\lvert F \rvert \leq r$, and $L$ is an $(s,r,Y_1)$-list-assignment of $G$ such that $\{y \in Y_1: x \in L(y)\}$ is a stable set in $G$ for every $x \in F$, then at least one of the following holds:
	\begin{enumerate}
		\item There exists an $(\eta,g)$-bounded $L$-coloring of $G$ such that for every $x \in F$, the set of vertices colored $x$ is a stable set in $G$.
		\item There exist an induced subgraph $G'$ of $G$ with $\lvert V(G') \rvert < \lvert V(G) \rvert$, a subset $Y_1'$ of $V(G')$ with $\lvert Y_1' \rvert \leq \eta$, and an $(s,r,Y_1')$-list-assignment $L'$ of $G'$ such that the following hold:
		\begin{enumerate}
			\item $L'(v) \subseteq L(v)$ for every $v \in V(G')$.
			\item There does not exist an $(\eta,g)$-bounded $L'$-coloring of $G'$ such that for every $x \in F$, the set of vertices colored $x$ is a stable set in $G'$.
			\item $\{v \in Y_1: F \cap L(v) \neq \emptyset\} \cap V(G') = \{v \in Y_1': F \cap L'(v) \neq \emptyset\}$.
			\item For every $v \in V(G')-Y_1'$, we have $L'(v) \cap F = L(v) \cap F$.
		\end{enumerate} 
		\item $\T:=\{(A,B): \lvert V(A \cap B) \rvert < \theta, \lvert V(A) \cap Y_1 \rvert \leq 3\theta\}$ is a tangle of order $\theta$ in $G$.
	\end{enumerate}
\end{lemma}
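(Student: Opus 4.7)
My plan is to prove the contrapositive: assume Statements~1 and~2 both fail, and verify that $\mathcal{T}$ satisfies the three tangle axioms. Two axioms are immediate from the hypothesis $\lvert Y_1 \rvert \geq 9\theta+1$. For (T3), if some $(A,B) \in \mathcal{T}$ had $V(A) = V(G)$ then $\lvert V(A) \cap Y_1 \rvert = \lvert Y_1 \rvert > 3\theta$, contradicting $(A,B) \in \mathcal{T}$. For (T2), three separations $(A_i,B_i) \in \mathcal{T}$ for $i \in [3]$ with $A_1 \cup A_2 \cup A_3 = G$ would force $Y_1 \subseteq V(A_1) \cup V(A_2) \cup V(A_3)$ and hence $\lvert Y_1 \rvert \leq 9\theta$, again a contradiction.

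The real work is (T1). I would suppose for contradiction that some separation $(A,B)$ of $G$ of order less than $\theta$ satisfies both $\lvert V(A) \cap Y_1 \rvert > 3\theta$ and $\lvert V(B) \cap Y_1 \rvert > 3\theta$, and then use $(A,B)$ to produce data witnessing Statement~2, contradicting its assumed failure. Set $W := V(A) \cap V(B)$ and let $L^*$ be a $(W,F)$-progress of $L$; by \cref{progress basic}, $L^*$ is an $(s,r,Y_1 \cup W)$-list-assignment with $L^*(v) \subseteq L(v)$. Consider both candidate reductions $G_A := G[V(A)]$ and $G_B := G[V(B)]$, equipped with $L_A := L^*|_{V(A)}$, $L_B := L^*|_{V(B)}$ and precolored sets $Y_1^A := (Y_1 \cup W) \cap V(A)$, $Y_1^B := (Y_1 \cup W) \cap V(B)$. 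Since every vertex $v \in V(A) - W$ has $N_{G_A}(v) = N_G(v) \subseteq V(A)$ (and symmetrically), the properties (L1)--(L5) transfer cleanly from $L^*$ on $G$ to $L_A$ on $G_A$ and to $L_B$ on $G_B$. Requirements (a), (c), (d) of Statement~2 then follow directly from parts~(2), (3), (5) of \cref{progress basic}. The inequality $\lvert V(B) \cap Y_1 \rvert - \lvert W \rvert > 2\theta$ ensures $V(B) - V(A) \neq \emptyset$, giving $\lvert V(G_A) \rvert < \lvert V(G) \rvert$ (and symmetrically), while a short count shows $\lvert Y_1^A \rvert, \lvert Y_1^B \rvert \leq \lvert Y_1 \rvert - \theta \leq \eta$.

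Since Statement~2 is assumed to fail for both $(G_A, Y_1^A, L_A)$ and $(G_B, Y_1^B, L_B)$, requirement~(b) must fail in each case: there are $(\eta,g)$-bounded colorings $c_A$ of $G_A$ and $c_B$ of $G_B$ in which every $F$-color class is stable. Because $L^*$ assigns a singleton to each vertex of $W$, the colorings $c_A$ and $c_B$ agree on $W$ and glue to an $L$-coloring $c$ of $G$; $F$-stability is preserved since it is a local condition on pairs of adjacent vertices. To reach the desired contradiction with the failure of Statement~1, I must check that $c$ is $(\eta,g)$-bounded. Monochromatic components of $c$ that avoid $W$ are already monochromatic components of $c_A$ or $c_B$, hence have at most $\eta^2 g(\eta)$ vertices; a monochromatic component meeting $W$ has its restrictions to $V(A)$ and $V(B)$ contained in the union of $c_A$-components meeting $Y_1^A$ and the union of $c_B$-components meeting $Y_1^B$ respectively.

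The main obstacle I expect is this last bookkeeping step: fitting the two side contributions inside the budgets $\eta^2 g(\eta)$ per component and $\lvert Y_1 \rvert^2 g(\lvert Y_1 \rvert)$ for the union of components meeting $Y_1$. The threshold $\lvert Y_1 \rvert \geq 9\theta+1$, the bounds $\lvert Y_1^A \rvert, \lvert Y_1^B \rvert \leq \eta$, the additive estimate $\lvert Y_1^A \rvert + \lvert Y_1^B \rvert < \lvert Y_1 \rvert + 2\theta$, and the quadratic slack built into the definition of $(\eta,g)$-bounded are exactly what is needed to complete this verification and arrive at the desired contradiction.
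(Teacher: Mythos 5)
Your proposal follows the same route as the paper: reduce (T1)-failure to Statement 2, and glue the colorings on the two sides of a balanced separation. The one genuine difference is a streamlining: you build $L_A$ and $L_B$ by taking a single $(W,F)$-progress $L^*$ of $L$ on $G$ and restricting, then quoting \cref{progress basic}, whereas the paper constructs $L_A$ directly on $G[V(A)]$ and re-verifies (L1)--(L5) and properties (a),(c),(d) by hand (its Claims~\ref{coloringOrTangleClaim1} and~\ref{coloringOrTangleClaim2}). Your shortcut is sound: for $v\in V(A)-W$ one has $N_{G}(v)\subseteq V(A)$, so the conditions for $L^*$ on $G$ restrict correctly to $G[V(A)]$, and parts~2, 3, 5 of \cref{progress basic} transfer to give (a), (c), (d). This does avoid some repetition.

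The place you stop short is exactly the step you flag: verifying that the glued coloring $c$ is $(\eta,g)$-bounded. You observe correctly that every monochromatic component $M$ of $c$ meeting $W$ has each connected piece of $M\cap G[V(A)]$ meeting $Y_1^A$ (otherwise that piece would already be a component of $M$ disjoint from $W$), and likewise for $B$, so $\lvert V(M)\cap V(A)\rvert\le\lvert Y_1^A\rvert^2 g(\lvert Y_1^A\rvert)$ and symmetrically. But the crux is the inequality
\[
\lvert Y_1^A\rvert^2 g(\lvert Y_1^A\rvert)+\lvert Y_1^B\rvert^2 g(\lvert Y_1^B\rvert)\;\le\;\lvert Y_1\rvert^2 g(\lvert Y_1\rvert),
\]
which is where the hypotheses $\lvert Y_1\rvert\ge 9\theta+1$, $\lvert V(A)\cap Y_1\rvert,\lvert V(B)\cap Y_1\rvert>3\theta$, and $\lvert W\rvert<\theta$ are actually consumed. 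The paper carries this out by writing $\lvert Y_1^A\rvert^2+\lvert Y_1^B\rvert^2=(\lvert Y_1^A\rvert+\lvert Y_1^B\rvert)^2-2\lvert Y_1^A\rvert\lvert Y_1^B\rvert$ and using the lower bound $\lvert Y_1^A\rvert,\lvert Y_1^B\rvert\ge 3\theta$ to force $\lvert Y_1^A\rvert\lvert Y_1^B\rvert$ large enough to absorb the excess $(\lvert Y_1^A\rvert+\lvert Y_1^B\rvert)-\lvert Y_1\rvert\le 2\lvert W\rvert$. Since this is the one step that could plausibly fail with a weaker threshold (e.g.\ $3\theta+1$ instead of $9\theta+1$), you should actually perform this calculation rather than assert that ``the quadratic slack'' suffices; it is the load-bearing numerical content of the lemma. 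Once that inequality is verified, the rest of your proposal closes the argument correctly (a component too large must avoid $Y_1\cup W$ and hence lie in $G_A$ or $G_B$, contradicting boundedness of $c_A$ or $c_B$).
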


\begin{proof}
Suppose that Statements 1, 2 and 3 do not hold. 
Since $\T$ is not a tangle, one of (T1), (T2) or (T3) is violated.

Suppose that (T2) violated.
So there exist $(A_i,B_i) \in \T$ for $i \in [3]$ such that $A_1 \cup A_2 \cup A_3=G$.
Hence $\lvert Y_1 \rvert \leq \sum_{i=1}^3 \lvert A_i \cap Y_1 \rvert \leq 9\theta$, a contradiction.
So $\T$ satisfies (T2).
Similarly, for every $(A,B) \in \T$, we have $V(A) \neq V(G)$; otherwise $\lvert Y_1 \rvert=\lvert V(A) \cap Y_1 \rvert \leq 3\theta$, a contradiction.
So $\T$ satisfies (T3).

Therefore, (T1) is violated.
So there exists a separation $(A,B)$ of $G$ of order less than $\theta$ such that $(A,B) \not \in \T$ and $(B,A) \not \in \T$.
That is, $\lvert V(A) \cap Y_1 \rvert > 3\theta$ and $\lvert V(B) \cap Y_1 \rvert > 3\theta$.
In particular, $V(A) \neq V(G) $ and $V(B)\neq V(G)$.

Let $Y_A:=(Y_1 \cap V(A)) \cup V(A \cap B)$ and $Y_B:=(Y_1 \cap V(B)) \cup V(A \cap B)$.
Note that $\max\{\lvert Y_A \rvert, \lvert Y_B \rvert\} \leq \lvert Y_1 \rvert-2\theta<\eta$.
Let $G_A:=G[V(A)]$ and $G_B:=G[V(B)]$.

Let $L'$ be a $(V(A \cap B),F)$-progress $L'$ of $L$.
Since $\lvert F \rvert \leq r$ and $L$ is an $(s,r,Y_1)$-list-assignment of $G$, by \cref{progress basic}, $L'$ is an $(s,r,Y_A \cup Y_B)$-list-assignment of $G$.
Let $L_A=L'|_{V(A)}$ and $L_B=L'|_{V(B)}$.
Since $V(A \cap B) \subseteq Y_A \cap Y_B$, $L_A$ is an $(s,r,Y_A)$-list-assignment of $G_A$, and $L_B$ is an $(s,r,Y_B)$-list-assignment of $G_B$.
Moreover, by \cref{progress basic}, for every $C \in \{A,B\}$, $L_C$ is an $(s,r,Y_C)$-list-assignment of $G_A$ such that 
	\begin{itemize}
		\item $L_C(v) \subseteq L(v)$ for every $v \in V(G_C)$.
		\item $\{v \in Y_1: F \cap L(v) \neq \emptyset\} \cap V(C) = \{v \in Y_C: F \cap L_C(v) \neq \emptyset\}$.
		\item For every $v \in V(G_C)-Y_C$, we have $L_C(v) \cap F = L(v) \cap F$.
	\end{itemize}

Since Statement 2 does not hold, for $C \in \{A,B\}$, there exists an $(\eta,g)$-bounded $L_C$-coloring $c_C$ of $G_C$ such that for every $x \in F$, the set of vertices colored $x$ is a stable set in $G_C$.
By construction, $c_A(v)=c_B(v)$ for every $v \in V(A \cap B)$.
Define $c(v):=c_A(v)$ of $v \in V(A)$ and define $c(v):=c_B(v)$ if $v \in V(B)$.
Clearly, $c$ is an $L$-coloring such that for every $x \in F$, the set of vertices colored $x$ is a stable set in $G$.

Let $C$ be the union of the monochromatic components with respect to $c$ intersecting $Y_1 \cup V(A \cap B)$.
Then $V(C) \cap Y_A \neq \emptyset $ and $V(C) \cap Y_B \neq \emptyset$.
By construction, 
\begin{align*}
\lvert V(C) \rvert 
& \leq \lvert V(C) \cap V(A) \rvert + \lvert V(C) \cap V(B) \rvert \\
& \leq \lvert Y_A \rvert^2 g(\lvert Y_A \rvert)+ \lvert Y_B \rvert^2 g(\lvert Y_B \rvert) \\
& \leq (\lvert Y_A \rvert^2 + \lvert Y_B \rvert^2) g(\lvert Y_1 \rvert) \\
& \leq ((\lvert Y_A \rvert + \lvert Y_B \rvert)^2-2\lvert Y_A \rvert \lvert Y_B \rvert)g(\lvert Y_1 \rvert).
\end{align*}
Since $\lvert Y_A \rvert \geq 3\theta$ and $\lvert Y_B \rvert \geq 3\theta$,  
\begin{equation*}\lvert Y_1 \rvert \theta+2\theta^2 \leq (\lvert Y_A \rvert + \lvert Y_B \rvert) \theta+2\theta^2 \leq \lvert Y_A \rvert \cdot \frac{\lvert Y_B \rvert}{3} + \lvert Y_B \rvert \cdot \frac{\lvert Y_A \rvert}{3} + 2 \cdot \frac{\lvert Y_A \rvert}{3} \cdot \frac{\lvert Y_B \rvert}{3} \leq \lvert Y_A \rvert\cdot \lvert Y_B \rvert.\end{equation*}
Therefore, 
\begin{align*}
\lvert V(C) \rvert 
& \leq ((\lvert Y_A \rvert + \lvert Y_B \rvert)^2-2\lvert Y_A \rvert \lvert Y_B \rvert)g(\lvert Y_1 \rvert) \\
& \leq ((\lvert Y_1 \rvert +\theta)^2-2\lvert Y_1 \rvert \theta - 4\theta^2) g(\lvert Y_1 \rvert) \\
& \leq \lvert Y_1 \rvert^2 g(\lvert Y_1 \rvert).
\end{align*}

Since Statement 1 does not hold, $c$ is not an $(\eta,g)$-bounded $L$-coloring of $G$.
Since $\lvert Y_1 \rvert^2 g(\lvert Y_1 \rvert) \leq \eta^2g(\eta)$, there exists a monochromatic component $M$ with respect to $c$ disjoint from $Y_1 \cup V(A \cap B)$ containing at least $\eta^2 g(\eta)+1$ vertices.
However, since $M$ is disjoint from $V(A \cap B)$, either $M$ is contained in $G_A$ or $M$ is contained in $G_B$.
So $M$ is a monochromatic component with respect to $c_A$ or $c_B$.
Since $c_A,c_B$ are $(\eta,g)$-bounded, $M$ contains at most $\eta^2 g(\eta)$ vertices, a contradiction.
This proves the lemma.
\end{proof}

\begin{theorem} \label{bdd tw Kst}
For all $s,t,w\in\mathbb{N}$, there exist $\eta\in\mathbb{N}$ and a nondecreasing function $g$ such that if $G$ is a graph of treewidth at most $w$ and with no $K_{s,t}$ subgraph, $Y_1$ is a subset of $V(G)$ with $\lvert Y_1 \rvert \leq \eta$, and $L$ is an $(s,1,Y_1)$-list-assignment of $G$, then there exists an $(\eta,g)$-bounded $L$-coloring of $G$.
\end{theorem}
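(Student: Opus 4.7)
Set $\theta := w+2$ and $k := 9\theta$, and let $\eta$ and $g$ be the constants produced by \cref{enlarge Y} applied with these values of $s,t,k$. Note $\eta > k \geq 9\theta+1$, which is the numerical hypothesis required to apply \cref{coloring or tangle}. The plan is to argue by contradiction: suppose some triple $(G,Y_1,L)$ satisfies the hypotheses but admits no $(\eta,g)$-bounded $L$-coloring, and choose such a counterexample with $\lvert V(G) \rvert$ minimum and, subject to that, with $\lvert Y_1 \rvert$ maximum. A case split on $\lvert Y_1 \rvert$ will drive the argument, with \cref{enlarge Y} handling the small-$Y_1$ case and \cref{coloring or tangle} handling the large-$Y_1$ case; the tangle output of the latter will be ruled out by the treewidth bound.

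\textbf{Case 1: $\lvert Y_1 \rvert \leq k$.} Apply \cref{enlarge Y} with $r=1$ and $F=\emptyset$ (the hypothesis on stable sets is vacuous). Statement 1 contradicts the assumption that $(G,Y_1,L)$ is a counterexample. Statement 2 reads $\lvert Y_1 \rvert > k$, which fails here. If Statement 3 holds, it produces $(Y_1',L')$ with $Y_1 \subsetneq Y_1' \subseteq V(G)$, $\lvert Y_1' \rvert \leq \eta$, $L'(v) \subseteq L(v)$ for every $v$, and no $(\eta,g)$-bounded $L'$-coloring of $G$; since $L'$ is an $(s,1,Y_1')$-list-assignment, $(G,Y_1',L')$ is a counterexample on the same vertex set but with strictly larger precolored set, contradicting the maximality of $\lvert Y_1 \rvert$. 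If Statement 4 holds, then $N_G(Y_1)=\emptyset$, so every $v \in V(G)-Y_1$ lies outside $N_G[Y_1]$ and has $\lvert L(v) \rvert = s+1$ by (L4); hence $L|_{G-Y_1}$ is an $(s,1,\emptyset)$-list-assignment of $G-Y_1$, a graph of treewidth at most $w$ with no $K_{s,t}$ subgraph. By minimality of $\lvert V(G) \rvert$, $G-Y_1$ admits an $(\eta,g)$-bounded $L|_{G-Y_1}$-coloring, contradicting Statement 4.

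\textbf{Case 2: $\lvert Y_1 \rvert > k = 9\theta$.} Since $9\theta+1 \leq \lvert Y_1 \rvert \leq \eta$, apply \cref{coloring or tangle} with $r=1$ and $F=\emptyset$. Statement 1 is impossible as before. Statement 2 produces an induced subgraph $G'$ of $G$ with $\lvert V(G') \rvert < \lvert V(G) \rvert$, which still has treewidth at most $w$ and no $K_{s,t}$ subgraph, together with an $(s,1,Y_1')$-list-assignment $L'$ on which $G'$ has no $(\eta,g)$-bounded coloring; this is a smaller counterexample, contradicting minimality of $\lvert V(G) \rvert$. Statement 3 asserts the existence of a tangle in $G$ of order $\theta = w+2$. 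A standard result (Robertson--Seymour) says that a graph of treewidth at most $w$ has branch-width at most $w+1$ and hence admits no tangle of order $w+2$, a contradiction. All cases yield contradictions, so no counterexample exists.

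\textbf{Main obstacle.} The only substantive step is Case~2: the tangle/treewidth obstruction, which is precisely what isolates the bounded-treewidth setting of this theorem and makes the lemmas work. The two bookkeeping items to double-check are (i) that the induced subgraphs and vertex deletions used in the reductions preserve both the treewidth bound and the $K_{s,t}$-freeness (both of which are subgraph-monotone), and (ii) that the restrictions of $L$ produced in Statements~2 and~4 genuinely remain $(s,1,\,\cdot\,)$-list-assignments (an immediate check from (L1)--(L5)). Beyond these, all the engineering has already been packaged into \cref{enlarge Y,coloring or tangle}, and in the subsequent sections the same skeleton will recur, with the tangle case no longer being contradicted but instead handed off to the graph-minor structure theorem.
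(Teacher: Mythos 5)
Your proof is correct and follows essentially the same route as the paper: define $\eta, g$ via \cref{enlarge Y} with $k=9w+18$, take a counterexample minimizing $|V(G)|$ then maximizing $|Y_1|$, use \cref{enlarge Y} to force $|Y_1|>k$, then use \cref{coloring or tangle} to extract a tangle of order $w+2$, contradicting the treewidth bound via RS-X. One small slip: you write ``$\eta > k \geq 9\theta+1$,'' but since $k = 9\theta$ exactly, you should instead note that $\eta > k = 9\theta$ and integrality give $\eta \geq 9\theta+1$; the conclusion you need is still true.
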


\begin{proof}
Define $\eta$ and $g$ to be the number $\eta$ and the function $g$  in \cref{enlarge Y} by taking $s=s$, $t=t$, $k=9w+18$ and $r=1$.
Note that $g(x) \geq \eta > 9w+18$ for every $x \in {\mathbb N_0}$ by \cref{enlarge Y}.

Suppose to the contrary that this theorem does not hold.
So there exist a graph of treewidth at most $w$ and with no $K_{s,t}$ subgraph, a subset $Y_1$ of $V(G)$ with $\lvert Y_1 \rvert \leq \eta$, and an $(s,1,Y_1)$-list-assignment $L$ of $G$ such that there does not exist an $(\eta,g)$-bounded $L$-coloring of $G$.
We further assume that $\lvert V(G) \rvert$ is as small as possible and subject to this, $\lvert Y_1 \rvert$ is as large as possible.
Since $g(x) \geq \eta$ for every $x \in {\mathbb N_0}$, we have $\lvert V(G) \rvert > \eta$, as otherwise any $L$-coloring of $G$ is $(\eta,g)$-bounded.

By \cref{enlarge Y} and the choice of $G$ and $Y_1$, we have $\lvert Y_1 \rvert > 9w+18$.
By \cref{coloring or tangle}, there exists a tangle of order $w+2$ in $G$.
But $G$ has treewidth at most $w$, there exists no tangle of order $w+2$ in $G$ by \citep[Lemma~(5.2)]{RS-X}, a contradiction.
This proves the theorem.
\end{proof}

\begin{corollary}
\label{twcor}
For all $s,t,w\in\mathbb{N}$, there exists $\eta\in\mathbb{N}$ and a nondecreasing function $g$ with domain ${\mathbb N_0}$ such that if $G$ is a graph of treewidth at most $w$ and with no $K_{s,t}$ subgraph and $L$ is a $(s+1)$-list-assignment of $G$, then there exists an $L$-coloring of $G$ with clustering $\eta^2 g(\eta)$.
\end{corollary}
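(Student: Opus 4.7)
The plan is to derive \cref{twcor} as a direct corollary of \cref{bdd tw Kst} by specializing to the case $Y_1=\emptyset$ and shrinking the given lists so that they meet the structural requirements of an $(s,1,Y_1)$-list-assignment.

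First I would take $\eta$ and $g$ to be exactly the quantities produced by \cref{bdd tw Kst} for the given parameters $s$, $t$, $w$. Given a graph $G$ with $\text{tw}(G)\leq w$ and no $K_{s,t}$ subgraph, and an $(s+1)$-list-assignment $L$, I would define a new list-assignment $L'$ by choosing, for each $v\in V(G)$, an arbitrary subset $L'(v)\subseteq L(v)$ of size exactly $s+1$ (possible since $|L(v)|\geq s+1$).

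Next I would verify that, setting $Y_1:=\emptyset$, the list-assignment $L'$ is an $(s,1,Y_1)$-list-assignment. Condition (L1) holds because $|L'(v)|=s+1\in[s+1]$; (L2) holds vacuously since no vertex has a singleton list; (L3) is vacuous because $N^{<s}(\emptyset)=\emptyset$; (L4) asks that $|L'(v)|=s+1$ for $v\in V(G)-N_G[\emptyset]=V(G)$, which is immediate; and (L5) demands $|L'(v)|\geq 2$, which follows from $|L'(v)|=s+1\geq 2$. Since $|Y_1|=0\leq\eta$, the hypotheses of \cref{bdd tw Kst} are satisfied.

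Applying \cref{bdd tw Kst} then yields an $(\eta,g)$-bounded $L'$-coloring $c$ of $G$. By definition of $(\eta,g)$-boundedness, every monochromatic component with respect to $c$ has at most $\eta^2 g(\eta)$ vertices. Because $L'(v)\subseteq L(v)$ for each $v$, the coloring $c$ is also an $L$-coloring, so $c$ witnesses the desired conclusion. There is no substantive obstacle here: the entire content of the corollary is already packaged inside \cref{bdd tw Kst}, and the only routine check is that trimming lists to size $s+1$ produces a valid $(s,1,\emptyset)$-list-assignment.
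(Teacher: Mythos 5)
Your proof is correct and follows essentially the same route as the paper: trim each list to size $s+1$, observe that this gives an $(s,1,\emptyset)$-list-assignment, and invoke \cref{bdd tw Kst}. The only difference is that you spell out the verification of (L1)--(L5), which the paper leaves as ``clearly.''
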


\begin{proof}
Define $L'(v)$ to be an $(s+1)$-element subset of $L(v)$ for every $v \in V(G)$.
Clearly, $L'$ is an $(s,1,\emptyset)$-list-assignment of $G$, and every $L'$-coloring is an $L$-coloring.
The result immediately follows from \cref{bdd tw Kst}.
\end{proof}

Observe that \cref{twcor} implies \cref{twmain}.

\section{Graph Minor Structure Theorem}

Our proofs of \cref{minorbasic,oddminorbasic} depend on the Graph Minor Structure Theorem, which we now introduce. Recall the definition of an $H$-minor $\alpha$ in a graph $G$ from \cref{Definitions}. A tangle~$\T$ in $G$ \emph{controls} an $H$-minor $\alpha$ if there does not exist $(A,B) \in \T$ of order less than $\abs{V(H)}$ such that $V(\alpha(h)) \subseteq V(A)$ for some $h\in V(H)$. We use the following theorem of \citet{GGRSV09} on odd minors.

\begin{theorem}[{\cite[Theorem 13]{GGRSV09}}]
\label{structure odd minor}
There is a constant $c$ such that for all $\ell\in\mathbb{N}$, if $t:=\lceil c \ell \sqrt{\log 12\ell}\rceil$ then for every graph $G$ that contains a $K_t$-minor $\alpha$, either $G$ contains an odd $K_\ell$-minor, or there exists a set $X$ of vertices with $\abs{X}<8\ell$ such that the (unique) block $U$ of $G-X$ that intersects all branch sets of $\alpha$ disjoint from $X$ is bipartite.
\end{theorem}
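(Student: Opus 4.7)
The plan is to proceed via a parity-labelling argument on the given $K_t$-minor. Fix a spanning tree $T_i$ of each branch set $B_i$ together with a proper $2$-coloring $c_i$ of $T_i$ (unique up to a global flip), and for each pair $i\neq j$ pick one edge $e_{ij}\in E(G)$ between $B_i$ and $B_j$. Record a label $\varepsilon(i,j)\in\{0,1\}$ according to whether the endpoints of $e_{ij}$ agree under $\bigcup_k c_k$ or not. By the definition of odd minor from \cref{Definitions}, an odd $K_\ell$-minor supported on an $\ell$-subset $S$ of the branch sets exists precisely when, after flipping the colorings $c_i$ on some subset $A\subseteq S$, every $e_{ij}$ with $i,j\in S$ becomes monochromatic; equivalently, when the $\varepsilon=1$ edges on $S$ form a cut of $K_S$. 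So it suffices either to find an $\ell$-subset on which the $\varepsilon=1$ edges induce a bipartite graph, or to locate a small hitting set.

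Next I would turn the search into an extremal problem on a $2$-edge-coloring of $K_t$. If no $\ell$-subset makes the $\varepsilon=1$ edges bipartite, then the $\varepsilon=1$ graph is locally non-bipartite on every $\ell$-set. A Ramsey-type/extremal argument--- which should be what is responsible for the $\sqrt{\log 12\ell}$ factor in $t$ --- then forces a small set of branch-set indices meeting all odd cycles of the $\varepsilon=1$ graph, modulo the freedom to rechoose the $e_{ij}$ and alter the trees $T_i$. At the same time, one uses that $T_i$ and $e_{ij}$ are not canonical: re-routing the $T_i$--$T_j$ connection through an odd cycle of $G$ lying in $B_i\cup\{e_{ij}\}\cup B_j$ flips $\varepsilon(i,j)$, so the persistent non-bipartite structure must be attributable to a few specific vertices of $G$.

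The main obstacle, which is where the constant $8$ is pinned down, is promoting this combinatorial hitting set on branch-set indices to an actual set $X\subseteq V(G)$ with $|X|<8\ell$ such that the unique block $U$ of $G-X$ intersecting the surviving branch sets disjoint from $X$ is bipartite. Concretely, if $G-X$ still contains an odd cycle in that block, I would argue that one can reroute an appropriate $e_{ij}$ through this odd cycle, flip $\varepsilon(i,j)$, and iterate; this process must terminate, because each iteration either reduces the number of $\varepsilon=1$ edges on the current $\ell$-subset or augments $X$ by one genuine vertex of $G$, and with at most $8\ell$ augmentations one either reaches the desired bipartite block or produces enough parity flexibility to select an $\ell$-subset realising an odd $K_\ell$-minor. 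Quantifying precisely the trade-off between the growth of $X$ and the reduction of $\varepsilon=1$ obstructions is the delicate step I would expect to consume most of the work.
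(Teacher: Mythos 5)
The paper does not prove this theorem; it is cited to \cite[Theorem~13]{GGRSV09}, so there is no in-paper proof to compare against. Evaluating your proposal on its own terms, the parity-labelling setup and the observation that flipping $c_i$ on a subset $A\subseteq S$ turns the $\varepsilon$-pattern on $S$ into the cut $\delta(A)$ are sound. But there is an error at the very first deduction: from ``the $\varepsilon=1$ edges on $S$ form a cut of $K_S$'' you pass to ``the $\varepsilon=1$ edges induce a bipartite graph.'' These are not equivalent. A cut of $K_S$ is a \emph{complete} bipartite graph, and a sparse bipartite $\varepsilon=1$ pattern does not yield an odd $K_\ell$-minor on those branch sets: after switching the colorings on one side $A$ of the bipartition, the edges of $\delta(A)$ that originally carried $\varepsilon=0$ become $\varepsilon=1$, so you still fail unless you can additionally reroute each such connector to flip its parity, which your outline does not argue and which need not be possible for a given $K_t$-model.

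Two further gaps remain. The threshold $\lceil c\ell\sqrt{\log 12\ell}\rceil$ is not a Ramsey quantity---Ramsey numbers are exponential in $\ell$---but is the Kostochka--Thomason density threshold for forcing a $K_\ell$-minor, which signals a density-extraction argument rather than a hitting-set argument against odd cycles of the $\varepsilon=1$ graph; your proposed mechanism does not produce a bound anywhere near $\ell\sqrt{\log\ell}$. And the iterative rerouting step (reroute $e_{ij}$ through an odd cycle, flip $\varepsilon(i,j)$, augment $X$, iterate) carries no termination invariant, no accounting that pins down $\lvert X\rvert<8\ell$, and no argument that the resulting block is unique and bipartite. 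This is precisely where the real work in \cite{GGRSV09} lies, and---as you yourself acknowledge---the proposal leaves it open.
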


A \emph{society} is a pair $(S,\Omega)$, where $S$ is a graph and $\Omega$ is a cyclic permutation of a subset $\overline{\Omega}$ of~$V(S)$. For $\rho\in\mathbb{N}_0$, a society $(S,\Omega)$ is a \emph{$\rho$-vortex} if for all distinct $u,v \in \overline{\Omega}$, there do not exist $\rho+1$ disjoint paths in $S$ between $I \cup \{u\}$ and $J \cup \{v\}$, where $I$ is the set of vertices in $\overline{\Omega}$ after $u$ and before $v$ in the order $\Omega$, and $J$ is the set of vertices in $\overline{\Omega}$ after $v$ and before $u$.
For a society $(S,\Omega)$ with $\overline{\Omega} = (v_1,v_2,\ldots,v_n)$ in order, a \emph{vortical decomposition} of~$(S,\Omega)$ is a path decomposition $(t_1t_2\cdots t_n, \X )$ such that the $i$-th bag $X_i$ of~$\X$ contains the $i$-th vertex $v_i$ for each $i \in [n]$. The \emph{adhesion} of such a vortical decomposition is $\max\{ |X_i \cap X_{j} | : i,j\in[n], i\neq j\}$. We use the following theorem of \citet{RS-IX}.

\begin{theorem}[{\cite[(8.1)]{RS-IX}}] \label{vortical}
Every $\rho$-vortex has a vortical decomposition of adhesion at most $\rho$. 
\end{theorem}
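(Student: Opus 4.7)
The plan is to convert the $\rho$-vortex condition into small vertex cuts in $S$ via Menger's theorem, and then to slice $S$ along these cuts to form the bags of the path decomposition.

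First, for each $i \in [n-1]$, I would apply the $\rho$-vortex condition to the pair $u = v_1$ and $v = v_{i+1}$. With this choice, the arc $I \cup \{u\}$ equals $A_i := \{v_1, \ldots, v_i\}$ and the arc $J \cup \{v\}$ equals $B_i := \{v_{i+1}, \ldots, v_n\}$. The non-existence of $\rho+1$ pairwise disjoint $A_i$--$B_i$ paths in $S$, combined with Menger's theorem, yields a vertex separator $C_i \subseteq V(S)$ of cardinality at most $\rho$ such that $S - C_i$ contains no path from $A_i \setminus C_i$ to $B_i \setminus C_i$. Set $C_0 := \emptyset$ and $C_n := \emptyset$.

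Next, I would choose the separators coherently and build the bags. A standard uncrossing argument using submodularity of vertex cuts lets us take each $C_i$ to be a minimum $A_i$--$B_i$ separator closest to $B_i$; with this choice, letting $L_i$ denote the set of vertices in $V(S) \setminus C_i$ with no path to $B_i \setminus C_i$ in $S - C_i$ and $R_i := V(S) \setminus (L_i \cup C_i)$, the nesting property $L_i \setminus C_{i+1} \subseteq L_{i+1}$ holds for every $i \in [0, n-1]$ (adopting $L_0 = \emptyset$ and $L_n = V(S)$). For each $v \in V(S)$, let $\tau(v)$ be the smallest $i$ with $v \in L_i \cup C_i$ and $\sigma(v)$ the largest $i$ with $v \in R_i \cup C_i$, and define
\[ X_i := \{v \in V(S) : \tau(v) \leq i \leq \sigma(v) + 1 \}. \]
By construction $v_i \in X_i$ (since $\tau(v_i) \leq i$ and $\sigma(v_i) \geq i-1$), and since no edge of $S$ crosses any cut $C_j$, every edge has both endpoints in some common bag.

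Finally, I would verify the adhesion bound. By the nesting property the set $\{i : v \in X_i\}$ is a contiguous interval for each $v$, so $(X_1,\dots,X_n)$ is a path decomposition. For consecutive bags, any $v \in X_i \cap X_{i+1}$ simultaneously satisfies $v \in L_i \cup C_i$ (from $\tau(v) \leq i$ combined with nesting) and $v \in R_i \cup C_i$ (from $\sigma(v) \geq i$); since $L_i$ and $R_i$ are disjoint, this forces $v \in C_i$, giving $X_i \cap X_{i+1} \subseteq C_i$ and hence $|X_i \cap X_{i+1}| \leq \rho$. For arbitrary $i < j$, the path-decomposition property yields $X_i \cap X_j \subseteq X_i \cap X_{i+1}$, so $|X_i \cap X_j| \leq \rho$. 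I expect the main technical obstacle to be the uncrossing step: producing a globally coherent family of separators $C_1, \ldots, C_{n-1}$ that simultaneously achieves both the nesting property and the size bound. This is the delicate heart of the proof and relies on the submodular inequality for vertex connectivity.
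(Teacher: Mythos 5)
The paper does not prove this statement; it is quoted directly as Theorem (8.1) of Robertson and Seymour's Graph Minors IX. Your reconstruction follows the natural route — apply Menger's theorem at each break point between $v_i$ and $v_{i+1}$ to get a separator of size at most $\rho$, uncross to make the family of separators coherent, and read the bags off the resulting nested structure — and this is indeed how the result is proved.

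One point needs repair. The nesting property you assert, $L_i \setminus C_{i+1} \subseteq L_{i+1}$, is equivalent to $L_i \subseteq L_{i+1} \cup C_{i+1}$, and iterating it only yields $L_j \subseteq L_i \cup C_{j+1} \cup \cdots \cup C_i$ for $j<i$; that is too weak. Your adhesion argument needs $\tau(v)\leq i$ to force $v \in L_i \cup C_i$ and $\sigma(v)\geq i$ to force $v \in R_i \cup C_i$, which requires that the separations themselves are nested: $L_j \cup C_j \subseteq L_i \cup C_i$ and $R_i \cup C_i \subseteq R_j \cup C_j$ for all $j\leq i$. This stronger nesting is in fact achievable with your choice of separators closest to $B_i$, but the uncrossing argument has two stages you would need to make explicit: submodularity together with minimality of the $B_{i+1}$-side first gives $R_{i+1}\cup C_{i+1}\subseteq R_i\cup C_i$; then one must separately show $C_i\subseteq L_{i+1}\cup C_{i+1}$, which follows because $\bigl((L_i\cup C_i)\cap(L_{i+1}\cup C_{i+1}),\,R_i\cup C_i\bigr)$ is again a minimum $A_i$--$B_i$ separation, forcing its cut set to equal $C_i$. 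You rightly flag the uncrossing as the delicate heart, but as stated your nesting lemma is not strong enough to deliver the bound $|X_i\cap X_{i+1}|\leq\rho$.
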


A \emph{segregation} of a graph $G$ is a set $\Se$ of societies such that:
\begin{itemize}
	\item $S$ is a subgraph of~$G$ for every $(S, \Omega) \in \Se$, and $\bigcup \{S: (S,\Omega) \in \Se\}=G$, and
	\item for all distinct $(S,\Omega)$ and $(S', \Omega') \in \Se$, we have $V(S \cap S') \subseteq \overline{\Omega} \cap \overline{\Omega'}$ and $E(S \cap S') = \emptyset$.
\end{itemize}
We write $V(\Se) = \bigcup \{\overline{\Omega}: (S, \Omega) \in \Se\}$.
For positive integers $\kappa,\rho$, a segregation $\Se$ is of {\it type} $(\kappa,\rho)$ if there exist disjoint subsets $\Se_1,\Se_2$ of $\Se$ with $\Se=\Se_1 \cup \Se_2$ and $\lvert \Se_2 \rvert \leq \kappa$ such that $\lvert \overline{\Omega} \rvert \leq 3$ for every $(S,\Omega) \in \Se_1$, and every member of $\Se_2$ is a $\rho$-vortex.
For a tangle $\T$ in $G$, a segregation $\Se$ of $G$ is \emph{$\T$-central} if for every $(S,\Omega) \in \Se$, there exists no $(A,B) \in \T$ with $B \subseteq S$.

Let $\Sigma$ be a surface. 
For every subset $\Delta$ of $\Sigma$, we denote the closure of $\Delta$ by $\overline{\Delta}$ and the boundary of~$\Delta$ by $\partial\Delta$.
An \emph{arrangement} of a segregation $\Se = \{(S_1, \Omega_1), \ldots, (S_k, \Omega_k)\}$ in $\Sigma$ is a function $\alpha$ with domain $\Se \cup V(\Se)$, such that:
\begin{itemize}
	\item For $i \in [k]$, $\alpha(S_i, \Omega_i)$ is a closed disk $\Delta_i \subseteq \Sigma$, and $\alpha(x) \in \partial\Delta_i$ for each $x \in \overline{\Omega_i}$.
	\item For $i,j\in[k]$ with $i \neq j$, if $x \in \Delta_i \cap \Delta_j$, then $x=\alpha(v)$ for some $v \in \overline{\Omega_i} \cap \overline{\Omega_j}$.
	\item For all distinct $x,y \in V(\Se)$, we have $\alpha(x) \neq \alpha(y)$.
	\item For $i\in[k]$, $\Omega_i$ is mapped by $\alpha$ to a natural order of~$\alpha(\overline{\Omega_i})$ determined by $\partial\Delta_i$.
\end{itemize}
An arrangement is \emph{proper} if $\Delta_i \cap \Delta_j = \emptyset$ whenever 
$\abs{ \overline{\Omega_i} }, \abs{ \overline{\Omega_j}} >3$,
for all $1 \leq i < j \leq k$.

For a tangle $\T$ in a graph $G$ of order $\theta$ and a subset $Z$ of $V(G)$ with $\lvert Z \rvert < \theta$, $\T-Z$ is defined to be the set of all separations $(A',B')$ of $G-Z$ of order less than $\theta-\lvert Z \rvert$ such that there exists $(A,B) \in \T$ with $Z \subseteq V(A \cap B)$, $A'=A-Z$ and $B'=B-Z$.
It is proved in \citet{RS-X} that $\T-Z$ is a tangle in $G-Z$ of order $\theta-\lvert Z \rvert$.

The following is the Graph Minor Structure Theorem of \citet{RS-XVI}. 

\begin{theorem}[{\cite[(3.1)]{RS-XVI}}] \label{structure minor free}
For every graph $H$, there exist $\kappa,\rho,\xi,\theta \in\mathbb{N}$ such that if $\T$ is a tangle of order at least $\theta$ in a graph $G$ controlling no $H$-minor of $G$, then there exist $Z \subseteq V(G)$ with $\lvert Z \rvert \leq \xi$ and a $(\T-Z)$-central segregation $\Se$ of $G-Z$ of type $(\kappa,\rho)$ such that $\Se$ has a proper arrangement in some surface in which $H$ cannot be embedded.
\end{theorem}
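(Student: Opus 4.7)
The plan is to follow the broad strategy of Robertson and Seymour's Graph Minors series. The statement asserts that whenever a tangle $\T$ of sufficiently large order does not control an $H$-minor, then after deleting a bounded apex set $Z$ the remaining graph admits a $(\T-Z)$-central segregation whose non-trivial pieces are bounded-depth vortices arranged properly in a surface $\Sigma$ into which $H$ does not embed. I would attack this in three conceptual stages, choosing $\theta$ (and consequently $\kappa,\rho,\xi$) as functions of $\lvert V(H) \rvert$ throughout.

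First I would use $\T$ to extract a large flat wall controlled by it. The entry point is the tangle-to-wall correspondence of \citet{RS-X}, which (combined with the Excluded Grid Theorem) gives that a tangle of order at least $\theta$ controls a wall of size growing unboundedly with $\theta$. I would then iterate the Flat Wall Theorem: either a large subwall is flat (meaning its interior admits an embedding in a disk consistent with $\T$), or one can extract many disjoint paths creating an unruly configuration on the wall. Chaining such bad paths through the wall exhibits a $K_t$-minor, for any prescribed $t=t(H)$, controlled by $\T$; and any sufficiently large such clique minor contains $H$ as a minor, contradicting the hypothesis. So a large flat wall must exist.

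Second, with a flat wall in hand I would grow its disk embedding outward to construct the candidate surface $\Sigma$ and the segregation $\Se$. Pieces of $G$ attached to the wall only in a locally bounded (path-width-like) fashion along a short arc of the wall boundary become vortices: \cref{vortical} then certifies that each has a vortical decomposition of bounded adhesion, which is exactly the data of a $\rho$-vortex. Pieces attaching globally either force additional handles or crosscaps in $\Sigma$ (raising its Euler genus), or they are absorbed, one at a time, into the apex set $Z$. The bounds $\kappa,\rho,\xi$ emerge from the following meta-argument: if too many vortices appear, or a vortex has too large depth, or $\lvert Z \rvert$ exceeds a bound depending on $\lvert V(H) \rvert$, then one may re-route linkages through the wall to build a $K_t$-minor, and thus an $H$-minor, controlled by $\T$, which is again a contradiction.

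Finally, one has to verify that the constructed surface $\Sigma$ cannot embed $H$; otherwise the embedding of $H$ in $\Sigma$, combined with the vortices, the apex set $Z$, and the flat wall used as a ``backbone'' for realising the branch sets of an $H$-minor, yields an $H$-minor controlled by $\T$. The main obstacle is unquestionably the second stage: taming the attachments of $G-Z$ to the wall, and in particular separating local attachments (which can be absorbed into vortices) from global ones (which must be apex vertices or contribute to the genus of $\Sigma$). This is the most technical portion of the Robertson--Seymour programme, relying on delicate linkage arguments whose quantitative form is what ultimately forces $\theta$ to be enormous compared to $\lvert V(H) \rvert$. For this reason I would not try to reprove the theorem here but apply \cite[(3.1)]{RS-XVI} (or any of its modern reworkings) as a black box.
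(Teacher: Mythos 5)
The paper does not prove this statement; it is the Graph Minor Structure Theorem of Robertson and Seymour, which the paper simply cites as \cite[(3.1)]{RS-XVI}. Your proposal ultimately does the same thing --- after a reasonable high-level sketch of the Robertson--Seymour strategy (flat-wall extraction via the Excluded Grid Theorem and the Flat Wall Theorem, taming attachments into bounded-depth vortices and a bounded apex set, and verifying $H$ does not embed in the resulting surface) you correctly conclude by invoking the theorem as a black box, which matches the paper's treatment exactly.
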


\section{Main Proofs}
\label{MainProofs}

The goal of this section is to prove \cref{minorbasic,oddminorbasic}. Let $G$ be a graph.
A {\it location} $\L$ in $G$ is a collection of separations of $G$ such that $A \subseteq B'$ for every ordered pair of distinct members $(A,B),(A',B')$ of $\L$.
Define $G(\L)$ to be the graph $G[\bigcap_{(A,B) \in \L}V(B)]$.

Let $s \in {\mathbb N}$, $r \in {\mathbb N_0}$ and $\ell \in [0,s+2]$.
Let $G$ be a graph and let $Y_1\subseteq V(G)$.
A list-assignment $L$ of $G$ is an {\it $(s,Y_1,\ell,r)$-list-assignment} if the following hold:
\begin{itemize}
	\item[(R1)] $L(v) \subseteq [s+2+r]$ for all $v\in V(G)$.
	\item[(R2)] $L$ is an $(s,r+2,Y_1)$-list-assignment of $G$.
	\item[(R3)] For every $y \in Y_1$ and color $x \in \{\ell\} \cup [s+3,s+2+r]$ with $x \in L(y)$, we have $\{v \in N_G(y)-Y_1: x \in L(v)\}=\emptyset$.
	\item[(R4)] For every $x \in [s+3,s+2+r]$, the set $\{y \in Y_1: x \in L(y)\}$ is a stable set in $G$. 
	\item[(R5)] For every $v \in V(G)-Y_1$, 
	\begin{equation*}\lvert \{y \in N_G(v) \cap Y_1: L(y) \subseteq [s+3,s+r+2]\} \rvert = r-\lvert L(v) \cap [s+3,s+2+r]\rvert.\end{equation*}
\end{itemize}
Note that every $(s,Y_1,0,0)$-list-assignment of $G$ is an $(s,2,Y_1)$-list-assignment. For graph minors we use $\ell=r=0$. The more general setting is used for odd minors. 

Let $G$ be a graph and $Z,Y_1 \subseteq V(G)$.
Let $s\in {\mathbb N}$, $r \in {\mathbb N_0}$ and $\ell \in [0,s+2]$.
Let $L$ be an $(s,Y_1,\ell,r)$-list-assignment of $G$.
A {\it $(Z,\ell)$-growth} of $L$ is a list-assignment $L'$ of $G$ defined as follows:
\begin{itemize}
	\item Let $Y_1^{(-1)}:=Y_1$ and $L^{(-1)}(v):=L(v)$ for every $v \in V(G)$. 
	\item Let $U_0:=Z$, and for each $i \geq 1$, let $U_i := \ngs{Y_1^{(i-1)}}$. 
	\item Let $Y^{(i)}_1 :=Y^{(i-1)}_1 \cup U_i$ for $i \geq 0$.
	\item For every $i \geq 0$, let $L^{(i)}$ be a $(U_i,\{\ell,i\} \cup [s+3,s+2+r])$-progress of $L^{(i-1)}$ such that $L^{(i)}(v) \cap [s+3,s+2+r] = L^{(i-1)}(v) \cap [s+3,s+2+r]$ for every $v \in V(G)-(Y_1^{(i-1)} \cup U_i)$. 
		(Note that $\lvert \{\ell,i\} \cup [s+3,s+2+r] \rvert \leq r+2$, so a $(U_i,\{\ell,i\} \cup [s+3,s+2+r])$-progress of $L^{(i-1)}$ can be defined, by (R2); furthermore, such an $L^{(i)}$ exists since $s+2 \geq s-1$ and for every $v \in V(G)-(Y_1^{(i-1)} \cup U_i)$, $L^{(i)}(v)$ can be obtained from $L^{(i-1)}(v)$ by removing elements, and $L^{(i)}(v) \neq L^{(i-1)}(v)$ only when $v \in \nls{Y_1^{(i-1)} \cup U_i}$.)
	\item Let $Y_1' := Y^{(s+2)}_1$ and $L' := L^{(s+2)}$.
\end{itemize}

\begin{lemma} 
\label{Z growth}
For all $s,t\in\mathbb{N}$ there exists a function $h:\mathbb{N}_0\to\mathbb{N}_0$ such that if $G$ is a graph with no $K_{s,t}$ subgraph, $Z,Y_1 \subseteq V(G)$, $\ell \in [0,s+2]$, $r \in {\mathbb N_0}$, and $L$ is an $(s,Y_1,\ell,r)$-list-assignment of $G$, then for each $(Z,\ell)$-growth $L'$ of $L$ and set $Y_1' $ defined above  the following hold:
	\begin{enumerate}
		\item $L'$ is an $(s,Y_1',\ell,r)$-list-assignment.
		\item $L'(v) \subseteq L(v)$ for every $v \in V(G)$.
		\item $\lvert Y_1' \rvert \leq h(\lvert Y_1 \cup Z \rvert)$.
		\item For every $L'$-coloring $c'$ of $G$, every monochromatic component $M$ with respect to $c'$ intersecting $Y_1 \cup Z$ is contained in $G[Y_1']$.
		Furthermore, if the vertices of $M$ are assigned colors in $\{\ell\} \cup [s+3,s+2+r]$, then $M \subseteq G[Y_1]$.
		\item If $(A,B)$ is a separation of $G$ with $Z \subseteq V(A \cap B)$, then $\lvert Y_1' \cap V(A) \rvert \leq h(\lvert V(A \cap B) \rvert + \lvert Y_1 \cap V(A) \rvert)$ for every $(A,B) \in \L$. 
	\end{enumerate}
\end{lemma}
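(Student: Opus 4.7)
The plan is to prove all five conclusions by induction on $i$ over the construction of $L^{(i)}$ and $Y_1^{(i)}$, using \cref{progress basic} at each iteration and \cref{BoundedGrowth} to control sizes.

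First, I would verify Statement 1 inductively: each $L^{(i)}$ is an $(s, Y_1^{(i)}, \ell, r)$-list-assignment. Condition (R2) is immediate from \cref{progress basic}(1), since $\lvert \{\ell, i\} \cup [s+3, s+2+r] \rvert \leq r+2$. Condition (R1) follows from \cref{progress basic}(2), which shows that progress only shrinks lists. Condition (R3) uses \cref{progress basic}(4) at step $i$: since $U_i = \ngs{Y_1^{(i-1)}}$, for every $y \in Y_1^{(i)}$ and every $x \in \{\ell,i\} \cup [s+3, s+2+r]$ with $x \in L^{(i)}(y)$, no neighbour of $y$ outside $Y_1^{(i)}$ has $x$ in its $L^{(i)}$-list; in particular this covers $x \in \{\ell\} \cup [s+3, s+2+r]$. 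Condition (R4) is preserved because any $y \in U_j \setminus Y_1$ has $L^{(j)}(y)$ disjoint from $[s+3, s+2+r]$ by construction, so the only members of $Y_1^{(i)}$ whose list contains $x \in [s+3, s+2+r]$ already lie in $Y_1$, where (R4) holds by hypothesis. Finally, (R5) is preserved because the construction explicitly demands $L^{(i)}(v) \cap [s+3, s+2+r] = L^{(i-1)}(v) \cap [s+3, s+2+r]$ for $v \notin Y_1^{(i)}$, while $L^{(i)}(y) \cap [s+3, s+2+r] = \emptyset$ for the newly added $y \in Y_1^{(i)} \setminus Y_1^{(i-1)}$. Statement 2 then follows by iterating \cref{progress basic}(2).

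For Statement 3, define $h$ as the $(s+2)$-fold iterate of $\phi(x) := x + f_{s,t}(x)$, where $f_{s,t}$ is from \cref{BoundedGrowth}. Then $\lvert Y_1^{(0)} \rvert \leq \lvert Y_1 \cup Z \rvert$ and, for $i \geq 1$, $\lvert Y_1^{(i)} \rvert \leq \lvert Y_1^{(i-1)} \rvert + f_{s,t}(\lvert Y_1^{(i-1)} \rvert) = \phi(\lvert Y_1^{(i-1)} \rvert)$, giving $\lvert Y_1^{(s+2)} \rvert \leq h(\lvert Y_1 \cup Z \rvert)$. For Statement 5, the same bound is run locally inside $V(A)$ for each $(A, B) \in \L$: any $v \in U_i \cap (V(A) \setminus V(A \cap B))$ has $N_G(v) \subseteq V(A)$, so its $\geq s$ neighbours in $Y_1^{(i-1)}$ all lie in $Y_1^{(i-1)} \cap V(A)$; applying \cref{BoundedGrowth} to $G[V(A)]$ (which inherits $K_{s,t}$-freeness) bounds this portion of $U_i \cap V(A)$ by $f_{s,t}(\lvert Y_1^{(i-1)} \cap V(A) \rvert)$, while $\lvert U_i \cap V(A \cap B) \rvert \leq \lvert V(A \cap B) \rvert$ trivially, and $Z \subseteq V(A \cap B)$ bounds the $i=0$ contribution similarly. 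Iterating and possibly enlarging $h$ yields $\lvert Y_1' \cap V(A) \rvert \leq h(\lvert V(A \cap B) \rvert + \lvert Y_1 \cap V(A) \rvert)$.

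Statement 4 is the main technical step. Fix an $L'$-coloring $c'$ and a monochromatic component $M$ of colour $x$ with $M \cap (Y_1 \cup Z) \neq \emptyset$. By (R1), $x \in [s+2+r]$, and because the forbidden sets $\{\ell, i\} \cup [s+3, s+2+r]$ for $i = 0, 1, \ldots, s+2$ collectively cover $[s+2+r]$, choose $j \in \{0, \ldots, s+2\}$ with $x \in \{\ell, j\} \cup [s+3, s+2+r]$. Then \cref{progress basic}(4) at step $j$ (whose hypothesis $\ngs{Y_1^{(j-1)}} \subseteq U_j$ holds with equality) says that for every $y \in Y_1^{(j)}$ with $x \in L^{(j)}(y)$, no neighbour of $y$ outside $Y_1^{(j)}$ has $x$ in its $L^{(j)}$-list, hence none in its $L'$-list by Statement 2. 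Since $Y_1 \cup Z \subseteq Y_1^{(j)}$, $M$ meets $Y_1^{(j)}$; any edge of $M$ from $Y_1^{(j)}$ to its complement would have both endpoints coloured $x$, contradicting the above, so $M \subseteq Y_1^{(j)} \subseteq Y_1'$. For the \emph{furthermore} clause, suppose $x \in \{\ell\} \cup [s+3, s+2+r]$. If some $z \in (M \cap Z) \setminus Y_1$ existed, then $z \in U_0 \setminus Y_1$ and by the $i=0$ progress, $L^{(0)}(z) \subseteq L(z) \setminus (\{\ell, 0\} \cup [s+3, s+2+r])$, contradicting $c'(z) = x$. Hence $M \cap Y_1 \neq \emptyset$, and the original (R3) of $L$ (applied to $y \in M \cap Y_1$ with $x \in L(y)$) combined with the same walk argument restricted to $Y_1$ gives $M \subseteq Y_1$. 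I expect the coordination of iteration indices with colours in Statement 4, and the bookkeeping needed to construct a single $h$ serving both Statements 3 and 5, to be the main obstacles.
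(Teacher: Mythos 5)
Your proposal is correct and follows essentially the same approach as the paper: iterate \cref{progress basic} across the $s{+}2$ steps, use \cref{BoundedGrowth} to control $|U_i|$ both globally and locally within each $V(A)$ for Statements 3 and 5, and use the list-disjointness from progress at an appropriate step to trap each monochromatic component inside $Y_1'$. The only cosmetic difference is that for Statement 4 you route everything through \cref{progress basic}(4) by picking $j$ with $x \in \{\ell,j\} \cup [s+3,s+2+r]$, whereas the paper separately handles $x \in \{\ell\} \cup [s+3,s+2+r]$ via (R3) of $L$ and $x \in [s+2]\setminus\{\ell\}$ via the fact that $M$ avoids $U_x$ and then invokes (L3) directly; the two are equivalent since the proof of \cref{progress basic}(4) itself just unpacks (L3). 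One small imprecision: you state that the hypothesis $\ngs{Y_1^{(j-1)}} \subseteq U_j$ of \cref{progress basic}(4) ``holds with equality,'' which fails for $j=0$ (where $U_0 = Z$ need not contain $\ngs{Y_1}$); but since any legitimate color $x$ lies in $[s+2+r]$ you can always take $j \geq 1$, so this does not affect the argument.
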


\begin{proof}
Let $f:\mathbb{N}_0\to\mathbb{N}_0$ be the function $f_{s,t}$ in \cref{BoundedGrowth}. 
Let $f_0:\mathbb{N}_0\to\mathbb{N}_0$ be the function defined by $f_0(x):=x+f(x)$ for $x\in \mathbb{N}_0$. 
For $i \geq 1$, let $f_i:\mathbb{N}_0\to\mathbb{N}_0$ be the function defined by $f_i(x):=x+f_{i-1}(x)+f(f_{i-1}(x))$.
Define $h:=f_{s+2}$.

Since $L$ is an $(s,r+2,Y_1)$-list-assignment and $L'$ is obtained from $L$ by repeatedly taking $(W_i,F_i)$-progress for some sets $W_i,F_i$ with $\lvert F_i \rvert \leq r+2$ and $\{\ell\} \cup [s+3,s+2+r] \subseteq F_i$ for all $i$, such that $L'(v) \cap [s+3,s+2+r] = L(v) \cap [s+3,s+2+r]$ for every $v \in V(G)-Y_1'$, we know that $L'$ is an $(s,r+2,Y_1')$-list-assignment of $G$ satisfiying (R5) such that 
\begin{equation*}\{v \in Y_1': L'(v) \cap (\{\ell\} \cup [s+3,s+2+r]) \neq \emptyset\} = \{v \in Y_1: L(v) \cap (\{\ell\} \cup [s+3,s+2+r]) \neq \emptyset\}\end{equation*} 
by \cref{progress basic}. 
So $L'$ satisfies (R1)--(R5), and Statement 2 holds.  Hence Statements 1 and 2 hold.

For every $i \geq 0$, let $U_i, Y_1^{(i)},L^{(i)}$ be the sets and list-assignment mentioned in the definition of a $(Z,\ell)$-growth.
By \cref{BoundedGrowth}, 
it is easy to verify that $\lvert U_i \rvert \leq f(\lvert Y^{(i-1)}_1 \rvert)$ and $\lvert Y^{(i)}_1 \rvert \leq \lvert Y^{(i-1)}_1 \rvert + \lvert U_i \rvert \leq f_i(\lvert Y_1 \cup Z \rvert)$ for every $i \geq 0$ by induction on $i$.
This proves Statement 3.

Let $c'$ be an $L'$-coloring of $G$.
Let $M_i$ be a monochromatic component with respect to $c'$ intersecting $Y_1 \cup Z$ such that all vertices of $M_i$ are colored $i$ for some $i \in [s+r+2]$.
If $i \in \{\ell\} \cup [s+3,s+2+r]$, then $V(M_i) \cap Z-Y_1 = \emptyset$ and $V(M_i) \subseteq Y_1$ since $c'$ is an $L$-coloring and $L$ satisfies (R3).
So we may assume that $i \in [s+2]-\{\ell\}$.
Since $L^{(i)}$ is a $(U_i,\{\ell,i\} \cup [s+3,s+2+r])$-progress of $L^{(i-1)}$, we have $V(M_i) \cap U_i = \emptyset$.
Since $M_i$ is connected and $V(M_i) \cap U_i = \emptyset$ and $V(M_i) \cap (Y_1 \cup Z) \neq \emptyset$, we have either $V(M_i) \subseteq Y_1^{(i-1)}$ or there exists $xy \in E(M_i)$ such that $y \in Y_1^{(i-1)}$ and $x \in N_G(y) \cap \nls{Y_1^{(i-1)}}$.
But the latter is impossible by (L3). 
Hence $V(M_i) \subseteq Y_1^{(i-1)} \subseteq Y_1^{(s+1)} \subseteq Y_1'$.
Therefore, Statement 4 holds.

Let $(A,B)$ be a separation of $G$ with $Z \subseteq V(A \cap B)$.
For every $i \geq 0$,  since $(A,B)$ is a separation,
\begin{equation*}
U_i \cap V(A) \subseteq V(A \cap B) \cup \{v \in V(A)-Y_1^{(i-1)}: \lvert N_G(v) \cap Y_1^{(i-1)} \cap V(A) \rvert \geq s\}.
\end{equation*} 
Since $Z \subseteq V(A \cap B)$, by \cref{BoundedGrowth}, it is easy to prove by induction on $i$ that $\lvert U_i \cap V(A) \rvert \leq \lvert V(A \cap B) \rvert + f(\lvert Y_1^{(i-1)} \cap V(A) \rvert)$ and $\lvert Y_1^{(i)} \cap V(A) \rvert \leq \lvert Y_1^{(i-1)} \cap V(A) \rvert + \lvert U_i \cap V(A) \rvert \leq f_i(\lvert V(A \cap B) \rvert + \lvert Y_1 \cap V(A) \rvert)$.
 Statement 5 holds by taking $i=s+2$. 
\end{proof}

\begin{lemma} \label{NotControllingMinor}
For all $s,t,t' \in\mathbb{N}$, there exist $\theta^* \in {\mathbb N}$ and nondecreasing functions $g^*,\eta^*$ with domain ${\mathbb N_0}$ such that if $G$ is a graph with no $K_{s,t}$ subgraph, $\theta \in {\mathbb N}$ with $\theta \geq \theta^*$, $\eta \in {\mathbb N}$ with $\eta \geq \eta^*(\theta)$, $Y_1 \subseteq V(G)$ with $3\theta<\lvert Y_1 \rvert \leq \eta$, $\ell \in [0,s+2]$, $r \in {\mathbb N_0}$, $L$ is an $(s,Y_1,\ell,r)$-list-assignment of $G$, $g$ is a nondecreasing function with domain ${\mathbb N_0}$ and with $g \geq g^*$, and $\T$ is a tangle in $G$ of order $\theta$ that does not control a $K_{t'}$-minor, where $\T = \{(A,B): \lvert V(A \cap B) \rvert < \theta, \lvert V(A) \cap Y_1 \rvert \leq 3\theta\}$, then either:
	\begin{enumerate}
		\item there exists an $(\eta,g)$-bounded $L$-coloring of $G$ such that for every $x \in [s+3,s+2+r]$, the set of vertices colored $x$ is a stable set, or 
		\item there exist $(A^*,B^*) \in \T$, a set $Y_{A^*}$ with $\lvert Y_{A^*} \rvert \leq \eta^*(\theta)$ and $Y_1 \cap V(A^*) \subseteq Y_{A^*} \subseteq V(A^*)$, and an $(s,Y_{A^*},\ell,r)$-list-assignment $L_{A^*}$ of $G[V(A^*)]$ such that there exists no $(\eta,g)$-bounded $L_{A^*}$-coloring of $G[V(A^*)]$ such that for every $x \in [s+3,s+2+r]$, the set of vertices colored $x$ is a stable set.
	\end{enumerate}
\end{lemma}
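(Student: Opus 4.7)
The plan is to apply the Graph Minor Structure Theorem (\cref{structure minor free}) to convert the tangle into a bounded-layered-treewidth instance, then color via \cref{bounded layered tw}. First I would instantiate \cref{structure minor free} with $H = K_{t'}$ to obtain constants $\kappa, \rho, \xi, \theta_0$, and set $\theta^* := \max\{\theta_0, t'\}$. Since $\T$ has order $\theta \geq \theta^*$ and does not control a $K_{t'}$-minor, \cref{structure minor free} supplies a set $Z \subseteq V(G)$ with $\lvert Z \rvert \leq \xi$ and a $(\T - Z)$-central segregation $\Se$ of $G - Z$ of type $(\kappa, \rho)$ with a proper arrangement in a surface $\Sigma$ in which $K_{t'}$ cannot be embedded; in particular, $\Sigma$ has Euler genus bounded by a function of $t'$.

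Next I would convert this structure into a layered-treewidth decomposition. Each of the at most $\kappa$ vortices in $\Se$ admits a vortical decomposition of adhesion at most $\rho$ by \cref{vortical}; combining these with the Dujmović-Morin-Wood argument (referenced in \S\ref{Setup}) for the bounded-genus ``planar'' part of the arrangement, I would obtain an enlarged apex set $Z' \supseteq Z$ of bounded size $\xi'$ depending only on $\kappa, \rho$, and the genus of $\Sigma$, together with a $Z'$-layering $\V$ of $G$ and a tree decomposition of $G - Z'$ of $\V$-width at most some constant $w = w(s, t, t')$.

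Then I would perform a $(Z', \ell)$-growth of $L$ via \cref{Z growth}, producing an extended precolored set $Y_1''$ and an $(s, Y_1'', \ell, r)$-list-assignment $L''$ with $\lvert Y_1'' \rvert \leq h(\eta + \xi')$ by \cref{Z growth}(3). By \cref{Z growth}(4), every $L''$-coloring of $G$ has each monochromatic component meeting $Y_1 \cup Z'$ contained in $Y_1''$, and has colors in $\{\ell\} \cup [s+3, s+2+r]$ appearing only on $Y_1$. After the growth, every vertex outside $Y_1''$ has at most $s-1$ neighbors in $Y_1''$, so by (R5) its list lies entirely in $[s+2]$, making $L''$ restricted away from $Y_1''$ compatible with the $(s, \V)$-setup of \cref{bounded layered tw} up to a simple reindexing of each layer's forbidden color. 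Choosing $\eta^*(\theta)$ and $g^*$ large enough that the clustering constant from \cref{bounded layered tw} (applied with $k := h(\eta + \xi')$ and $\xi := \xi'$) is at most $\eta^2 g(\eta)$, an application of \cref{bounded layered tw} would yield the $L$-coloring of $G$ witnessing outcome~1.

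The hard part will be arranging the $(s, \V)$-compatibility of $L''$ — in particular the per-layer forbidden-color condition — without destroying the stable-set constraints on the colors in $[s+3, s+2+r]$ from (R4), and handling the asymmetry between vortex bags and the surface embedding uniformly across all admissible $r$. Outcome~2 would arise as a fallback: if at some stage the required compatibility fails to extend consistently to all of $G$, then the $(\T - Z)$-centrality of $\Se$ allows us to extract $(A^*, B^*) \in \T$ from the offending society together with the apex set, and to restrict $L''$ to an $(s, Y_{A^*}, \ell, r)$-list-assignment on $G[V(A^*)]$ with $Y_{A^*}$ of size at most $\eta^*(\theta)$.
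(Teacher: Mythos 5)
Your proposal starts from the right ingredients --- \cref{structure minor free} applied to $K_{t'}$, \cref{vortical}, a $(Z,\ell)$-growth via \cref{Z growth}, and \cref{bounded layered tw} --- but there is a structural error at the heart of the argument that makes it unworkable as written.

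The main gap is your claim that after enlarging the apex set to $Z'$ you obtain a $Z'$-layering of $G$ itself with a tree decomposition of $G-Z'$ of bounded $\V$-width. This is false. The Graph Minor Structure Theorem gives a $(\T-Z)$-central segregation $\Se$ of $G-Z$ with a proper arrangement in a bounded-genus surface, but the societies $(S,\Omega)\in\Se$ are arbitrary graphs; only their interfaces $\overline{\Omega}$ are constrained (to have size at most~3 or to be vortices). The interiors of the society pieces can have unbounded layered treewidth even after removing any bounded set. What has bounded layered treewidth is only a ``skeleton'' of $G$: in the paper this is $G'=G[(\bigcap_{(A,B)\in\L}V(B))\cup\bigcup_{(A,B)\in\L}Z_A]$, where $\L$ is a location of separations indexed by the societies (and by the vortex bags), and each $Z_A$ is a thick shell grown by iterating $\ngs{\cdot}$ exactly $s+3$ times from $(Y_1'\cap V(A))\cup V(A\cap B)$. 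So \cref{bounded layered tw} is applied only to $G'$, not to $G-Z'$, and the introductory sentence in Section~2 of the paper that you seem to be relying on (``ignoring the apex vertices, our graph has bounded layered treewidth'') is explicitly only a rough gloss.

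Consequently, your plan is missing the recursion that colors the society interiors. The paper's actual argument negates both outcomes of the lemma, so that for every $(A,B)\in\T$ a good $(\eta,g)$-bounded coloring exists. It then, for each $(A,B)\in\L$, builds an $(s,Y_A,\ell,r)$-list-assignment $L_A$ on $G[V(A)]$ whose restriction to the shell $Z_A$ is precolored to agree with the skeleton coloring $c_\L$ of $G'$; the recursive colorings $c_A$ of the pieces are then glued to $c_\L$. The thick $(s+3)$-fold growth of $Z_A$ (together with the $(s,\V)$-compatibility of $L_\L$, in which consecutive blocks of $Z_A$ land in consecutive layers so a forbidden color changes across the shell) is precisely what forces any monochromatic component crossing a separation to be contained in $G[Z_A]$, so that the glued coloring stays $(\eta,g)$-bounded. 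Your proposal has none of this: it treats outcome~2 as a vague ``compatibility-failure'' fallback, whereas in the paper outcome~2 is a uniform inductive hypothesis that good colorings of all pieces exist, used constructively to assemble the coloring. You correctly identify the $(s,\V)$-compatibility bookkeeping as ``the hard part,'' but the harder part --- that the body of each society must be handled by recursion and its coloring meshed with the skeleton via a precolored shell --- is absent from the plan.
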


\begin{proof}
Define the following:
\begin{itemize}
	\item Let $f$ be the function $f_{s,t}$ in \cref{BoundedGrowth}.
	\item Let $f_0:\mathbb{N}_0\to\mathbb{N}_0$ be the identity function. For every $i\in\mathbb{N}$, let $f_i:\mathbb{N}_0\to \mathbb{N}_0$ be the function defined by $f_i(x):=f_{i-1}(x)+f(f_{i-1}(x))$.
	\item Let $\kappa_0,\rho_0,\xi_0,\theta_0$ be the integers $\kappa,\rho,\xi,\theta$  in \cref{structure minor free} taking $H=K_{t'}$.
	\item Let $h:\mathbb{N}_0\to\mathbb{N}_0$ be the function in \cref{Z growth} taking $s=s$ and $t=t$.
	\item Let $\theta^*:= \theta_0+2\rho_0+\xi_0+3$.
	\item Let $\eta^*:\mathbb{N}_0\to\mathbb{N}_0$  be the function defined by $\eta^*(x):= h(f_3(h(4x)+x))$ for every $x \in {\mathbb N_0}$. 
	\item Let $\sigma$ be the maximum Euler genus of a surface in which $K_{t'}$ cannot be embedded.
	\item For every $x \in {\mathbb N_0}$, let $w_0(x):=(3\kappa_0(2\sigma+4)(2\rho_0+1)+1) \cdot (f_{1}(h(4x)+x)+1)$.
	\item For every $x \in {\mathbb N_0}$, let $\eta_1(x)$ be the number $\eta^*$ in \cref{bounded layered tw} taking $s=s$, $t=t+f_1(h(4x)+x)+(s+2)^2$, $w=(s+2)^2 \cdot w_0(x)$, $k=h(x+\xi_0)+s+2$ and $\xi=h(x+\xi_0)+s+2$.
	\item Let $g^*:\mathbb{N}_0\to\mathbb{N}_0$ be the function defined by $g^*(x):= \eta_1(2x) \cdot (1+f_1(h(4x+\xi_0)+x+\xi_0) \cdot \eta^*(x))$ for every $x \in {\mathbb N_0}$.  
\end{itemize}

Let $G$ be a graph with no $K_{s,t}$ subgraph, $\theta \in {\mathbb N}$ with $\theta \geq \theta^*$, $\eta \in {\mathbb N}$ with $\eta \geq \eta^*(\theta)$, $Y_1$ a subset of $V(G)$ with $3\theta< \lvert Y_1 \rvert \leq \eta$, $\ell \in [0,s+2]$, $r \in {\mathbb N_0}$, $g$ a nondecreasing function with domain ${\mathbb N}_0$ and with $g \geq g^*$, and $L$ an $(s,Y_1,\ell,r)$-list-assignment of $G$.
Let $\T = \{(A,B): \lvert V(A \cap B) \rvert < \theta, \lvert V(A) \cap Y_1 \rvert \leq 3\theta\}$.
Assume that $\T$ is a tangle in $G$ of order $\theta$ that does not control a $K_{t'}$-minor.

Suppose that there exists no $(\eta,g)$-bounded $L$-coloring of $G$ such that for every $x \in [s+3,s+2+r]$, the set of vertices colored $x$ is a stable set, and suppose that for every separation $(A,B) \in \T$, every set $Y_{A}$ with $\lvert Y_A \rvert \leq \eta^*(\theta)$ and $Y_1 \cap V(A) \subseteq Y_A \subseteq V(A)$ and every $(s,Y_A,\ell,r)$-list-assignment $L_A$ of $G[V(A)]$, there exists an $(\eta,g)$-bounded $L_A$-coloring of $G[V(A)]$ such that for every $x \in [s+3,s+2+r]$, the set of vertices colored $x$ is a stable set.

Since $\T$ does not control a $K_{t'}$-minor, by \cref{structure minor free}, there exist $Z \subseteq V(G)$ with $\lvert Z \rvert \leq \xi_0$, a $(\T-Z)$-central segregation $\Se$ of $G-Z$ of type $(\kappa_0,\rho_0)$, and a proper arrangement of $\Se$ in a surface $\Sigma$ in which $K_{t'}$ cannot be embedded. Let $\Se_1 := \{(S,\Omega) \in \Se: \lvert \overline{\Omega} \rvert \leq 3\}$, and let $\Se_2:= \Se-\Se_1$. 
Since $\Se$ is of type $(\kappa_0,\rho_0)$, $\lvert \Se_2 \rvert \leq \kappa_0$ and every member of $\Se_2$ is a $\rho_0$-vortex.

For each $(S,\Omega) \in \Se_1$, let $(A_S,B_S)$ be the separation of $G$ such that $S \subseteq A_S$, $V(A_S) = V(S) \cup Z$, $V(A_S \cap B_S)=\overline{\Omega} \cup Z$, and subject to these conditions, $\lvert E(A_S) \rvert$ is minimal.
Let $\L_1:=\{(A_S,B_S):(S,\Omega) \in \Se_1\}$.
By \cref{vortical}, for each $(S,\Omega) \in \Se_2$, there exists a vortical decomposition $(P_S,\X_S)$ of $(S,\Omega)$ of adhesion at most $\rho_0$.
For each $(S,\Omega) \in \Se_2$ and each bag $X$ of $(P_S,\X_S)$, let 
$$\partial X := (X \cap \overline{\Omega}) \cup \{v \in X: v \text{ belongs to a bag of }(P_S,\X_S) \text{ other than }X\}.$$
Let $(A_{S,X},B_{S,X})$ be the separation of $G$ such that $V(A_{S,X}):=X \cup Z$ and $V(A_{S,X} \cap B_{S,X}) := \partial X \cup Z$, and subject to these conditions, $E(A_{S,X})$ is minimal.
Let $\L_2:=\{(A_{S,X},B_{S,X}): (S,\Omega) \in \Se_2, X \in \X_S\}$ and  $\L:=\L_1 \cup \L_2$.
Note that every member of $\L$ has order at most $\xi_0+3+2\rho_0 < \theta^* \leq \theta$.
Since $\Se$ is $(\T-Z)$-central, $\L \subseteq \T$.
In addition, $\L$ is a location with $Z \subseteq V(A \cap B)$ for every $(A,B) \in \L$.

Let $L'$ be a $(Z,\ell)$-growth of $L$, and let $Y_1' :=\{v \in V(G): \lvert L'(v) \rvert=1\}$.
By \cref{Z growth}, $\lvert Y_1' \cap V(A) \rvert \leq h(\lvert V(A \cap B) \rvert + \lvert Y_1 \cap V(A) \rvert) \leq h(4\theta)$ for every $(A,B) \in \L$.
For each $(A,B) \in \L$, define:
$$U_A^{(0)}:=(Y_1' \cap V(A)) \cup V(A \cap B),\quad 
U_A^{(1)} := \ngs[{G[V(A)]}]{U_A^{(0)}}, \quad\text{and}\quad
Z_A := U_A^{(0)} \cup U_A^{(1)}.$$
Note that for every $(A,B) \in \L$, $\lvert Z_A \rvert \leq f_{1}(h(4\theta)+\theta)$.

Let $G'=G[\bigcup_{(A,B) \in \L}Z_A]$.
Since $Y_1' \cap V(A) \subseteq Z_A$ for every $(A,B) \in \L$, we have $Y_1' \subseteq V(G')$.

\begin{claim}\label{NotControllingMinorClaim1}
	There exists a $Y_1'$-layering $\V=(V_1,V_2,\dots,V_{\lvert \V \rvert})$ of $G'$ and a tree decomposition $(T,\X)$ of $G'$ with $\V$-width at most $w_0(\theta)$ such that for every $(A,B) \in \L$, there exist $\epsilon_A \in \{1,-1\}$ and $i_A\in\mathbb{N}$ such that:
	\begin{itemize}
		\item if $(A,B) \in \L_1$, then $V(A \cap B)-Y_1' \subseteq V_{i_A} \cup V_{i_A+\epsilon_A}$ and $\lvert V(A \cap B) \cap V_{i_A+\epsilon_A} \rvert \leq 1$,
		\item if $(A,B) \in \L_2$, then $V(A \cap B)-Y_1' \subseteq V_{i_A}$,
		\item $U_A^{(1)} \subseteq V_{i_A+\epsilon_A}$, and
		\item some bag of $(T,\X)$ contains $U_A^{(1)}$. 
	\end{itemize}
\end{claim}

\begin{proof}
Let $G_0$ be the graph obtained from $G[\bigcup_{(S,\Omega) \in \Se} \overline{\Omega}]$ by for each $(S,\Omega) \in\Se$, adding a new vertex $v_S$ adjacent to all vertices in $\overline{\Omega}$ and new edges such that there exists a cycle $C_S$ with $V(C_S)=\overline{\Omega}$ passing through its vertices in the order $\Omega$.
Since there exists a proper arrangement of $\Se$ in $\Sigma$, $G_0$ can be embedded in $\Sigma$.
Since $K_{t'}$ cannot be embedded in $\Sigma$, the Euler genus of $\Sigma$ is at most $\sigma$.
By \cite[Theorem~12]{DMW17}, there exists a layering $\V_0=(V_{0,1},V_{0,2},\dots,V_{0,\lvert \V_0 \rvert})$ of $G_0$ and a tree decomposition $(T_0,\X_0)$ of $G_0$ with $\V_0$-width at most $2\sigma+3$.
For each $p \in V(T_0)$, let $X_{0,p}$ be the bag at $p$ in $(T_0,\X_0)$.

For every $(S,\Omega) \in \Se_1$, since $\{v_S\} \cup \overline{\Omega}$ forms a clique in $G_0$, there exists $p_S \in V(T_0)$ such that $X_{0,p_S}$ contains this clique.
Let $T_0'$ be the tree obtained from $T_0$ by for each $(S,\Omega) \in \Se_1$, adding a new node $t_S$ and a new edge $t_Sp_S$.
For each $p \in V(T_0)$, let $X'_{0,p}:=X_{0,p}-\{v_S: (S,\Omega) \in \Se_1\}$.
For each $p \in V(T_0')-V(T_0)$, we know that $p=t_S$ for some $(S,\Omega) \in \Se_1$; define $X'_{0,p}:=\{v_S\} \cup \overline{\Omega}$.
Since $N_{G_0}(v_S)=\overline{\Omega}$ for each $(S,\Omega) \in \Se_1$, $(T_0',\X'_0)$ is a tree decomposition of $G_0$ of $\V_0$-width at most $2\sigma+3$, where $\X'_0=(X_{0,p}': p \in V(T_0'))$.

For each $(S,\Omega) \in \Se_1$, let $i_S$ be the index such that $v_S \in V_{0,i_S}$; since $\{v_S\} \cup \overline{\Omega}$ forms a clique in $G_0$, there exists $\epsilon_S \in \{-1,1\}$ such that $\overline{\Omega} \subseteq V_{0,i_S} \cup V_{0,i_S+\epsilon_S}$.
Let $\V_0':=(V_{0,1}',V_{0,2}',\dots,V_{0,\lvert \V_0 \rvert}')$ be the layering of $G_0$ obtained from $\V_0$ by moving $v_S$ from $V_{0,i_S}$ to $V_{0,i_S+\epsilon_S}$ for each $(S,\Omega) \in \Se_1$ with $\lvert \overline{\Omega} \cap V_{i_S+\epsilon_S} \rvert > \lvert \overline{\Omega} \cap V_{i_S} \rvert$.
Since for each $(S,\Omega) \in \Se_1$, the only bag in $(T_0',\X_0')$ containing $v_S$ has size $\lvert \overline{\Omega} \rvert+1 \leq 4$, the $\V_0'$-width of $(T_0',\X_0')$ is most $\max\{2\sigma+3,4\} \leq 2\sigma+4$.

Let $G_1$ be the graph obtained from $G[\bigcap_{(A,B) \in \L} V(B)]-Z$ by adding a new vertex $v_A$ adjacent to all vertices in $V(A \cap B)-Z$ for each $(A,B) \in \L_1$, and adding a new vertex $v_S$ adjacent to all vertices in $\overline{\Omega}$ for each $(S,\Omega) \in \Se_2$.
Note that $V(G_0) \subseteq V(G_1)$.
For each $i \in [\lvert \V'_0 \rvert]$, let 
\begin{equation*}V_{1,i}:=V'_{0,i} \cup \bigcup_{(S,\Omega) \in \Se_2, v_S \in V'_{0,i}} (V(G_1) \cap V(S)-\overline{\Omega}).
\end{equation*}
Let $\V_1:=(V_{1,1},V_{1,2},\dots,V_{1,\lvert \V_0' \rvert})$.
Since $N_{G_1}(V(G_1) \cap V(S)-\overline{\Omega}) \subseteq \overline{\Omega} \subseteq N_{G_1}(v_S)$ for every $(S,\Omega) \in \Se_2$, 
we have that $\V_1$ is a layering of $G_1$.
Note that for each $(S,\Omega) \in \Se_2$ and $X \in \X_S$, $X$ is the $j$-th bag in $\X_S$ for some $j$, so $X$ contains the $j$-th vertex in $\Omega$; we denote the $j$-th vertex in $\Omega$ by $v_X$, so $v_X \in X$.
For each node $p \in V(T'_0)$, define 
\begin{equation*}
X_{1,p} :=X'_{0,p} \cup \bigcup_{(S,\Omega) \in \Se_2}\bigcup_{X \in \X_S}\bigcup_{v_X \in X'_{0,p}} (X \cap V(G_1)).
\end{equation*} 
Since $(T'_0,\X'_0)$ is a tree decomposition of $G_0$ and $(P_S,\X_S)$ is a vortical decomposition of $S$ for every $(S,\Omega) \in \Se_2$, $(T'_0,\X_1)$ is a tree decomposition of $G_1$ by the existence of $C_S$, where for every $p \in V(T'_0)$, the bag of $(T'_0,\X_1)$ at $p$ is $X_{1,p}$.
Since $(P_S,\X_S)$ has adhesion at most $\rho_0$ for each $(S,\Omega) \in \Se_2$, the $\V_1$-width of $(T'_0,\X_1)$ is at most $(2\sigma+4)(2\rho_0+1)$.

For each $(S,\Omega) \in \Se_2$, since $v_S$ is adjacent to all vertices in $\overline{\Omega}$, there exists $i_S \in [\lvert \V_1 \rvert]$ such that $v_S \in V_{1,i_S}$, and $\overline{\Omega}$ is contained in $V_{1,i_S-1} \cup V_{1,i_S} \cup V_{1,i_S+1}$.
Let $\V_1'$ be the layering of $G_1$ obtained from $\V_1$ by for each $(S,\Omega) \in \Se_2$, merging $V_{1.i_S-1} \cup V_{1,i_S} \cup V_{1,i_S+1}$ into a layer.
Since $\lvert \Se_2 \rvert \leq \kappa_0$, each layer in $\V_1'$ is a union of at most $3\kappa_0$ layers in $\V_1$.
Hence the $\V_1'$-width of $(T_0',\X_1)$ is at most $3\kappa_0(2\sigma+4)(2\rho_0+1)$.
Denote $\V_1'=(V'_{1,1},V'_{1,2},...,V'_{1,\lvert \V_1'\rvert})$.

For each $(A,B) \in \L_1$, let $i_A$ be the index $i$ such that $v_A \in V'_{1,i}$; for each $(A,B) \in \L_2$, let $i_A$ be the index $i$ such that $v_S \in V'_{1,i}$, where $(S,\Omega)$ is the member in $\Se_2$ such that $V(A)-Z \subseteq V(S)$.
Note that by the definition of $\V_1'$, for each $(A,B) \in \L_2$, $V(A \cap B)-Z \subseteq V'_{1,i_A}$.
By adding empty layers, we may assume that $\lvert \V_1' \rvert \geq 2+\max_{(A,B) \in \L}i_A$ and $\min_{(A,B) \in \L}i_A \geq 3$.

For each $(A,B) \in \L_1$, since $\lvert V(A \cap B) -Z \rvert \leq 3$, we have that $(V(A \cap B)-Z) \cup \{v_A\}$ forms a clique in $G_1$; since $v_A \in V'_{1,i_A}$, there exists $\epsilon_A \in \{1,-1\}$ such that $V(A \cap B)-Z \subseteq V'_{1,i_A} \cup V'_{1,i_A+\epsilon_A}$.
Note that by the definition of $\V_0'$ and $\V_1'$, for each $(A,B) \in \L_1$, $\lvert V(A \cap B) \cap V'_{1,i_A} \rvert \geq \lvert V(A \cap B) \cap V'_{1,i_A+\epsilon_A} \rvert$, so $\lvert V(A \cap B) \cap V'_{1,i_A+\epsilon_A} \rvert \leq \lfloor \frac{\lvert V(A \cap B)-Z \rvert}{2} \rfloor \leq \lfloor \frac{3}{2} \rfloor= 1$.
For each $(A,B) \in \L_2$, define $\epsilon_A:=1$.

For each $j \in [\lvert \V_1' \rvert]$, define 
\begin{equation*}
	V_{2,j} := \big(V'_{1,j} \cup \bigcup_{(A,B) \in \L,i_A+\epsilon_A=j} U_A^{(1)} \big) \cap V(G').
\end{equation*}
Let $\V_2:=(V_{2,1},V_{2,2},\dots,V_{2,\lvert \V_1' \rvert})$.
For each $i \in [\lvert \V_2 \rvert]$, let $V_i:=V_{2,i}-Y_1'$.
Define $\V:=(V_1,V_2,...,V_{\lvert \V_2 \rvert})$.

Note that for each $(A,B) \in \L$, we have $V(A \cap B)-Y_1' \subseteq (V'_{1,i_A} \cup V'_{1,i_A+\epsilon_A})-Y_1' \subseteq (V_{2,i_A} \cup V_{2,i_A+\epsilon_A})-Y_1' \subseteq V_{i_A} \cup V_{i_A+\epsilon_A}$ and $\lvert V(A \cap B) \cap V_{i_A+\epsilon_A} \rvert \leq \lvert V(A \cap B) \cap V_{1,i_A+\epsilon_A}' \rvert \leq 1$. 
Moreover, if $(A,B) \in \L_2$, then $V(A \cap B)-Y_1' \subseteq V'_{1,i_A}-Y_1' \subseteq V_{2,i_A}-Y_1'=V_{i_A}$. 
For every $(A,B) \in \L$, since $U_A^{(1)} \subseteq V(A)-V(B)$, we have $N_{G'}(U_A^{(1)})-Y_1' \subseteq V(A \cap B)-Y_1' \subseteq (V_{i_A} \cup V_{i_A+\epsilon_A})-Y_1'$.
So $\V$ is a $Y_1'$-layering of $G'$ since $U_A^{(1)} \subseteq V_{i_A+\epsilon_A}$ for every $(A,B) \in \L$.

For each $p \in V(T'_0)$, define 
\begin{equation*}
X_{2,p}:= \big(X_{1,p} \cup \bigcup_{(A,B) \in \L_1, v_A \in X_{1,p}} \!\!\!\!\!\!\!\!\!\!\!\! (Z_A-V(G_1)) \quad
\cup 
\bigcup_{(A,B) \in \L_2,V(A \cap B)-Z \subseteq X_{1,p}, V(A \cap B)-Z \neq \emptyset} \!\!\!\!\!\!\!\!\!\!\!\!\!\!\!\!\!\!\!\!\!\! (Z_A-V(G_1)) \big) \quad \cap\quad  V(G').
\end{equation*}
Let $p_0$ be a node of $T_0'$.
Let $T$ be the tree obtained from $T_0'$ by adding, for each $(A,B) \in \L_2$ with $V(A \cap B)-Z=\emptyset$, a new node $t_A$ and a new edge $t_Ap_0$.
For each $p \in V(T_0')$, let $X_p:=X_{2,p} \cup Y_1'$; for each $p \in V(T)-V(T_0')$, we know $p=t_A$ for some $(A,B) \in \L_2$ with $V(A \cap B)-Z=\emptyset$; let $X_p:=(Z_A-V(G_1)) \cup Y_1'$. Let $\X:=(X_p: p \in V(T))$.

Note that for every $(A,B) \in \L_1$, we have $N_{G'}(Z_A-V(G_1)) \subseteq N_{G_1}(v_A) \cup Y_1'$; for every $(A,B) \in \L_2$, we have $N_{G'}(Z_A-V(G_1)) \subseteq V(A \cap B) \cup Y_1'$. So $(T,\X)$ is a tree decomposition of $G'$.
For every $(A,B) \in \L$, since $U_A^{(1)} \subseteq Z_A-V(G_1)$, some bag of $(T,\X)$ contains $U_A^{(1)}$.

Since $\V$ is a $Y_1'$-layering, and $\lvert Z_A \rvert \leq f_{1}(h(4\theta)+\theta)$ for every $(A,B) \in \L$, the $\V$-width of $(T,\X)$ is at most $(3\kappa_0(2\sigma+4)(2\rho_0+1)+1) \cdot (f_{1}(h(4\theta)+\theta)+1) \leq w_0(\theta)$.
This proves the claim.
\end{proof}

Let $\V:=(V_1,V_2,\dots,V_{\lvert \V \rvert})$ be the $Y_1'$-layering of $G'$ mentioned in \cref{NotControllingMinorClaim1}.
For every $(A,B) \in \L$, let $i_A$ and $\epsilon_A$ be the integers mentioned in \cref{NotControllingMinorClaim1}.

Let $G^*$ be the graph obtained from $G'$ by:
	\begin{itemize}
		\item adding $s+2$ new vertices $z_1^*,z_2^*,...,z_{s+2}^*$,
		\item for each $(A,B) \in \L$ and each 2-element subset $S$ of $[2,s+2]$, 
			\begin{itemize}
				\item adding a new vertex $u^*_{A,S}$ and all edges between $u^*_{A,S}$ and $U_A^{(1)}$, and
				\item adding new edges $u^*_{A,S}z^*_i$ for all $i \in [s+2]$ with $i \not\equiv i_A+j\epsilon_A$ (mod $s+2$), where $j\in S \cup \{1\}$.
			\end{itemize}
	\end{itemize}
Let $Z^*=\{z^*_{i}: i \in [s+2]\}$.
For each $(A,B) \in \L$, let $U_A^*=\{u^*_{A,S}: S \subseteq [2,s+2], \lvert S \rvert=2\}$.

\begin{claim}\label{claim_G^*_Kst}
$G^*$ does not contain a $K_{s,t+f_1(h(4\theta)+\theta)+(s+2)^2}$-subgraph.
\end{claim}	

\begin{proof}
Suppose to the contrary that some subgraph $Q$ of $G^*$ is isomorphic to $K_{s,t+f_1(h(4\theta)+\theta)+(s+2)^2}$.
Let $\{S,T\}$ be a bipartition of $V(Q)$ with $\lvert S \rvert=s$.

If there exist $(A,B) \in \L$ and $v \in S \cap (U_A^* \cup (V(A)-V(B))) \neq \emptyset$, then since $N_{G^*}(v) \subseteq Z_A \cup U_A^* \cup Z^*$, $v$ is adjacent in $G^*$ to at most $f_1(h(4\theta)+\theta)+{s+1 \choose 2}+(s+2) < f_1(h(4\theta)+\theta)+(s+2)^2$ vertices, so $v \not \in S$, a contradiction.

So $S \cap (U_A^* \cup (V(A)-V(B)))=\emptyset$ for every $(A,B) \in \L$.
Suppose that there exist $(A,B) \in \L$ and $u \in U_A^* \cap V(Q)$.
Then $u \in T$.
Since $u$ is adjacent to at most $s+2-3=s-1$ vertices in $Z^*$, $U_A^{(1)} \cap S \neq \emptyset$.
But $U_A^{(1)} \subseteq V(A)-V(B)$, a contradiction.

So there does not exist $(A,B) \in \L$ with $U^*_{A} \cap V(Q) \neq \emptyset$.
Hence if some vertex in $Z^*$ is in $V(Q)$, then it has no neighbor in $Q$, a contradiction.
So $V(Q) \cap Z^*=\emptyset$.
Therefore, $Q$ is a subgraph of $G'$, a contradiction.
\end{proof}

For each $i \in [\lvert \V \rvert]$, let $V^*_i = V_i \cup \bigcup_{(A,B) \in \L, U_A^{(1)} \subseteq V_i} U_A^*$.
Let $\V^*=(V_1^*,V_2^*,...,V_{\lvert \V \rvert}^*)$.
Note that $\V^*$ is a $(Y_1' \cup Z^*)$-layering of $G^*$. 

\begin{claim}\label{claim_G^*_ltw}
There exists a tree decomposition of $G^*$ with $\V^*$-width at most $w_0(\theta) \cdot (s+2)^2$.
\end{claim}	

\begin{proof}
Let $(T,\X)$ be the tree decomposition of $G'$ with $\V$-width at most $w_0(\theta)$ mentioned in \cref{NotControllingMinorClaim1}.
Add $Z^*$ into all bags of $(T,\X)$.
For each $(A,B) \in \L$, \cref{NotControllingMinorClaim1} implies that there exists a node $p_A$ of $T$ such that the bag at $p_A$ contains $U_A^{(1)}$. 
If $U_A^{(1)} \neq \emptyset$, then add $U_A^*$ into the bag at $p_A$; if $U_A^{(1)}=\emptyset$, then add a new node into $T$ adjacent to $p_A$ with bag $Z^* \cup U_A^*$.
We obtain a tree decomposition of $G^*$ with $\V^*$-width at most $w_0(\theta) \cdot (s+2)^2$.
\end{proof}

Let $L_\L$ be the following  list-assignment of $G^*$: 
	\begin{itemize}
		\item For every $v \in V(G')-Y_1'$, let $L_\L(v):=L'(v) \cap [s+2]-\{i\}$, where $i \in [s+2]$ is the number such that $v \in V_j$ and $j \equiv i$ (mod $s+2$).
		\item For every $v \in Y_1'$, let $L_\L(v):=L'(v) \cap [s+2]$.
		\item For every $i \in [s+2]$, let $L_\L(z^*_i)=\{i\}$.
		\item For every $(A,B) \in \L$ and $S \subseteq [2,s+2]$ with $\lvert S \rvert=2$, let $L_\L(u^*_{A,S})=\{i \in [s+2]: i \equiv i_A+j\epsilon_A$ (mod $s+2$) with $j\in S\}$.
	\end{itemize} 

Let $G''$ be the subgraph of $G^*$ induced by $\{v \in V(G^*): L_\L(v) \neq \emptyset\}$.
Note that for every $y \in Y_1'$, $y \in V(G'')$ if and only if $L'(y) \in [s+2]$.

\begin{claim}\label{claim_size_not_Y'}
For every $v \in V(G')-Y_1'$, $$\lvert L_\L(v) \rvert \geq \lvert L'(v) \rvert - r + \lvert N_G(v) \cap Y_1' \rvert - \lvert N_G(v) \cap Y_1' \cap V(G'') \rvert-1.$$
\end{claim}	

\begin{proof}
Since $L'$ satisfies (R5), for every $v \in V(G')-Y_1'$, 
\begin{align*}
	\lvert L_\L(v) \rvert & \geq \lvert L'(v) \rvert - \lvert L'(v) \cap [s+3,s+2+r] \rvert-1 \\
	& = \lvert L'(v) \rvert - (r-\lvert \{y \in N_{G}(v) \cap Y_1': L'(y) \subseteq [s+3,s+r+2]\}\rvert)-1 \\
	& = \lvert L'(v) \rvert - (r-\lvert N_{G}(v) \cap Y_1'-V(G'') \rvert)-1 \\
	& = \lvert L'(v) \rvert - r + \lvert N_G(v) \cap Y_1' \rvert - \lvert N_G(v) \cap Y_1' \cap V(G'') \rvert-1.\qedhere
\end{align*}
\end{proof}

For every $v \in V(G')-Y_1'$, since $L'$ is an $(s,r+2,Y_1')$-list-assignment, we know $\lvert L'(v) \rvert \geq r+3$, so $\lvert L_\L(v) \rvert \geq 2$ by \cref{claim_size_not_Y'}, and hence $v \in V(G'')$.
So $V(G^*)-V(G'') \subseteq Y_1'$.

Let $Y_1'' = (V(G'') \cap Y_1') \cup Z^*$.
Note that $Y_1'' = \{y \in V(G''): \lvert L_\L(y) \rvert=1\}$.

\begin{claim}\label{claim_size_weird_s}
For every $y \in V(G'') \cap N_{G''}^{<s}(Y_1'')$, there exists a subset $L_{\L,G''}(y)$ of $L_\L(y)$ with $\lvert L_{\L,G''}(y) \rvert = s+1-\lvert N_{G''}(y) \cap Y_1'' \rvert$ such that $L_{\L,G''}(y) \cap L_\L(u)=\emptyset$ for every $u \in N_{G''}(y) \cap Y_1''$.
\end{claim}	

\begin{proof}
Let $y \in V(G'') \cap N_{G''}^{<s}(Y_1'')$. 

First suppose $y \in N_{G''}(Z^*)$.
Then $y \in U_A^*$ for some $(A,B) \in \L$, so $\lvert L_\L(y) \rvert=2$ and $y=u^*_{A,S}$ for some $S \subseteq [2,s+2]$ with $\lvert S \rvert=2$.
Hence $N_{G''}(y) \cap Y_1'' = N_{G^*}(y) \cap Z^* = \{z^*_i: i \in [s+2], i \not \equiv i_A+j\epsilon_A, j \in S \cup \{1\}\}$.
So $\lvert N_{G''}(y) \cap Y_1'' \rvert = s+2-3=s-1$ and $L_\L(y) \cap L_\L(u)=\emptyset$ for every $u \in N_{G''}(y) \cap Y_1''$.
Hence $L_\L(y)$ is a desired subset $L_{\L,G''}(y)$ of $L_\L(y)$.

Now assume $y \not \in N_{G''}(Z^*)$.
Hence $y \in V(G')$ and $N_{G''}(y) \cap Y_1'' = N_{G''}(y) \cap Y_1' \cap V(G'') = N_{G}(y) \cap Y_1' \cap V(G'')$.
If $y \in N_{G}^{<s}(Y_1') \cup (V(G)-N_G[Y_1'])$, then since $L'$ is an $(s,r+2,Y_1')$-list-assignment of $G$, by \cref{claim_size_not_Y'},
	\begin{align*}
		\lvert L_\L(y) \rvert 
		& \geq \lvert L'(y) \rvert - r + \lvert N_G(y) \cap Y_1' \rvert - \lvert N_G(y) \cap Y_1' \cap V(G'') \rvert-1 \\
		& = (s+r+2 - \lvert N_G(y) \cap Y_1' \rvert) - r + \lvert N_G(y) \cap Y_1' \rvert - \lvert N_G(y) \cap Y_1' \cap V(G'') \rvert-1 \\
		& = s+1-\lvert N_G(y) \cap Y_1' \cap V(G'') \rvert \\
		& = s+1-\lvert N_{G''}(y) \cap Y_1'' \rvert,
	\end{align*} 
and $L_\L(y) \cap L_\L(u) \subseteq L'(y) \cap L'(u)=\emptyset$ for every $u \in N_G(y) \cap Y_1' \cap V(G'')=N_{G''}(y) \cap Y_1''$, so a desired subset $L_{\L,G''}(y)$ of $L_\L(y)$ exists.

So we may assume $y \in V(G')-( N_{G}^{<s}(Y_1') \cup (V(G)-N_G[Y_1']))$.
Hence $y \in N_{G}^{\geq s}(Y_1')$.
In particular, $y \not \in Y_1'$, so $y \not \in N_G^{\geq s}(Y_1 \cup Z)$ since $L'$ is a $(Z,\ell)$-growth of $L$.
For each $i \in [0,s+2]$, let $L^{(i)}$ and $Y_1^{(i)}$ be the list-assignment $L^{(i)}$ and the subset $Y_1^{(i)}$ of $V(G)$ mentioned in the definition of a $(Z,\ell)$-growth.
Since $y \in N_{G}^{\geq s}(Y_1')-N_G^{\geq s}(Y_1 \cup Z) = N_G^{\geq s}(Y_1^{(s+2)})-N_G^{\geq s}(Y_1^{(0)})$, there exists $j \in [s+2]$ such that $y \in N_G^{\geq s}(Y_1^{(j)})-N_G^{\geq s}(Y_1^{(j-1)})$.
So $\lvert L'(y) \rvert = \lvert L^{(j)}(y) \rvert  = \lvert L^{(j-1)}(y) \rvert \geq s+r+2-\lvert N_G(y) \cap Y_1^{(j-1)} \rvert$, and for every $u \in N_G(y) \cap Y_1^{(j-1)}$, $L_\L(y) \cap L_\L(u) \subseteq L'(y) \cap L'(u) \subseteq L^{(j-1)}(y) \cap L^{(j-1)}(u)=\emptyset$.
Hence by \cref{claim_size_not_Y'}, since $Y_1' \supseteq Y_1^{(j-1)}$,
	\begin{align*}
		\lvert L_\L(y) \rvert 
		& \geq \lvert L'(y) \rvert - r + \lvert N_G(y) \cap Y_1' \rvert - \lvert N_G(y) \cap Y_1' \cap V(G'') \rvert-1 \\
		& \geq (s+r+2-\lvert N_G(y) \cap Y_1^{(j-1)} \rvert) - r + \lvert N_G(y) \cap Y_1' \rvert - \lvert N_G(y) \cap Y_1' \cap V(G'') \rvert-1 \\
		& = s+1-\lvert N_G(y) \cap Y_1' \cap V(G'') \rvert + (\lvert N_G(y) \cap Y_1' \rvert-\lvert N_G(y) \cap Y_1^{(j-1)} \rvert) \\
		& = s+1-\lvert N_{G''}(y) \cap Y_1'' \rvert + (\lvert N_G(y) \cap Y_1' - Y_1^{(j-1)} \rvert),
	\end{align*} 
So there exists a subset $L_{\L,G''}(y)$ of $L_\L(y)$ with size $s+1-\lvert N_{G''}(y) \cap Y_1'' \rvert$ such that $L_{\L,G''}(y) \cap L_\L(u)=\emptyset$ for every $u \in N_G(y) \cap Y_1'-Y_1^{(j-1)}$.
Hence $L_{\L,G''}(y) \cap L_\L(u)=\emptyset$ for every $u \in N_G(y) \cap Y_1' \cap V(G'')=N_{G''}(y) \cap Y_1''$.
\end{proof}

\begin{claim}\label{claim_better_list}
There exists an $(s,1,Y_1'')$-list-assignment $L_{\L,G''}$ of $G''$ such that $L_{\L,G''}(v) \subseteq L_\L(v)$ for every $v \in V(G'')$.
\end{claim}	

\begin{proof}
For every $v \in Y_1''$, let $L_{\L,G''}(v)=L_\L(v)$.
For every $v \in  V(G'') \cap N_{G''}^{<s}(Y_1'')$, let $L_{\L,G''}(v)$ be the set $L_{\L,G''}(v)$ mentioned in \cref{claim_size_weird_s}.
So $L_{\L,G''}$ satisfies (L3) by \cref{claim_size_weird_s}.

For every $v \in V(G'')-N_{G''}[Y_1'']$ with $v \in V(G'')-N_{G}[Y_1']$, we know either $v \in V(G')-Y_1'$, or $s=1$ and $v \in \bigcup_{(A,B) \in \L}U_A^*$; for the former, by \cref{claim_size_not_Y'},
	\begin{align*}
		\lvert L_\L(v) \rvert 
		& \geq \lvert L'(v) \rvert - r + \lvert N_G(v) \cap Y_1' \rvert - \lvert N_G(v) \cap Y_1' \cap V(G'') \rvert-1 \\
		&  = \lvert L'(v) \rvert - r -1 \\
		& \geq (s+r+2)-r-1 \\ 
		& = s+1;
	\end{align*} 
for the latter, $\lvert L_\L(v) \rvert=2=s+1$.
So there exists $L_{\L,G''}(v) \subseteq L_\L(v)$ with $\lvert L_{\L,G''}(v) \rvert = s+1$ in each case.

Let $v \in V(G'')-N_{G''}[Y_1'']$ with $v \not \in V(G'')-N_{G}[Y_1']$.
So $v \in V(G'') \cap N_G[Y_1'] - N_{G''}[Y_1'']$.
Hence $v \in V(G')-Y_1'$, and for every $y \in N_G(v) \cap Y_1'$, $L'(y) \in [s+3,s+r+2]$ and hence $y \in Y_1$ (since $L'$ is a $(Z,\ell)$-growth of $L$).
So $N_G(v) \cap Y_1' = N_G(v) \cap Y_1$.
This implies $L'(v)=L(v)$ since $L'$ is a $(Z,\ell)$-growth of $L$.

Since $v \in N_G[Y_1']-Y_1'$, we know $v \in N_G^{<s}(Y_1') \cup N_G^{\geq s}(Y_1')$.
Since $N_G(v) \cap Y_1' = N_G(v) \cap Y_1$, $v \in N_G^{<s}(Y_1) \cup N_G^{\geq s}(Y_1)$.
Since $v \not \in Y_1'$, $v \not \in N_G^{\geq s}(Y_1)$.
So $v \in N_G^{<s}(Y_1)$.
Hence $\lvert L(v) \rvert = s+r+2-\lvert N_G(v) \cap Y_1 \rvert$.
By \cref{claim_size_not_Y'},
	\begin{align*}
	\lvert L_\L(v) \rvert 
	& \geq \lvert L'(v) \rvert - r + \lvert N_G(v) \cap Y_1' \rvert - \lvert N_G(v) \cap Y_1' \cap V(G'') \rvert-1 \\
	& = \lvert L(v) \rvert - r + \lvert N_G(v) \cap Y_1 \rvert - \lvert N_G(v) \cap Y_1' \cap V(G'') \rvert-1 \\
	& = (s+r+2-\lvert N_G(v) \cap Y_1 \rvert) - r + \lvert N_G(v) \cap Y_1 \rvert - \lvert N_G(v) \cap Y_1' \cap V(G'') \rvert-1 \\
	& = s+1-\lvert N_G(v) \cap Y_1' \cap V(G'') \rvert.
	\end{align*}
Since $v \not \in N_{G''}[Y_1'']$, $\lvert N_G(v) \cap Y_1' \cap V(G'') \rvert=0$.
So $\lvert \L_\L(v) \rvert \geq s+1$.
Therefore, there exists $L_{\L,G''}(v) \subseteq L_\L(v)$ with $\lvert L_{\L,G''}(v) \rvert = s+1$.
So $L_{\L,G''}$ satisfies (L4).

Moreover, for every $x \in V(G'')-Y_1''$ in which $L_{\L,G''}(x)$ has not been defined, since $\lvert L_\L(x) \rvert \geq 2$ for every $x \in V(G'')-Y_1''$, there exists $L_{\L,G''}(x) \subseteq L_\L(x)$ with $\lvert L_{\L,G''}(x) \rvert = 2$.
This implies that $L_{\L,G''}$ is an $(s,1,Y_1'')$-list-assignment of $G''$ and proves the claim.
\end{proof}

Let $L_{\L,G''}$ be an $(s,1,Y_1'')$-list-assignment of $G''$ stated in \cref{claim_better_list}.

Recall that $V(G^*)-V(G'') \subseteq Y_1'$ and $\V^*$ is a $(Y_1' \cup Z^*)$-layering of $G^*$.
So $\V^*$ is a $Y_1''$-layering of $G''$.
Note that $L_\L|_{V(G'')}$ is $(s,\V^*)$-compatible.
So $L_{\L,G''}$ is $(s,\V^*)$-compatible.
Since $L_{\L,G''}$ is an $(s,1,Y_1'')$-list-assignment of $G''$, $(Y_1'', L_{\L,G''})$ is a $\V^*$-standard pair.

\begin{claim}\label{claim_coloring}
There exists an $(L' \cup L_\L)$-coloring $c_\L$ of $G^*$ with clustering $\eta_1(\eta+\theta)$ such that:
	\begin{itemize}
		\item $c_\L|_{V(G'')}$ is an $L_{\L,G''}$-coloring of $G''$,  
		\item for every $x \in [s+3,s+2+r]$, $\{v \in V(G^*): c_\L(v)=x\}=\{v \in Y_1': L'(v)=\{x\}\}$ is a stable set, and
		\item for every $(A,B) \in \L$, there exists at most one element $k_A$ in $[s+r+2]$ such that there exist two monochromatic components with respect to $c_\L$ each having color $k_A$ and intersecting $U_A^{(1)}$.
	\end{itemize}
\end{claim}	

\begin{proof}
Since $G''$ is an induced subgraph of $G^*$, by \cref{claim_G^*_Kst,claim_G^*_ltw}, $G''$ does not contain a $K_{s,t+f_1(h(4\theta)+\theta)+(s+2)^2}$ subgraph, and there exists a tree-decomposition of $G''$ with $\V^*$-width at most $(s+2)^2 \cdot w_0(\theta)$.
By \cref{Z growth}, $\lvert Y_1'' \rvert \leq \lvert Y_1' \rvert+\lvert Z^* \rvert \leq h(\lvert Y_1 \rvert + \lvert Z \rvert)+s+2 \leq h(\eta+\xi_0)+s+2$.
Since $L_{\L,G''}$ is $(s,\V^*)$-compatible and $(Y_1'', L_{\L,G''})$ is a $\V^*$-standard pair, by \cref{bounded layered tw}, there exists an $L_{\L,G''}$-coloring $c_\L$ of $G''$ with clustering $\eta_1(\eta+\theta)$.
By further coloring each vertex $y$ in $Y_1'-V(G'')$ with the unique element in $L'(y)$, we extend the coloring $c_\L$ to be an $(L' \cup L_\L)$-coloring of $G^*$ with clustering $\eta_1(\eta+\theta)$, and $\{v \in V(G^*): c_\L(v)=x\}=\{v \in Y_1': L'(v)=\{x\}\}$ is a stable set for every $x \in [s+3,s+2+r]$.

Let $(A,B) \in \L$.
Let $M_1$ and $M_2$ be two distinct monochromatic components with respect to $c_\L$ having the same color and intersecting $U_A^{(1)}$.
Let $k$ be the color of $M_1$ and $M_2$.
Since $M_1$ intersects $U_A^{(1)} \subseteq V_{i_A+\epsilon_A}$, we know $k \in [s+2]$, and by the definition of $L_\L$, $k \not \equiv i_A+\epsilon_A$ (mod $s+2$).
That is, $k \equiv i_A+j\epsilon_A$ (mod $s+2$) for some $j \in [2,s+2]$.
Since every vertex in $U_A^*$ is adjacent to every vertex in $U_A^{(1)}$, no vertex in $U_A^*$ is colored with $k$ by $c_\L$, for otherwise, $M_1$ and $M_2$ are not distinct.
Hence, if there are two possibilities for $k$, then there exists $S \subseteq [2,s+2]$ with $\lvert S \rvert=2$ such that $u^*_{A,S}$ cannot be colored by $c_\L$ by the definition of $L_\L(u^*_{A,S})$, a contradiction.
\end{proof}

Let $c_\L$ be the $(L' \cup L_\L)$-coloring of $G^*$ mentioned in \cref{claim_coloring}.

\begin{claim}\label{NotControllingMinorClaim2}
For each $(A,B) \in \L$, there exist $Y_A \subseteq V(A)$ with $Z_A \subseteq Y_A$ and with $\lvert Y_A \rvert \leq \eta^*(\theta)$ and an $(s,Y_A, \ell,r)$-list-assignment $L_A$ of $G[V(A)]$ such that:
	\begin{itemize}
		\item $L_A(v)=\{c_\L(v)\}$ for every $v \in Z_A$, 
		\item $L_A(v) \subseteq L'(v)$ for every $v \in V(A)$, and 
		\item for every $L_A$-coloring $c_A$ of $G[V(A)]$, every monochromatic component $M$ of $G[V(A)]$ with respect to $c_A$ intersecting $(Y_1' \cap V(A)) \cup V(A \cap B)=U_A^{(0)}$ either:
			\begin{itemize}
				\item is contained in $G[Z_A]$, or 
				\item contains a vertex in $Z_A-V(B)$, is contained in $G[Y_A]$, and some monochromatic component of $G^*$ with respect to $c_\L$ contains $M[V(M) \cap Z_A]$.
			\end{itemize}
	\end{itemize}
\end{claim}

\begin{proof}
For every separation $(A,B)$, define the following:
	\begin{itemize}
		\item Let $Y_A':=Z_A$.
		\item For every $v \in Y_A'$, let $L_A'(v):=\{c_\L(v)\}$.
		\item For every $v \in V(A) - Y_A'$ with $1 \leq \lvert N_{G[V(A)]}(v) \cap Y_A' \rvert \leq s-1$, let $L_A'(v)$ be a subset of $L'(v)$ with size $s+r+2-\lvert N_{G[V(A)]}(v) \cap Y_A' \rvert$ such that $L'_A(v) \cap L'_A(y)=\emptyset$ for every $y \in N_{G[V(A)]}(v) \cap Y_A'$, and $\lvert \{y \in N_G(v) \cap Y_A': L_A'(y) \subseteq [s+3,s+2+r]\} \rvert= r-\lvert L_A'(v) \cap [s+3,s+2+r] \rvert$.
		(Such $L_A'(v)$ exists since $v \in V(A)-Y_A'$ implies $v \in V(A)-V(B)$, and $L'$ satisfies (R5).)
	\item For every $v \in V(A)-Y_A'$ with $\lvert N_{G[V(A)]}(v) \cap Y_A' \rvert \geq s$ and $1 \leq \lvert N_{G[V(A)]}(v) \cap U_A^{(0)} \rvert \leq s-1$, let $L_A'(v)$ be a subset of $L'(v)$ with size $s+r+2-\lvert N_{G[V(A)]}(v) \cap U_A^{(0)} \rvert$ such that $L_A'(v) \cap L_A'(y)=\emptyset$ for every $y \in N_{G[V(A)]}(v) \cap U_A^{(0)}$, and $\lvert \{y \in N_G(v) \cap Y_A': L_A'(y) \subseteq [s+3,s+2+r]\} \rvert= r-\lvert L_A'(v) \cap [s+3,s+2+r] \rvert$.
		(Such $L_A'(v)$ exists since $v \in V(A)-Y_A'$ implies $v \in V(A)-V(B)$, and $Y_1' \cap V(A) \subseteq U_A^{(0)}$ and $L'$ satisfies (R5).)
		\item For every other vertex $v$ in $V(A)$, let $L_A'(v):=L'(v)$.
	\end{itemize}
Since $V(A \cap B) \subseteq Y_A'$, and $L'$ is an $(s,r+2,Y_1')$-list-assignment of $G$, $L_A'$ is an $(s,r+2,Y_A')$-list-assignment of $G[V(A)]$.
Since $\{v \in V(G^*): c_\L(v)=x\}=\{v \in Y_1': L'(v)=\{x\}\}$ is a stable set for every $x \in [s+3,s+2+r]$, and $L'$ satisfies (R3) and (R4) (with $Y_1$ replaced by $Y_1'$), we know $L_A'$ satisfies (R4) (with $Y_1$ replaced by $Y_A'$), and $\{v \in N_{G[V(A)]}(y)-Y_A': x \in L'_A(v)\}=\emptyset$ for every $y \in Y_A'$ and $x \in [s+3,s+2+r]$ with $x \in L_A'(y)$.
Since $Y_1' \cap V(A) \subseteq U_A^{(0)}$ and $N_{G[V(A)]}^{\geq s}(U_A^{(0)}) \subseteq U_A^{(1)} \subseteq Y_A'$ and $L'$ satisfies (R5) (with $Y_1$ replaced by $Y_1'$), $L_A'$ satisfies (R5) (with $Y_1$ replaced by $Y_A'$) by the definition of $L_A'$.
In addition, $\lvert Y_A' \rvert = \lvert Z_A \rvert \leq f_{1}(h(4\theta)+\theta)$.

For each $(A,B) \in \L$, if the element $k_A$ in $[s+r+2]$ mentioned in \cref{claim_coloring} exists, then let $k_A$ be this element; otherwise, let $k_A=0$.
Note that $k_A \not \in [s+3,s+r+2]$ since if $k_A$ exists in \cref{claim_coloring}, then the corresponding monochromatic component intersects $U_A^{(1)}$.

For each $(A,B) \in \L$, we further define the following:
	\begin{itemize}
		\item Let $W_A:=\{v \in V(A)-Y_A': \lvert N_{G[V(A)]}(v) \cap Y_A' \rvert \geq s\}$.
		\item Let $L'_{A,W}$ be a $(W_A,\{k_A\} \cup [s+3,s+2+r])$-progress of $L_A'$ (in $G[A]$).
		\item Let $W_A'=\{v \in V(A)-(Y_A' \cup W_A): \lvert N_{G[V(A)]}(v) \cap (Y_A' \cup W_A) \rvert \geq s\}$.
		\item Let $L''_A$ be a $(W_A',\{\ell\} \cup [s+3,s+2+r])$-progress of $L_{A,W}'$ (in $G[A]$).
		\item Let $Y''_A:=Y_A' \cup W_A \cup W_A'$. 
	\end{itemize}
Since $L_A'$ is an $(s,r+2,Y_A')$-list-assignment of $G[V(A)]$, we have $L''_A$ satisfies (R1)--(R3) (with $Y_1$ replaced by $Y''_A$) by \cref{progress basic}.
By the definition of $L''_A$, for every $x \in [s+3,s+2+r]$, we have $\{y \in Y''_A: x \in L''_A(y)\} = \{y \in Y_A': x \in L_A'(y)\}$, so $L''_A$ satisfies (R4) and (R5) (with $Y_1$ replaced by $Y''_A$) by \cref{progress basic}.
Hence $L''_A$ is an $(s,Y''_A,\ell,r)$-list-assignment of $G[V(A)]$.
Note that by \cref{BoundedGrowth}, $$\lvert Y''_A \rvert \leq \lvert Y_A' \rvert + \lvert W_A \rvert+\lvert W_A' \rvert \leq \lvert Y_A' \rvert + f(\lvert Y_A' \rvert) + f(\lvert Y_A' \rvert + f(\lvert Y_A' \rvert)) \leq f_3(h(4\theta)+\theta).$$

For each $(A,B) \in \L$, define $L_A$ to be a $(Y_A'',\ell)$-growth of $L_A''$ (in $G[V(A)]$), and let $Y_A = \{v \in V(A): \lvert L_A(v) \rvert=1\}$.
By \cref{Z growth}, $L_A$ is an $(s,Y_A,\ell,r)$-list-assignment of $G[V(A)]$.
Furthermore, it is clear that $L_A(v) \subseteq L_A''(v) \subseteq L_A'(v) \subseteq L'(v)$ for every $v \in V(A)$, so $L_A(v)=\{c_\L(v)\}$ for every $v \in Y_A'$, and every $L_A$-coloring is an $L_A'$-coloring and an $L_A''$-coloring.
By \cref{Z growth}, 
$$\lvert Y_A \rvert \leq h(\lvert Y_A'' \rvert) \leq h(f_3(h(4\theta)+\theta)) = \eta^*(\theta).$$
In addition, for every $(A,B) \in \L$, we have $Y_1 \cap V(A) \subseteq Y_A' \subseteq Y_A'' \subseteq Y_A \subseteq V(A)$.

Now we prove the last statement of this claim.
Assume that $(A,B)$ is a fixed element of $\L$ and $M$ is a fixed monochromatic component with respect to a fixed $L_A$-coloring $c_A$ of $G[V(A)]$ intersecting $(Y_1' \cap V(A)) \cup V(A \cap B)=U_A^{(0)}$.
We may assume that $M$ is not contained in $G[Z_A]$ for otherwise we are done.

Since $L_A(v) \subseteq L'(v)$ for every $v \in V(A)$, $c_A$ is an $L'|_{G[V(A)]}$-coloring of $G[V(A)]$.
We can extend $c_A$ to be an $L'$-coloring $c'$ of $G$ by coloring each vertex $v$ in $V(G)-V(A)$ with an arbitrary color in $L'(v)$.
So $M$ is a connected subgraph of $M'[V(M') \cap G[A]]$ for some monochromatic component $M'$ with respect to $c'$ intersecting $(Y_1' \cap V(A)) \cup V(A \cap B)$.
Since $G[Y_1' \cap V(A)] \subseteq G[Z_A]$, if $V(M) \cap (Y_1 \cup Z) \neq \emptyset$, then $M'$ intersects $Y_1 \cup Z$, so $M'$ is contained in $G[Y_1']$ by \cref{Z growth} (since $L'$ is a $(Z,\ell)$-growth of $L$), and hence $V(M) \subseteq Y_1' \cap V(A) \subseteq Z_A$, a contradiction.
So $V(M) \cap (Y_1 \cup Z)=\emptyset$.
Hence $V(M) \cap ((Y_1' \cap V(A)-(Y_1 \cup Z)) \cup (V(A \cap B)-Y_1')) \neq \emptyset$.

Note that $(Y_1' \cap V(A)-(Y_1 \cup Z)) \cup (V(A \cap B)-Y_1') \subseteq U_A^{(0)} \subseteq Z_A$.
So $M$ intersects $U_A^{(0)} \subseteq Z_A$ and $V(M) \not \subseteq Z_A$.
Hence there exist $v \in V(M)-Z_A$ and $y \in V(M) \cap Z_A$ such that $y \in N_{G[V(A)]}(v) \cap Z_A$ and $L''_A(v) \cap L''_A(y) \neq \emptyset$.
Since $Z_A \subseteq Y''_A$ and $L''_A$ is an $(s,Y''_A,\ell,r)$-list-assignment, (R3) and (R4) imply that the vertices of $M$ are colored with some color in $[s+2]$.
So $V(M) \cap Z_A \subseteq V(G'')$.

Since $M$ intersects $U_A^{(0)}$, the definition of $L'_A(v)$ implies that $M$ intersects $U_A^{(1)}$.
In particular, $M$ contains a vertex in $Z_A-V(B)$.
Moreover, since $L_A$ is a $(Y_A'',\ell)$-growth of $L_A''$ and $M$ intersects $U_A^{(0)} \subseteq Y_A''$, $M$ is contained in $G[Y_A''] \subseteq G[Y_A]$ by \cref{Z growth}.

Hence, to prove this claim, it suffices to show that some monochromatic component of $G^*$ respect to $c_\L$ contains $M[V(M) \cap Z_A]$. 

Since $Y_A'=Z_A$ and $L'_A(v) \cap L'_A(y) \supseteq L''_A(v) \cap L''_A(y) \neq \emptyset$, the definition of $L'_A$ implies that $v \in W_A$.
By the definition of $L'_{A,W}$, $M$ does not have color $k_A$.
Note that every component of $M[V(M) \cap Z_A]$ is contained in some monochromatic component of $G^*$ with respect to $c_\L$ and either intersects $U_A^{(0)}$ or intersects $U_A^{(1)}$.
By the definition of $L'_A$, every component of $M[V(M) \cap Z_A]$ intersects $U_A^{(1)}$.
Since $M$ does not have color $k_A$, by \cref{claim_coloring}, some monochromatic component of $G^*$ with respect to $c_\L$ contains $M[V(M) \cap Z_A]$.
This proves the claim.
\end{proof}

For every $(A,B) \in \L$, let $L_A$ and $Y_A$ be the list-assignment and set mentioned in \cref{NotControllingMinorClaim2}, respectively, so $\lvert Y_A \rvert \leq \eta^*(\theta)$.
For every $(A,B) \in \L$, since $Y_1 \cap V(A) \subseteq Y_A \subseteq V(A)$, there exists an $(\eta,g)$-bounded $L_A$-coloring $c_A$ of $G[V(A)]$ such that for every $x \in [s+3,s+2+r]$, the set of vertices colored $x$ is a stable set by our assumption.
Since $\Se$ is a segregation of $G-Z$ and $\L$ is a location, for every $v \in V(G)-V(G')$, there uniquely exists $(A_v,B_v) \in \L$ such that $v \in V(A_v)-V(B_v)$.
Let $c^*$ be the function, where $c^*(v):=c_\L(v)$ for every $v \in V(G')$, and
$c^*(v):=c_{A_v}(v)$ for every $v \in V(G)-V(G')$. 
Clearly, $c^*$ is an $L'$-coloring (and hence an $L$-coloring) of $G$.
Note that for every $v \in V(A) \cap V(G')$ for some $(A,B) \in \L$, $c^*(v)=c_\L(v)=c_{A_v}(v)$ by the property of $L_A$.

Suppose that there exists $x^* \in [s+3,s+2+r]$ such that the set of vertices colored $x^*$ is a not stable set.
Then there exists an edge $e$ of $G$ whose both ends are colored $x^*$.
Since for every $(A,B) \in \L$, the set of vertices colored $x^*$ under $c_A$ is a stable set, $e$ belongs to $\bigcap_{(A,B) \in \L}B \subseteq G'$.
But the set of vertices colored  $x^*$ under $c_\L$ is a stable set by \cref{claim_coloring}, a contradiction.

So $c^*$ is not $(\eta,g)$-bounded by our assumption.
By \cref{Z growth}, the union $U^*$ of the monochromatic components with respect to $c^*$ intersecting $Y_1 \cup Z$ is contained in $G[Y_1']$.
Note that $\lvert Y_1 \rvert >3\theta \geq 1$.
So $U^*$ contains at most $\lvert Y_1' \rvert \leq h(\lvert Y_1 \rvert+\xi_0) \leq g^*(\lvert Y_1 \rvert) \leq g(\lvert Y_1 \rvert) \leq \lvert Y_1 \rvert^2g(\lvert Y_1 \rvert)$ by \cref{Z growth}.
Note that $\lvert Y_1 \rvert \leq \eta$.
Hence, there exists a monochromatic component $M$ with respect to $c^*$ disjoint from $Y_1 \cup Z$ containing more than $\eta^2g(\eta)$ vertices.

\begin{claim} \label{claim_frame_link}
There exists a monochromatic component of $G^*$ with respect to $c_\L$ containing $M[V(M) \cap V(G')]$.
\end{claim}

\begin{proof}
Let $P$ be a path in $M$ between two vertices $x,y$ in $V(M) \cap V(G')$ internally disjoint from $V(M) \cap V(G')$ having at least one internal vertex.
To prove this claim, it suffices to show that there exists a monochromatic component of $G^*$ with respect to $c_\L$ containing both $x$ and $y$.

Since $G' \supseteq G[\bigcap_{(A,B) \in \L} V(B)]$, there exists $(A,B) \in \L$ such that $P \subseteq A$.
Let $M_x$ and $M_y$ be the components of $M[V(M) \cap V(A) \cap V(G')]$ contaning $x$ and $y$, respectively.
By the existence of $P$, there exists a component $M_P$ of $M[V(M) \cap V(A)]$ containing $M_x$ and $M_y$, and $M_P$ is not contained in $G[Z_A]$.
Hence $V(M) \not \subseteq V(A)$, for otherwise $M=M_P$ is a monochromatic component with respect to $c_A$ and hence contains at most $\eta^2g(\eta)$ vertices, a contradiction.
So $M_P$ is a monochromatic component with respect to $c_A$ intersecting $(Y_1' \cap V(A)) \cup V(A \cap B)$ but not contained in $G[Z_A]$.
By \cref{NotControllingMinorClaim2}, some monochromatic component of $G^*$ with respect to $c_\L$ contains $M_P[V(M_P) \cap Z_A]$ and hence contains both $x$ and $y$.
\end{proof}

For every $(A,B) \in \L$, let $S_A$ be the set of the components of $M[V(M) \cap V(A)]$.
Let $\L^* = \{(A,B) \in \L: V(M)-V(B) \neq \emptyset\}$.

\begin{claim} \label{claim_special_sep}
For every $(A,B) \in \L^*$, $V(M) \cap Z_A -V(B) \neq \emptyset$ and $\sum_{C \in S_A}\lvert V(C) \rvert \leq \eta^*(\theta)$. 
\end{claim}

\begin{proof}
Let $(A,B) \in \L^*$.
Since $c_A$ is $(\eta,g)$-bounded, $V(M) \not \subseteq V(A)$, so each member of $S_A$ is a monochromatic component of $G[A]$ with respect to $c_A$ intersecting $(Y_1' \cap V(A)) \cup V(A \cap B)$.
So by \cref{NotControllingMinorClaim2}, for every member $C$ of $S_A$, either $C$ is contained in $G[Z_A]$, or $C$ contains a vertex in $Z_A-V(B)$ and is contained in $G[Y_A]$.
Hence if some member of $S_A$ satisfies the latter, then $V(M) \cap Z_A-V(B) \neq \emptyset$; if every member of $S_A$ satisfies the former, then $V(M) \cap V(A) \subseteq Z_A$, so $V(M) \cap Z_A-V(B) = V(M)-V(B) \neq \emptyset$ since $(A,B) \in \L^*$.
Therefore, $V(M) \cap Z_A-V(B) \neq \emptyset$.
Moreover, since $Z_A \subseteq Y_A$, $\sum_{C \in S_A}\lvert V(C) \rvert \leq \lvert Y_A \rvert \leq \eta^*(\theta)$.
\end{proof}

By \cref{claim_coloring,claim_frame_link}, $\lvert V(M) \cap \bigcap_{(A,B) \in \L}V(B) \rvert \leq \lvert V(M \cap G') \rvert \leq \eta_1(\eta+\theta)$.
By \cref{claim_special_sep}, $\lvert \L^* \rvert \leq \lvert V(M \cap G') \rvert \leq \eta_1(\eta+\theta)$, and 
	\begin{align*}
		\lvert V(M) \rvert & = \lvert V(M) \cap \bigcap_{(A,B) \in \L}V(B) \rvert + \sum_{(A,B) \in \L^*, C \in S_A}\lvert V(C)-V(B) \rvert \\
		& \leq \eta_1(\eta+\theta) + \lvert \L^* \rvert \cdot \eta^*(\theta) \\ 
		& \leq \eta_1(\eta+\theta) + \eta_1(\eta+\theta) \cdot \eta^*(\theta) \\ 
		& = \eta_1(\eta+\theta) \cdot (1+ \eta^*(\theta)) \\ 
		& \leq \eta_1(2\eta) \cdot (1+ \eta^*(\eta)) \\ 
		& \leq g^*(\eta) \leq \eta^2g(\eta),
	\end{align*}
a contradiction.
This completes the proof.
\end{proof}

We now prove our main theorems. Before proving \cref{OddMinorFree}, we note that it implies our main results from \cref{Intro}.  In particular, the first part of \cref{OddMinorFree} with $Y_1=\emptyset$ implies \cref{minorbasic} since the list-assignment $(L(v): v \in V(G))$ with $L(v)=[s+2]$ for every $v \in V(G)$ is an $(s,\emptyset,0,0)$-list-assignment. 
The second part of \cref{OddMinorFree} with $Y_1=\emptyset$ and $\ell=1$ implies \cref{oddminorbasic} since the list-assignment $(L(v): v \in V(G))$ with $L(v)=[2s+1]$ for every $v \in V(G)$ is an $(s,\emptyset,1,s-1)$-list-assignment. 

\begin{theorem} 
\label{OddMinorFree}
For all $s,t\in\mathbb{N}$ and for every graph $H$, there exists $\eta\in\mathbb{N}$ and a nondecreasing function $g$ such that the following hold:
	\begin{enumerate}
		\item If $G$ is a graph with no $K_{s,t}$ subgraph and no $H$-minor, $Y_1 \subseteq V(G)$ with $\lvert Y_1 \rvert \leq \eta$, and $L$ is an $(s,Y_1,0,0)$-list-assignment of $G$, then there exists an $(\eta,g)$-bounded $L$-coloring.
		\item If $G$ is a graph with no $K_{s,t}$ subgraph and no odd $H$-minor, $Y_1 \subseteq V(G)$ with $\lvert Y_1 \rvert \leq \eta$, $\ell \in [s+2]$ and $L$ is an $(s,Y_1,\ell,s-1)$-list-assignment of $G$, then there exists an $(\eta,g)$-bounded $L$-coloring such that for every $x \in [s+3,2s+1]$, the set of vertices colored $x$ is a stable set in $G$.
	\end{enumerate}
\end{theorem}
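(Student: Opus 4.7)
The plan is to prove both parts simultaneously by a single induction on $\lvert V(G) \rvert$, and subject to that, by reverse induction on $\lvert Y_1 \rvert$. Take a counterexample $(G, Y_1, L)$ minimizing $\lvert V(G) \rvert$ and then maximizing $\lvert Y_1 \rvert$. Fix $\eta$ and $g$ to dominate the constants and functions from \cref{enlarge Y}, \cref{NotControllingMinor}, \cref{BoundedGrowth}, and \cref{Z growth}, and pick a threshold $k$ slightly larger than $9\theta^*$, where $\theta^*$ is the tangle-order threshold supplied by \cref{NotControllingMinor} when applied with $t'$ chosen below.

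If $\lvert Y_1 \rvert \leq k$, apply \cref{enlarge Y} with $F := \{\ell\} \cup [s+3, s+2+r]$. Outcome (1) is the desired coloring; outcome (2) contradicts $\lvert Y_1 \rvert \leq k$; outcome (3) strictly enlarges $Y_1$ to $Y_1'$ with a list-assignment $L'$ refining $L$, and the preservation properties (3c)--(3e) of the lemma imply that (R3)--(R5) still hold for $L'$, so $(G, Y_1', L')$ is again a counterexample, contradicting maximality of $\lvert Y_1 \rvert$; outcome (4) gives $N_G(Y_1) = \emptyset$, and minimality of $\lvert V(G) \rvert$ applied to $G - Y_1$ with $L|_{G - Y_1}$ yields a coloring that extends via the forced colors on $Y_1$.

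If $\lvert Y_1 \rvert > k$, apply \cref{coloring or tangle}: outcome (1) is the conclusion, outcome (2) contradicts minimality, so we are left with outcome (3), a tangle $\T$ of order $\theta \geq \theta^*$. For part (1), take $t' := \lvert V(H) \rvert$; then $\T$ cannot control a $K_{t'}$-minor (else $G$ contains an $H$-minor), so \cref{NotControllingMinor} with $r = 0$ applies and its two outcomes give the desired coloring or a counterexample on a proper induced subgraph, contradicting minimality of $\lvert V(G) \rvert$.

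The main obstacle lies in part (2). Set $t := \lceil c \lvert V(H) \rvert \sqrt{\log 12 \lvert V(H) \rvert}\,\rceil$ as in \cref{structure odd minor}. If $\T$ does not control a $K_t$-minor, then \cref{NotControllingMinor} with $r = s - 1$ finishes exactly as in part (1). Otherwise $\T$ witnesses a $K_t$-minor in $G$, and \cref{structure odd minor} produces either an odd $K_{\lvert V(H) \rvert}$-minor (hence an odd $H$-minor, contradicting the hypothesis) or a set $X$ with $\lvert X \rvert < 8 \lvert V(H) \rvert$ and a bipartite block $U$ of $G - X$ containing the branch sets disjoint from $X$. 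My plan for this final case is to absorb $X$ into the precolored set via a $(Z, \ell)$-growth with $Z = X$ (invoking \cref{Z growth} to keep the enlarged precolored set bounded) and then to force colors from the stable range $[s+3, 2s+1]$ onto the bipartition $U = U_1 \cup U_2$: for $s \geq 3$ at least two such colors are available, so bipartiteness lets us 2-color $U$ entirely within the stable range, preserving (R4); for $s = 2$ only one stable color is available, so a hybrid scheme is needed that colors one side from $[s+3, 2s+1]$ and the other from $[1, s+2]$, using the block-cut-tree structure of $G - X$ to keep monochromatic components in the non-stable color bounded; the $s = 1$ case is the result of \citet{LO17}. The bookkeeping is to verify that after these precoloring steps the restricted tangle $\T - X$ on the uncolored subgraph no longer controls a $K_t$-minor, so that a second invocation of \cref{NotControllingMinor} completes the argument.
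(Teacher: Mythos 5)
Your treatment of the small-$\lvert Y_1 \rvert$ case (via \cref{enlarge Y}), the large-$\lvert Y_1 \rvert$ case (via \cref{coloring or tangle}), and the non-controlling-tangle case (via \cref{NotControllingMinor}) matches the paper, as does the reduction of Part~1 to the observation that the tangle cannot control a $K_{\lvert V(H)\rvert}$-minor. The genuine gap is in the bipartite-block case at the end of Part~2.

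First, your plan to 2-color the bipartite block $U$ entirely within the stable range $[s+3,2s+1]$ has a list-compatibility problem. Even after a $(Z,\ell)$-growth, a vertex $v \in V(G)-Y_1'$ is only guaranteed (by the analogue of \cref{OddMinorFreeClaim6}) to have \emph{one} color from $\{\ell\}\cup[s+3,2s+1]$ and one color outside it. Indeed (R5) forces $\lvert L'(v) \cap [s+3,2s+1]\rvert = (s-1) - \lvert\{y\in N_G(v)\cap Y_1' : L'(y)\subseteq[s+3,2s+1]\}\rvert$, which can be~$0$; and even when the intersection is nonempty, different vertices need not share the same pair of stable colors, so a fixed two-color bipartite coloring is not available. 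The paper sidesteps this by coloring one side of the bipartition $\{P,Q\}$ from $\{\ell\}\cup[s+3,2s+1]$ and the other side from $[s+2]-\{\ell\}$, relying on (R3) and the structure of $Y_1'$ to bound the $\ell$- and ordinary-colored monochromatic components (\cref{OddMinorFreeClaim7}). Second, the plan to ``verify that $\T - X$ no longer controls a $K_t$-minor and re-invoke \cref{NotControllingMinor}'' does not work: a bipartite block can perfectly well contain a controlled $K_{t'}$-minor (e.g.\ $K_{t',t'}$ does), so there is no reason the restricted tangle loses control after deleting $X$. The paper instead builds a location $\L$ from the block--cut-tree of $G-Z$ (with $\L\subseteq\T$, \cref{OddMinorFreeClaim5}), explicitly colors the bipartite piece $G'$, transfers the resulting boundary colors onto each $(A,B)\in\L$ via further list manipulations ($L''$, $L'''$, $L^*$), and then applies the main induction hypothesis to each strictly smaller $G[V(A)]$ before gluing. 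Your proposal needs this decomposition-and-recursion step; a second appeal to \cref{NotControllingMinor} cannot replace it.
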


\begin{proof}
Define the following:
	\begin{itemize}
		\item Let $f$ be the function $f_{s,t}$  in \cref{BoundedGrowth}.
		\item Let $f_0:\mathbb{N}_0\to \mathbb{N}_0$ be the identity function, and for every $i\in\mathbb{N}$, let $f_i:\mathbb{N}_0\to \mathbb{N}_0$ be the function defined by $f_i(x):=f_{i-1}(x)+f(f_{i-1}(x))$.
		\item Let $C_0$ be the integer $c$ in \cref{structure odd minor}.
		\item Let $t':=\lceil (C_0+9) \lvert V(H) \rvert\sqrt{\log {12\lvert V(H) \rvert}} \rceil$.
		\item Let $\theta_0$ be the number $\theta^*$ and let $g_0,\eta_0$ be the functions $g^*,\eta^*$, respectively, in \cref{NotControllingMinor}  taking $s=s$, $t=t$ and $t'=t'$.
		\item Let $\xi:=8\lvert V(H) \rvert$.
		\item Let $h:\mathbb{N}_0\to \mathbb{N}_0$ be the function mentioned in \cref{Z growth} by taking $s=s$ and $t=t$.
		\item Let $\theta' :=\theta_0+t'+1$.
		\item Let $\eta_1,g_1$ be the number $\eta$ and the function $g$, respectively, mentioned in \cref{enlarge Y} by taking $s=s$, $t=t$ and $k=9\theta'$.
		\item Let $\eta_3:=h(\theta'+h(4\theta')+1 + f(h(4\theta')+1))$.
		\item Define $\eta:= \eta_0(\theta') + \eta_1 + f(\eta_1) + h(4\theta')+1 + f(h(4\theta')+1) + \eta_3$.
		\item Let $\eta_2:=h(\eta+\xi) + f(h(\eta+\xi))$. 
		\item Define $g:\mathbb{N}\to\mathbb{N}$ be the function defined by $g(x):=x+g_0(x)+g_1(x)+\eta_1 + \eta_2 \cdot \eta_3$ for every $x\in\mathbb{N}$. 
	\end{itemize}

Suppose to the contrary that there exists a graph $G$ with no $K_{s,t}$ subgraph, a subset $Y_1$ of $V(G)$ with $\lvert Y_1 \rvert \leq \eta$, a number $\ell \in [s+2]$, and list-assignment $L$ of $G$ such that:
	\begin{itemize}
		\item If $G$ has no $H$-minor, then $L$ is an $(s,Y_1,0,0)$-list-assignment such that there exists no $(\eta,g)$-bounded $L$-coloring of $G$.
		\item If $G$ has no odd $H$-minor, then $L$ is an $(s,Y_1,\ell,s-1)$-list-assignment of $G$ such that there exists no $(\eta,g)$-bounded $L$-coloring $c$ of $G$ such that  $\{v \in V(G): c(v)=x\}$ is a stable set in $G$ for every $x \in [s+3,2s+1]$.
	\end{itemize} 
We further choose $G$ and $Y_1$ so that $\lvert V(G) \rvert$ is as small as possible, and subject to this, $\lvert Y_1 \rvert$ is as large as possible.
Let $r:=0$ and $\ell':=0$ when $G$ has no $H$-minor; let $r:=s-1$ and $\ell':=\ell$ when $G$ has no odd $H$-minor.

\begin{claim}\label{OddMinorFreeClaim1}
$Y_1 \neq \emptyset$ and $N_G(Y_1) \neq \emptyset$.
\end{claim}

\begin{proof}
If $Y_1=\emptyset$, then let $v$ be a vertex of $G$ and define $L'$ to be a $(\{v\}, \{\ell\} \cup [s+3,2s+1])$-progress of $L$. 
Let $Y_1'=\{v\}$.
By \cref{progress basic}, $L'$ is an $(s,Y_1',0,0)$-list-assignment when $G$ has no $H$-minor; $L'$ is an $(s,Y_1',\ell,s-1)$-list-assignment when $G$ has no odd $H$-minor.
In addition, $\lvert Y_1' \rvert \leq \eta$.
So the maximality of $Y_1$ implies that there exists an $(\eta,g)$-bounded $L'$-coloring $c'$ of $G$ such that if $G$ has no odd $H$-minor, then $\{v \in V(G): c'(v)=x\}$ is a stable set in $G$ for every $x \in [s+3,2s+1]$.
But $c'$ is an $(\eta,g)$-bounded $L$-coloring $c$ of $G$ such that if $G$ has no odd $H$-minor, then $\{v \in V(G): c(v)=x\}$ is a stable set in $G$ for every $x \in [s+3,2s+1]$, a contradiction.

So $Y_1 \neq \emptyset$.
Suppose that $N_G(Y_1)=\emptyset$.
Let $G'=G-Y_1$.
Then $L|_{G'}$ is an $(s,\emptyset,0,0)$-list-assignment of $G'$ when $G$ has no $H$-minor; $L|_{G'}$ is an $(s,\emptyset,\ell,s-1)$-list-assignment of $G'$ when $G$ has no odd $H$-minor.
By the minimality of $G$, there exists an $(\eta,g)$-bounded $L|_{G'}$-coloring $c$ of $G'$ such that if $G$ has no odd $H$-minor, then $\{v \in V(G'): c(v)=x\}$ is a stable set in $G'$ for every $x \in [s+3,2s+1]$.
By further coloring each vertex $y$ in $Y_1$ with the unique element in $L(y)$, since $\lvert Y_1 \rvert \leq \lvert Y_1 \rvert^2g(\lvert Y_1 \rvert)$, there exists an $(\eta,g)$-bounded $L$-coloring of $G$ such that if $G$ has no odd $H$-minor, then $\{v \in V(G'): c(v)=x\}$ is a stable set in $G'$ for every $x \in [s+3,2s+1]$, a  contradiction.
This proves the claim.
\end{proof}

\begin{claim}\label{OddMinorFreeClaim2}
 $\lvert Y_1 \rvert \geq 9\theta'+1$.
 \end{claim}

\begin{proof}
Suppose $\lvert Y_1 \rvert \leq 9\theta'$.
So $\lvert Y_1 \rvert <\eta_1$.

Since $\eta \geq \eta_1$ and $g \geq g_1$, if there exists an $(\eta_1,g_1)$-bounded $L$-coloring of $G$ such that for every $x \in [s+3,2s+1]$, the set of vertices colored $x$ is a stable set in $G$, then it is an $(\eta,g)$-bounded $L$-coloring of $G$ such that for every $x \in [s+3,2s+1]$, the set of vertices colored $x$ is a stable set in $G$, a contradiction.
So there exists no $(\eta_1,g_1)$-bounded $L$-coloring of $G$ such that for every $x \in [s+3,2s+1]$, the set of vertices colored $x$ is a stable set in $G$.

Hence, by \cref{enlarge Y} (with taking $F=[s+3,2s+1]$) and \cref{OddMinorFreeClaim1}, there exist $Y_1' \subseteq V(G)$ with $\lvert Y_1 \rvert < \lvert Y_1' \rvert \leq \eta_1$ and an $(s,r+2,Y_1')$-list-assignment $L'$ of $G$ with $L'(v) \subseteq L(v)$ for every $v \in V(G)$ such that:
	\begin{itemize}
		\item there exists no $(\eta_1,g_1)$-bounded $L'$-coloring of $G$ such that for every $x \in [s+3,2s+1]$, the set of vertices colored $x$ is a stable set in $G$, 
		\item for every $L'$-coloring of $G$, every monochromatic component intersecting $Y_1$ is contained in $G[Y_1']$,
		\item $\{y \in Y_1: L(y) \cap [s+3,2s+1] \neq \emptyset\} = \{y \in Y_1': L'(y) \cap [s+3,2s+1] \neq \emptyset\}$,
		\item for every $x \in \{\ell\} \cup [s+3,2s+1]$ and $y \in Y_1'$ with $x \in L'(y)$, we have $\{v \in N_G(y)-Y_1': x \in L'(v)\}=\emptyset$, and 
		\item for every $v \in V(G)-Y_1'$, we have $L'(v) \cap [s+3,2s+1] = L(v) \cap [s+3,2s+1]$.
	\end{itemize}
So $L'$ is an $(s,Y_1',0,0)$-list-assignment of $G$ when $G$ has no $H$-minor; $L'$ is an $(s,Y_1',\ell,s-1)$-list-assignment of $G$ when $G$ has no odd $H$-minor.

Since $\eta \geq \lvert Y_1' \rvert > \lvert Y_1 \rvert$, the maximality of $Y_1$ implies that there exists an $(\eta,g)$-bounded $L'$-coloring $c'$ of $G$ such that for every $x \in [s+3,2s+1]$, the set of vertices colored $x$ is a stable set in $G$.
So every monochromatic component with respect to $c'$ contains at most $\eta^2g(\eta)$ vertices.
Since $c'$ is an $L'$-coloring, every monochromatic component with respect to $c'$ intersecting $Y_1$ is contained in $G[Y_1']$ and hence contains at most $\lvert Y_1' \rvert \leq \eta_1 \leq \lvert Y_1 \rvert^2g(\lvert Y_1 \rvert)$ vertices.
Since $L'(v) \subseteq L(v)$ for every $v \in V(G)$, $c'$ is an $L$-coloring.
Therefore, $c'$ is an $(\eta,g)$-bounded $L$-coloring of $G$ such that for every $x \in [s+3,2s+1]$, the set of vertices colored $x$ is a stable set in $G$, a contradiction.
\end{proof}

Define $\T=\{(A,B): \lvert V(A \cap B) \rvert <\theta', \lvert V(A) \cap Y_1 \rvert \leq 3\theta'\}$ to be a set of separations of $G$.

\begin{claim} \label{OddMinorFreeClaim3}
$\T$ is a tangle in $G$ of order $\theta'$.
\end{claim}

\begin{proof}
Suppose that $\T$ is not a tangle in $G$ of order $\theta'$.
Since $G$ has no $K_{s,t}$ subgraph and $L$ is an $(s,r+2,Y_1)$-list-assignment of $G$ with $\eta \geq \lvert Y_1 \rvert \geq 9\theta'+1$ by \cref{OddMinorFreeClaim2}.
By \cref{coloring or tangle} by taking $s=s,t=t,\theta=\theta',\eta=\eta,g=g,r=r+2$ and $F=\{\ell'\} \cup [s+3,2s+1]$, there exists an induced subgraph $G'$ of $G$ with $\lvert V(G') \rvert < \lvert V(G) \rvert$, a subset $Y_1' \subseteq V(G')$ with $\lvert Y_1' \rvert \leq \eta$, and an $(s,r+2,Y_1')$-list-assignment $L'$ of $G'$ with $L'(v) \subseteq L(v)$ for every $v \in V(G)$ such that:
	\begin{itemize}
		\item there exists no $(\eta,g)$-bounded $L'$-coloring of $G'$ such that for every $x \in [s+3,2s+1]$, the set of vertices colored $x$ is a stable set,
		\item $\{v \in Y_1': (\{\ell'\} \cup [s+3,2s+1]) \cap L'(v) \neq \emptyset\} = \{v \in Y_1: (\{\ell'\} \cup [s+3,2s+1]) \cap L(v) \neq \emptyset\} \cap V(G')$, and 
		\item for every $v \in V(G')-Y_1'$, we have $L'(v) \cap (\{\ell'\} \cup [s+3,2s+1]) = L(v) \cap (\{\ell'\} \cup [s+3,2s+1])$.
	\end{itemize}
Hence $L'$ is an $(s,Y_1,\ell',r)$-list-assignment of $G'$.
This contradicts the minimality of $G$.
\end{proof}

\begin{claim}
\label{OddMinorFreeClaim4}
 $\T$ controls a $K_{t'}$-minor.
 \end{claim}

\begin{proof}
Suppose to the contrary that $\T$ does not control a $K_{t'}$ minor.
Note that $\theta' \geq \theta_0$, $\eta \geq \eta_0(\theta')$ and $g \geq g_0$.
By \cref{NotControllingMinor}, since there does not exist an $(\eta,g)$-bounded $L$-coloring of $G$ such that for every $x \in [s+3,2s+1]$, the set of vertices color with $x$ is a stable set in $G$, we know there exist $(A^*,B^*) \in \T$, a set $Y_{A^*}$ with $\lvert Y_{A^*} \rvert \leq \eta_0(\theta') \leq \eta$ and $Y_1 \cap V(A^*) \subseteq Y_{A^*} \subseteq V(A^*)$, and an $(s,Y_{A^*},\ell',r)$-list-assignment $L_{A^*}$ of $G[V(A^*)]$ such that there exists no $(\eta,g)$-bounded $L_{A^*}$-coloring of $G[V(A^*)]$ such that for every $x \in [s+3,2s+1]$, the set of vertices colored $x$ is a stable set in $G[V(A^*)]$.
But $\lvert V(A^*) \rvert < \lvert V(G) \rvert$ since $\lvert V(A^*) \cap Y_1 \rvert \leq 3\theta' < \lvert Y_1 \rvert$.
So it contradicts the minimality of $G$.
\end{proof}

By \cref{OddMinorFreeClaim4}, $G$ contains a $K_{t'}$-minor $\alpha$.
In particular, $G$ has no odd $H$-minor and $L$ is an $(s,Y_1,\ell,s-1)$-list-assignment of $G$.

By \cref{structure odd minor}, either $G$ contains an odd $K_{\lvert V(H) \rvert}$-minor, or there exists a set $Z \subseteq V(G)$ with $\lvert Z \rvert < 8\lvert V(H) \rvert \leq \xi$ such that the unique block $U$ of $G-Z$ intersecting all branch vertices of $\alpha$ disjoint from $Z$ is bipartite.
The former implies that $G$ contains an odd $H$-minor, a contradiction.
So the latter holds.

Let $(A_0,B_0)$ be the separation of $G$ with $V(A_0 \cap B_0)=Z$ and $U \subseteq B_0$, and subject to this, $\lvert V(A_0) \rvert$ is maximal, and subject to these, $\lvert E(A_0) \rvert$ is minimal.
Note that $A_0-Z$ is the union of all components of $G-Z$ disjoint from $U$.
For each cut-vertex $v$ of $G-Z$ contained in $U$, let $(A_v,B_v)$ be the separation of $G$ with $V(A_v \cap B_v)=Z \cup \{v\}$ and $A_0 \cup U \subseteq B_v$, and subject to this, $\lvert V(A_v) \rvert$ is maximal, and subject to these, $\lvert E(A_v) \rvert$ is minimal.
Define $\L=\{(A_0,B_0),(A_v,B_v): v$ is a cut-vertex of $G-Z$ contained in $U\}$.
Then $\L$ is a location by the maximality of $V(A_0)$ and $V(A_v)$ and the minimality of $\lvert E(A_0) \rvert$ and $\lvert E(A_v) \rvert$.
Note that for every $(A,B) \in \L$, the order of $(A,B)$ is at most $\lvert Z \rvert+1 \leq \xi < t'<\theta'$, so either $(A,B) \in \T$ or $(B,A) \in \T$ by \cref{OddMinorFreeClaim3}.

\begin{claim}
\label{OddMinorFreeClaim5}
$\L \subseteq \T$.
\end{claim}

\begin{proof}
Suppose that $\L \not \subseteq \T$.
Let $(A,B) \in \L-\T$.
Then either $(A,B)=(A_0,B_0)$ and $(B_0,A_0) \in \T$, or there exists a cut-vertex $v$ of $G-Z$ contained in $U$ such that $(A,B)=(A_v,B_v)$ and $(B_v,A_v) \in \T$.
Since $t' \geq \lvert Z \rvert+2$, there exist vertices $u_1,u_2$ of $H$ such that $Q_1,Q_2$ are disjoint from $Z$, where $Q_1,Q_2$ are the branch sets of $\alpha$ corresponding to $u_1,u_2$, respectively.
By the definition of $U$, $U$ intersects both $Q_1$ and $Q_2$.
Since $Q_1 \cap Q_2=\emptyset$ and $U \subseteq V(B)$, one of $Q_1,Q_2$ is contained in $B-V(A)$. 
Since $\T$ controls $\alpha$ and $(B,A) \in \T$ has order less than $t'$, both $Q_1$ and $Q_2$ intersect $V(A)-V(B)$, a contradiction.
Hence $\L \subseteq \T$.
\end{proof}

Let $L'$ be a $(Z,\ell)$-growth of $L$, and let $Y_1' = \{v \in V(G): \lvert L'(v) \rvert=1\}$.
By \cref{Z growth}, $\lvert Y_1' \rvert \leq h(\lvert Y_1 \cup Z \rvert) \leq h(\eta+\xi)$ and for every $(A,B) \in \L$, we have $\lvert Y_1' \cap V(A) \rvert \leq h(\lvert A \cap B \rvert + \lvert Y_1 \cap V(A) \rvert) \leq h(4\theta')$.

\begin{claim}
\label{OddMinorFreeClaim6}
For every $v \in V(G)-Y_1'$, we have $L'(v) \cap (\{\ell\} \cup [s+3,2s+1]) \neq \emptyset$ and $L'(v)-(\{\ell\} \cup [s+3,2s+1]) \neq \emptyset$.
\end{claim}

\begin{proof}
By \cref{Z growth}, $L'$ is an $(s,Y_1',\ell,s-1)$-list-assignment of $G$.
So $L'$ is an $(s,s+1,Y_1')$-list-assignment of $G$.
By (L5), $\lvert L'(v) \rvert \geq s+2$ for every $v \in V(G)-Y_1'$.
Since $\lvert \{\ell\} \cup [s+3,2s+1] \rvert = s$, we have $L'(v) - (\{\ell\} \cup [s+3,2s+1]) \neq \emptyset$ for every $v \in V(G)-Y_1'$.
Since $\lvert [2s+1]- (\{\ell\} \cup [s+3,2s+1]) \rvert = s+1$, we have $L'(v) \cap (\{\ell\} \cup [s+3,2s+1]) \neq \emptyset$ for every $v \in V(G)-Y_1'$.
\end{proof}

Let $G' = G[(\bigcap_{(A,B) \in \L}V(B)) \cup Y_1']$.
Note that $G'=G[V(U) \cup Y_1']$.
Observe that $G'-Y_1' \subseteq U$, so $G'-Y_1'$ is bipartite.
Let $\{P,Q\}$ be a bipartition of $G'-Y_1'$.
Define an $L'|_{G'}$-coloring $c'$ of $G'$ as follows.
\begin{itemize}
	\item If $v \in Y_1'$, then let $c'(v)$  be the unique element of $L'(v)$.
	\item If $v \in P$, then let $c'(v)$  be an element of $L'(v) \cap (\{\ell\} \cup [s+3,2s+1])$.
	\item If $v \in Q$, then let $c'(v)$ be an element of $L'(v)-(\{\ell\} \cup [s+3,2s+1])$.
\end{itemize}
Note that $c'$ is well-defined by \cref{OddMinorFreeClaim6}.

\begin{claim}
\label{OddMinorFreeClaim7}
Every monochromatic component with respect to $c'$ contains at most $\eta_2$ vertices.
Furthermore, for every $x \in [s+3,2s+1]$, the set of vertices colored $x$ is a stable set in $G'$.
\end{claim}

\begin{proof}
Let $x \in [2s+1]$, and let $M$ be a monochromatic component with respect to $c'$ such that all vertices of $M$ are colored $x$.
If $V(M) \cap Y_1' = \emptyset$, then $V(M) \subseteq P$ or $V(M) \subseteq Q$, so $M$ consists of one vertex as $P$ and $Q$ are stable sets in $G'$.
So we may assume that $V(M) \cap Y_1' \neq \emptyset$.

Since $L'$ is an $(s,Y_1',\ell,s-1)$-list-assignment of $G$, by (R3), if $x \in \{\ell\} \cup [s+3,2s+1]$, then either $V(M) \cap Y_1'=\emptyset$ or $V(M) \subseteq Y_1'$.
Recall $V(M) \cap Y_1' \neq \emptyset$, so if $x \in \{\ell\} \cup [s+3,2s+1]$, then $V(M) \subseteq Y_1'$.
Hence, if $x \in [s+3,2s+1]$, then $V(M) \subseteq Y_1'$, so $M$ consists of one vertex by (R4); if $x=\ell$, then $V(M) \subseteq Y_1'$, so $M$ contains at most $\lvert Y_1' \rvert \leq h(\eta+\xi) \leq \eta_2$ vertices.

So we may assume that $x \in [s+2]-\{\ell\}$.
In particular, $V(M) \cap P = \emptyset$.
Since $L'$ is an $(s,Y_1',\ell,s-1)$-list-assignment, $L'$ is an $(s,s+1,Y_1')$-list-assignment, so $\lvert N_G(v) \cap Y_1' \rvert \geq s$ for every vertex $v \in V(M)-Y_1'$ with $N_G(v) \cap Y_1' \cap V(M) \neq \emptyset$ by (L3).
By \cref{BoundedGrowth}, $\lvert \{v \in V(M)-Y_1': N_G(v) \cap Y_1' \cap V(M) \neq \emptyset\} \rvert \leq f(\lvert Y_1' \rvert)$.
Note that every component of $M-Y_1'$ is contained in $G'[Q]$ which is a graph with no edge.
Since $V(M) \cap Y_1' \neq \emptyset$, every vertex in $V(M)-Y_1'$ has a neighbor in $Y_1' \cap V(M)$.
So $\{v \in V(M)-Y_1': N_G(v) \cap Y_1' \cap V(M) \neq \emptyset\} = V(M)-Y_1'$.
Hence $\lvert V(M) \rvert \leq \lvert V(M) \cap Y_1' \rvert + \lvert V(M)-Y_1' \rvert \leq \lvert Y_1' \rvert + f(\lvert Y_1' \rvert) \leq h(\eta+\xi) + f(h(\eta+\xi)) =\eta_2$.
\end{proof}

Let $Y_1'': = V(G')$.
Note that $Y_1'' \supseteq Y_1'$.
Let $L''$ be the following list-assignment of $G$:
	\begin{itemize}
		\item For every $v \in Y_1'$, let $L''(v):=L'(v)$.
		\item For every $v \in V(G')-Y_1'$, let $L''(v):=\{c'(v)\}$.
		\item For every $v \in N_G^{<s}(Y_1'')$, let $L''(v)$ be a subset of $L'(v)-\{L''(y): y \in Y_1'' \cap N_G(v)\}$ of size $\lvert L'(v) \rvert - \lvert N_G(v) \cap (Y_1''-Y_1') \rvert$ such that $\lvert (L'(v)-L''(v)) \cap [s+3,2s+1] \rvert = \lvert \{y \in N_G(v) \cap Y_1''-Y_1': L''(y) \in [s+3,2s+1]\} \rvert$ if possible.
		\item For every $v \in V(G)-(Y_1'' \cup N_G^{<s}(Y_1''))$, let $L''(v):=L'(v)$. 
	\end{itemize}

\begin{claim}
\label{OddMinorFreeClaim8}
The following hold:
	\begin{itemize}
		\item $L''$ is an $(s,s+1,Y_1'')$-list-assignment.
		\item For every $x \in [s+3,2s+1]$, $\{y \in Y_1'': x \in L''(y)\} = \{y \in Y_1': x \in L'(y)\} \cup \{y \in V(G')-Y_1': c'(y)=x\}$ is a stable set in $G$.
		\item For every $v \in N_G^{<s}(Y_1'') \cup (V(G)-N_G[Y_1''])$, we have $(s-1) - \lvert L''(v) \cap [s+3,2s+1] \rvert = \lvert \{y \in N_G(v) \cap Y_1'': L''(y) \subseteq [s+3,2s+1]\} \rvert$. 
		\item For every $(A,B) \in \L$, we have $\lvert Y_1'' \cap V(A) \rvert \leq h(4\theta')+1$.
	\end{itemize}
\end{claim}

\begin{proof}
Since $L'$ is an $(s,s+1,Y_1')$-list-assignment, $L''$ is an $(s,s+1,Y_1'')$-list-assignment.
And by \cref{OddMinorFreeClaim7}, for every $x \in [s+3,2s+1]$, 
\begin{align*}
\{y \in Y_1'': x \in L''(y)\} & = \{y \in Y_1': x \in L'(y)\} \cup \{y \in V(G')-Y_1': c'(y)=x\} \\
& = \{y \in V(G'): c'(y)=x\}
\end{align*} is a stable set in $G$.

Note that for every $v \in V(G)-Y_1''$, there exists $u \in U \cup \{0\}$ such that $v \in V(A_u)-V(B_u)$, so for every $y \in N_G(v) \cap V(G')-Y_1'$, $u \in U$ and $y$ must be $u$, and hence $\lvert N_G(v) \cap Y_1''-Y_1' \rvert \leq 1$.

For every $v \in N_G^{<s}(Y_1'')$, if $N_G(v) \cap Y_1'' = N_G(v) \cap Y_1'$, then $L''(v) \cap [s+3,2s+1]  = L'(v) \cap [s+3,2s+1]$; if $N_G(v) \cap Y_1'' \neq N_G(v) \cap Y_1'$, then $\lvert N_G(v) \cap Y_1' \rvert \leq s-2$, and by (L3) and (L4), we know $\lvert L'(v) \rvert = 2s+1-\lvert N_G(v) \cap Y_1' \rvert \geq s+3$, so $L'(v) \cap [s+2] \neq \emptyset \neq L'(v) \cap [s+3,2s+1]$, and since $\lvert N_G(v) \cap Y_1''-Y_1' \rvert \leq 1$, we know that $L''(v)$ can be chosen such that $\lvert (L'(v)-L''(v)) \cap [s+3,2s+1] \rvert = \lvert \{y \in N_G(v) \cap Y_1''-Y_1': L''(y) \in [s+3,2s+1]\} \rvert$.

Therefore, for every $v \in N_G^{<s}(Y_1'')$, 
\begin{align*}
	& (s-1) - \lvert L''(v) \cap [s+3,2s+1] \rvert\\ 
	= \;& (s-1) -\lvert L'(v) \cap [s+3,2s+1] \rvert  + \lvert (L'(v)-L''(v)) \cap [s+3,2s+1] \rvert\\ 
	= \;&  \lvert \{y \in N_G(v) \cap Y_1': L'(y) \subseteq [s+3,2s+1]\}\rvert + \lvert (L'(v)-L''(v)) \cap [s+3,2s+1] \rvert \\
	= \;&  \lvert \{y \in N_G(v) \cap Y_1': L'(y) \subseteq [s+3,2s+1]\}\rvert \\
	& \quad + \lvert \{y \in N_G(v) \cap Y_1''-Y_1': L''(y) \in [s+3,2s+1]\} \rvert \\
	= \;&  \lvert \{y \in N_G(v) \cap Y_1'': L''(y) \subseteq [s+3,2s+1]\} \rvert,
\end{align*} 
where the second equality follows from the fact that $L'$ satisfies (R5). 

For every $v \in V(G)-N_G[Y_1'']$, since $L''(v)=L'(v)$, by (R5),  
\begin{align*}
	(s-1)-\lvert L''(v) \cap [s+3,2s+1] \rvert  
	& = (s-1)- \lvert L'(v) \cap [s+3,2s+1] \rvert \\ 
	& = \lvert \{y \in N_G(v) \cap Y_1': L'(y) \subseteq [s+3,2s+1]\} \rvert \\
	& =0 = 
	\lvert \{y \in N_G(v) \cap Y_1'': L''(y) \subseteq [s+3,2s+1]\} \rvert.
\end{align*}
For every $(A,B) \in \L$, we have $\lvert Y_1'' \cap V(A) \rvert \leq \lvert Y_1' \cap V(A) \rvert + \lvert (Y_1''-Y_1') \cap V(A) \rvert \leq h(4\theta') + \lvert V(A \cap B)-Z \rvert \leq h(4\theta') + 1$.
\end{proof}

Let $L'''$ be a $(N_G^{\geq s}(Y_1''), \{\ell\} \cup [s+3,2s+1])$-progress of $L''$. 
Let $Y_1''':=\{v \in V(G): \lvert L'''(v) \rvert=1\}$.
By \cref{progress basic} and \cref{OddMinorFreeClaim8}, $L'''$ is an $(s,s+1,Y_1''')$-list-assignment of $G$ and satisfies (R3)-(R5).
Hence $L'''$ is an $(s,Y_1''',\ell,s-1)$-list-assignment.
Furthermore, for every $(A,B) \in \L$, since $V(A \cap B) \subseteq Y_1''$, we have $\lvert Y_1''' \cap V(A) \rvert \leq \lvert Y_1'' \cap V(A) \rvert + f(\lvert Y_1'' \cap V(A) \rvert)$ by \cref{BoundedGrowth}, so $\lvert Y_1''' \cap V(A) \rvert \leq h(4\theta')+1 + f(h(4\theta')+1) \leq \eta$ by 
\cref{OddMinorFreeClaim8}.

Let $L^*$ be an $(\emptyset,\ell)$-growth of $L'''$, and let $Y_1^*:=\{v \in V(G): \lvert L^*(v) \rvert=1\}$.
By \cref{Z growth}, $L^*$ is an $(s,Y_1^*,\ell,s-1)$-list-assignment of $G$ such that:		
	\begin{itemize}
		\item For every $L^*$-coloring of $G$, every monochromatic component intersecting $Y_1'''$ is contained in $G[Y_1^*]$. 
		\item For every $(A,B) \in \L$, we have $\lvert Y_1^* \cap V(A) \rvert \leq h(\lvert V(A \cap B) \rvert + \lvert Y_1''' \cap V(A) \rvert) \leq h(\theta'+h(4\theta')+1 + f(h(4\theta')+1)) \leq \eta_3 \leq \eta$.
	\end{itemize}

For every $(A,B) \in \L$, since $V(A \cap B) \subseteq Y_1^*$, we have $L^*|_{G[V(A)]}$ is an $(s,Y_1^* \cap V(A),\ell,s-1)$-list-assignment of $G[V(A)]$ such that $\lvert \{y \in V(A): \lvert L^*(y) \rvert=1\} \rvert = \lvert Y_1^* \cap V(A) \rvert \leq \eta_3 \leq \eta$.
For every $(A,B) \in \L$, since $(A,B) \in \T$, we have $\lvert V(A) \rvert < \lvert V(G) \rvert$.
Therefore, for every $(A,B) \in \L$, by the minimality of $G$, there exists an $(\eta,g)$-bounded $L^*|_{G[V(A)]}$-coloring $c_A^*$ of $G[V(A)]$ such that $\{v \in V(A): c_A^*(v)=x\}$ is a stable set in $G[V(A)]$ for every $x \in [s+3,2s+1]$. 

Since $\bigcap_{(A,B) \in \L}V(B) \subseteq V(G') \subseteq Y_1^*$ and $\L$ is a location, for every $v \in V(G)-Y_1^*$, there uniquely exists $(A_v,B_v) \in \L \subseteq \T$ such that $v \in V(A_v)-V(B_v)$. 
Let $c^*$ be the following function:
	\begin{itemize}
		\item For every $v \in Y_1^*$, let $c^*(v)$ be the unique element in $L^*(v)$.
		\item For every $v \in V(G)-Y_1^*$, let $c^*(v):=c^*_{A_v}(v)$.
	\end{itemize}
Clearly, $c^*$ is well-defined and is an $L^*$-coloring of $G$.

Suppose that there exists $x^* \in [s+3,2s+1]$ such that the set $\{v \in V(G): c^*(v)=x^*\}$ is not a stable set in $G$.
Then there exists an edge $e$ of $G$ such that both ends of $e$ are colored $x^*$ under $c^*$.
If there exists $(A,B) \in \L$ such that $e$ belongs to $A$, then $\{v \in V(A): c_A^*(v)=x^*\}$ is not a stable set in $G[V(A)]$, a contradiction.
So $e \in \bigcap_{(A,B) \in \L}B \subseteq G'$, and hence $\{v \in V(G'): c'(v)=x^*\}$ is not a stable set in $G'$, contradicting \cref{OddMinorFreeClaim7}.

This shows that for every $x \in [s+3,2s+1]$, the set $\{v \in V(G): c^*(v)=x\}$ is a stable set in $G$.
Hence $c^*$ is not $(\eta,g)$-bounded.

Since $L^*(v) \subseteq L'(v)$ for every $v \in V(G)$, $c^*$ is an $L'$-coloring of $G$.
Since $L'$ is an $(Z,\ell)$-growth of $L$, by \cref{Z growth}, every monochromatic component with respect to $c^*$ intersecting $Y_1 \cup Z$ is contained in $G[Y_1']$.
So the union of monochromatic components with respect to $c^*$ intersecting $Y_1 \cup Z$ is contained in $G[Y_1']$ and hence contains at most $\lvert Y_1' \rvert \leq  h(\eta + \xi) \leq \eta_2 \leq g(0) \leq g(\lvert Y_1 \rvert) \leq g(\eta)$. 
In particular, the union of monochromatic components with respect to $c^*$ intersecting $Y_1$ contains at most $\lvert Y_1 \rvert^2g(\lvert Y_1 \rvert)$ vertices.

Hence there exists a monochromatic component $M$ with respect to $c^*$ disjoint from $Y_1 \cup Z$ such that $M$ contains more than $\eta^2g(\eta)$ vertices.
Suppose that $V(M) \cap V(G') \neq \emptyset$.
Since $V(M) \cap Z=\emptyset$ and $\lvert V(A \cap B)-Z \rvert \leq 1$ for every $(A,B) \in \L$, $G[V(M) \cap V(G')]$ is connected.
Since $c^*(v)=c'(v)$ for every $v \in V(G')$, $M \cap G'$ is a monochromatic component with respect to $c'$, so $\lvert V(M \cap G') \rvert \leq \eta_2$ by \cref{OddMinorFreeClaim7}. 
For every $(A,B) \in \L$, if $V(M)-V(B) \neq \emptyset$, then $V(M) \cap V(A \cap B)-Z \neq \emptyset$ since $V(M)$ is disjoint from $Z$.
By the definition of $\L$, if $(A_1,B_1),(A_2,B_2)$ are distinct members of $\L$, then $V(A_1 \cap B_1)-Z \neq V(A_2 \cap B_2)-Z$.
Let $\L'=\{(A,B) \in \L: V(M) \cap V(A \cap B)-Z \neq \emptyset\}$.
So $\lvert \L' \rvert \leq \lvert V(M) \cap V(G') \rvert \leq \eta_2$.
For every $(A,B) \in \L'$, since $V(M)$ intersects $V(A \cap B)-Z \subseteq Y_1''' \cap V(A)$, $G[V(M) \cap V(A)]$ is a monochromatic component with respect to $c_A^*$ intersecting $Y_1''' \cap V(A) \subseteq Y_1^* \cap V(A)$, so by \cref{Z growth}, $G[V(M) \cap V(A)]$ is contained in $G[Y_1^* \cap V(A)]$ and hence contains at most $\lvert Y_1^* \cap V(A) \rvert \leq \eta_3$ vertices. 
Therefore, $\lvert V(M) \rvert \leq \lvert V(M) \cap V(G') \rvert \cdot \eta_3 \leq \eta_2\eta_3 \leq g(0) \leq \eta^2g(\eta)$, a contradiction.

Hence $V(M) \cap V(G')=\emptyset$.
In particular, $V(M)-V(B) \neq \emptyset$ for some $(A,B) \in \L$.
For every $(A',B') \in \L$, since $V(A' \cap B') \subseteq \bigcap_{(A,B)\in \L}V(B) \subseteq V(G')$, $V(M)$ is disjoint from $V(A' \cap B')$.
So if there exist $(A_1,B_1),(A_2,B_2) \in \L$ such that $V(M)-V(B_1) \neq \emptyset \neq V(M)-V(B_2)$, then since $\L$ is a location and $M$ is connected, $V(M)$ intersects $V(A_1 \cap B_1)$, a contradiction. 
So there uniquely exists $(A^*,B^*) \in \L$ such that $V(M)-V(B^*) \neq \emptyset$.
Since $V(M)$ is disjoint from $V(A^* \cap B^*)$, we have $V(M) \subseteq V(A^*)-V(B^*)$.
Hence $M$ is a monochromatic component with respect to $c^*_{A^*}$.
Since $c^*_{A^*}$ is an $(\eta,g)$-bounded $L^*|_{G[V(A^*)]}$-coloring, $M$ contains at most $\eta^2g(\eta)$ vertices, a contradiction.
This proves the lemma.
\end{proof}

\section{About the Gerards-Seymour Conjecture}
\label{GerardsSeymour}

This section proves \cref{OddMinor}. The proof depends on the following result of \citet{KO19}. 
For graphs $G$ and $H$, let $G+H$ be the graph obtained from the disjoint union of $G$ and $H$ by adding all edges with one end in $V(G)$ and one end in $V(H)$.
Let $K^*_{s,t}:= K_s + I_t$, where $I_t$ is the graph on $t$ vertices with no edges. 

\begin{lemma}[Corollary of {\cite[Lemma 5.1]{KO19}}] 
\label{k-o odd minor}
Let $t,d\in\mathbb{N}$ with $d\geq 3$. 
If there exists a positive integer $\eta \geq 4t-3$ such that every odd $K_{t+1}$-minor-free graph containing no bipartite 
$K^*_{2t,t+1}$-subdivision has a $d$-coloring with clustering $\eta$, then every odd $K_{t+1}$-minor-free graph has a $(d+4t-3)$-coloring with clustering $\eta$. 
\end{lemma}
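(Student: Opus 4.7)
My plan is to derive this statement from Kang--Oum's Lemma~5.1 by a short ``apex-peeling'' reduction. Given an odd $K_{t+1}$-minor-free graph $G$, the expected content of that lemma is a structural decomposition producing $4t-3$ stable sets $S_1,\dots,S_{4t-3}$ of $G$, whose union $S:=S_1\cup\dots\cup S_{4t-3}$ has the property that $G-S$ inherits odd $K_{t+1}$-minor-freeness and, crucially, contains no bipartite $K^*_{2t,t+1}$-subdivision. The ``$4t-3$'' in the conclusion precisely matches the number of stable sets that must be peeled off to destroy every bipartite $K^*_{2t,t+1}$-subdivision in an odd $K_{t+1}$-minor-free graph; this is the nontrivial content of Kang and Oum's lemma.

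With such a decomposition in hand, the coloring construction is straightforward. Color each $S_i$ with a private color $i\in[4t-3]$, so this partial coloring of $G[S]$ is proper and hence has clustering $1$. Then apply the hypothesis to $G-S$ (which is odd $K_{t+1}$-minor-free and bipartite $K^*_{2t,t+1}$-subdivision-free) to obtain a $d$-coloring of $G-S$ with clustering $\eta$, using $d$ further colors disjoint from $[4t-3]$. The combined coloring uses $d+4t-3$ colors. Because the two palettes are disjoint, every monochromatic component lies either entirely in some $G[S_i]$ (hence is a single vertex) or entirely in $G-S$ (hence has at most $\eta$ vertices). Thus the clustering is at most $\eta$, giving the desired $(d+4t-3)$-coloring.

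The main obstacle, and essentially the entire content of the proof, is extracting the stable-set decomposition in the precise form stated above from Kang--Oum's Lemma~5.1. Their lemma is phrased in terms of successive separations or subdivision-based reductions in a minimum counterexample, and some bookkeeping is required to check that iterating the basic reduction $4t-3$ times indeed produces the $4t-3$ stable sets. The condition $\eta\geq 4t-3$ in the hypothesis is what permits a slightly weaker formulation (for instance, a single set $S$ with components of size at most $4t-3$ rather than a decomposition into stable sets) to also suffice, because one could then assign one color to all of $S$ and still fit within the clustering bound. Once the decomposition is in place the combination step is mechanical, so the only nontrivial work is invoking Kang--Oum correctly.
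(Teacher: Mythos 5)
The paper's proof of this lemma is a direct black-box application of Kang--Oum's Lemma~5.1, which (as the paper's usage makes clear) is an abstract transfer statement of the form: \emph{for any class $\mathcal{F}$ of graphs closed under isomorphism and disjoint union that contains all graphs on at most $4t-3$ vertices, if every odd $K_{t+1}$-minor-free graph with no bipartite $K^*_{2t,t+1}$-subdivision has a $d$-coloring in which every color class induces a graph in $\mathcal{F}$, then every odd $K_{t+1}$-minor-free graph has a $(d+4t-3)$-coloring in which every color class induces a graph in $\mathcal{F}$.} The paper's entire proof is to instantiate $\mathcal{F}$ as the class of graphs whose every component has at most $\eta$ vertices, verify that this $\mathcal{F}$ is closed under isomorphism and disjoint union, and verify---using the hypothesis $\eta\geq 4t-3$---that $\mathcal{F}$ contains all graphs on at most $4t-3$ vertices. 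No structural decomposition is exposed or required.

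Your proposal misidentifies the content of the cited lemma. You posit that Lemma~5.1 produces, for an arbitrary odd $K_{t+1}$-minor-free graph $G$, a family of $4t-3$ stable sets $S_1,\dots,S_{4t-3}$ whose union $S$ satisfies that $G-S$ has no bipartite $K^*_{2t,t+1}$-subdivision. This one-shot ``apex-peeling'' decomposition is not what the lemma says, and you have not established it independently. Kang--Oum's argument is a minimal-counterexample/inductive reduction in which the freed-up colors arise dynamically across the recursion, not from a static global decomposition; packaging its output as $4t-3$ stable sets to be peeled at once is an additional structural claim you would have to prove from scratch, and it is by no means clear it holds. Your fallback interpretation (a single set $S$ all of whose components have at most $4t-3$ vertices, colored with one extra color) is also not what the lemma asserts and would yield $d+1$ colors rather than $d+4t-3$, a mismatch that signals the mechanism is different from what you imagine. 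You do correctly identify the role of the hypothesis $\eta\geq 4t-3$---it is needed so that the relevant class $\mathcal{F}$ contains all graphs on at most $4t-3$ vertices---but the rest of the proof should simply quote the lemma in its actual form and check these closure properties, rather than attempting to unpack its internals.
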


\begin{proof}
Let $\F$ be the set of all graphs whose every component contains at most $\eta$ vertices.
So $\F$ is a class of graphs closed under isomorphism and taking disjoint union, and $\F$ contains all graphs on at most $4t-3$ vertices.
Since every graph with no odd $K_{t+1}$-minor and no bipartite 
$K^*_{2t,t+1}$-subdivision has a $d$-coloring with clustering $\eta$, by {\cite[Lemma 5.1]{KO19}}, every odd $K_{t+1}$-minor-free graph has a $(d+4t-3)$-coloring with clustering $\eta$.
\end{proof}

We now prove \cref{OddMinor}.

\begin{theorem}
For every $s\in\mathbb{N}$, there exists $\eta\in\mathbb{N}$ such that every odd $K_{s+1}$-minor-free graph has an $(8s-4)$-coloring with clustering $\eta$.
\end{theorem}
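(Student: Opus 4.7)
The plan is to apply Lemma~\ref{k-o odd minor} with $t := s$. Since that lemma increases the color count by $4s - 3$, reaching the target $8s - 4$ reduces to exhibiting an $\eta \in \mathbb{N}$ with $\eta \geq 4s - 3$ such that every odd $K_{s+1}$-minor-free graph $G$ with no bipartite $K^*_{2s, s+1}$-subdivision admits a $(4s - 1)$-coloring with clustering $\eta$. The target $4s - 1 = 2(2s-1) + 1$ is engineered to match the output of \cref{oddminorbasic} under the substitution $s \mapsto 2s - 1$, $H := K_{s+1}$, $t \mapsto t^*(s)$: invoking that theorem yields a $(4s - 1)$-coloring with clustering of any graph with no odd $K_{s+1}$-minor and no $K_{2s-1, t^*}$-subgraph, and by enlarging $t^*$ the associated clustering can be arranged to satisfy $\eta \geq 4s - 3$.

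Thus the task reduces to the following forbidden-subgraph claim: every odd $K_{s+1}$-minor-free graph $G$ with no bipartite $K^*_{2s, s+1}$-subdivision contains no $K_{2s-1, t^*}$-subgraph (for $t^* = t^*(s)$ sufficiently large). I plan to argue this by the contrapositive. Given a $K_{2s-1, t^*}$-subgraph of $G$ with sides $A, B$ where $\lvert A \rvert = 2s - 1$ and $\lvert B \rvert = t^*$, I construct a bipartite subdivision of $K^*_{2s, s+1}$ inside $G$. The clean base case handles $K_{2s, m}$-subgraphs with $m \geq \binom{2s}{2} + s + 1$: take $A \cup \{v\}$ as the $2s$ central branch vertices (where $v \in V(G) \setminus A$ is an additional vertex sharing at least $\binom{2s}{2} + s + 1$ neighbors with $A$ in $B$), take $s + 1$ of the common $B$-neighbors as the outer branch vertices (each directly adjacent to every central vertex), and subdivide each of the $\binom{2s}{2}$ missing central clique edges by a length-two path through a distinct $B$-vertex; the resulting subdivision lies inside the bipartite subgraph $G[A \cup \{v\} \cup B]$ and is therefore bipartite.

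The main obstacle will be handling the case in which no such enriching vertex $v$ can be located: no vertex of $V(G) \setminus A$ has enough common $B$-neighbors to supply the missing $2s$-th central vertex. In that regime I plan to pigeonhole paths in $G \setminus A$ between pairs of $B$-vertices by parity, and to exploit the odd $K_{s+1}$-minor-freeness to either unlock such a $v$ or to route the missing central clique edges through auxiliary paths while keeping every cycle of the resulting subdivision of even length. If neither possibility obtains, the dense local structure around the given $K_{2s-1, t^*}$-subgraph should force an odd $K_{s+1}$-minor, contradicting the hypothesis. The parity bookkeeping in this step is the most delicate part of the argument.
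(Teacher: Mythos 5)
Your high-level plan is the same as the paper's: apply \cref{k-o odd minor} with $t=s$, reducing to showing that odd $K_{s+1}$-minor-free graphs with no bipartite $K^*_{2s,s+1}$-subdivision are $(4s-1)$-colorable with bounded clustering, and then invoke \cref{oddminorbasic} with its first parameter set to $2s-1$. The paper, however, does not prove the bridging claim you isolate; it simply asserts it in one sentence.

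The trouble is that the bridging claim you state---that every odd $K_{s+1}$-minor-free graph with no bipartite $K^*_{2s,s+1}$-subdivision contains no $K_{2s-1,t^*}$-subgraph---is false for every $s\geq 2$, no matter how large $t^*$ is. Take $G=K_{2s-1,t^*}$ itself. It is bipartite, so (since an odd $K_3$-minor is equivalent to an odd cycle, and $K_3\subseteq K_{s+1}$) $G$ has no odd $K_{s+1}$-minor. It manifestly contains a $K_{2s-1,t^*}$-subgraph. And it contains no subdivision of $K^*_{2s,s+1}$ at all, bipartite or not: every branch vertex of such a subdivision must have degree at least $\delta(K^*_{2s,s+1})=2s$ in the host graph, and there are $3s+1$ branch vertices, but $K_{2s-1,t^*}$ has only $2s-1$ vertices of degree $\geq 2s$. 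Consequently the contrapositive you propose to prove (a $K_{2s-1,t^*}$-subgraph forces a bipartite $K^*_{2s,s+1}$-subdivision or an odd $K_{s+1}$-minor) cannot be true, and the ``hard case'' of your argument---where no enriching vertex $v$ exists---is not a technical obstacle to be overcome by parity bookkeeping; it is a genuine counterexample. Your ``clean base case'' is sound only when you can already point to $2s$ vertices of $G$ that each have $\Omega(s^2)$ common $B$-neighbors, i.e.\ essentially when $G$ has a $K_{2s,m}$-subgraph; that corresponds to applying \cref{oddminorbasic} with the substitution $s\mapsto 2s$ rather than $s\mapsto 2s-1$, which would give $4s+1$ colors in the inner step and $8s-2$ overall, not $8s-4$. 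If you want to keep the $2s-1$ substitution, you must instead directly argue that the graphs your forbidden-subgraph claim fails on (odd $K_{s+1}$-minor-free, no bipartite $K^*_{2s,s+1}$-subdivision, yet containing a large $K_{2s-1,t^*}$) are still $(4s-1)$-colorable with bounded clustering for some other reason---and the paper's text does not supply such a reason either.
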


\begin{proof}
By \cref{oddminorbasic}, there exists $\eta$ such that every odd $K_{s+1}$-minor-free graph with no $K_{2s-1,{2s-1 \choose 2}+s+2}$ subgraph has a $(2(2s-1)+1)$-coloring with clustering $\eta$.
So every odd $K_{s+1}$-minor-free graph with no bipartite $K^*_{2s,s+1}$-subdivision
has a $(4s-1)$-coloring with clustering $\eta$. 
We may assume that $\eta \geq 4s-3$.
The theorem now follows from \cref{k-o odd minor}.
\end{proof}

\subsubsection*{Acknowledgements} 
Many thanks to the referees for their careful reading and helpful suggestions, especially to the referee who found a subtle error in the original submission.

  \let\oldthebibliography=\thebibliography
  \let\endoldthebibliography=\endthebibliography
  \renewenvironment{thebibliography}[1]{%
    \begin{oldthebibliography}{#1}%
      \setlength{\parskip}{0.15ex}%
      \setlength{\itemsep}{0.15ex}%
  }{\end{oldthebibliography}}


\def\soft#1{\leavevmode\setbox0=\hbox{h}\dimen7=\ht0\advance \dimen7
	by-1ex\relax\if t#1\relax\rlap{\raise.6\dimen7
		\hbox{\kern.3ex\char'47}}#1\relax\else\if T#1\relax
	\rlap{\raise.5\dimen7\hbox{\kern1.3ex\char'47}}#1\relax \else\if
	d#1\relax\rlap{\raise.5\dimen7\hbox{\kern.9ex \char'47}}#1\relax\else\if
	D#1\relax\rlap{\raise.5\dimen7 \hbox{\kern1.4ex\char'47}}#1\relax\else\if
	l#1\relax \rlap{\raise.5\dimen7\hbox{\kern.4ex\char'47}}#1\relax \else\if
	L#1\relax\rlap{\raise.5\dimen7\hbox{\kern.7ex
			\char'47}}#1\relax\else\message{accent \string\soft \space #1 not
		defined!}#1\relax\fi\fi\fi\fi\fi\fi}

\end{document}